\numberwithin{equation}{section}
\theoremstyle{definition}
\newtheorem{Definition}{Definition}[section]
\newtheorem{Example}[Definition]{Example}
\newtheorem{Remark}[Definition]{Remark}
\theoremstyle{plain}
\newtheorem{Theorem}[Definition]{Theorem}
\newtheorem{Proposition}[Definition]{Proposition}
\newtheorem{Corollary}[Definition]{Corollary}
\newtheorem{Lemma}[Definition]{Lemma}
\newcommand{\al}{\alpha}
\newcommand{\Ga}{\Gamma}
\newcommand{\la}{\lambda}
\newcommand{\si}{\sigma}
\newcommand{\om}{\omega}
\newcommand{\N}{\mathbb{N}}
\newcommand{\Z}{\mathbb{Z}}
\newcommand{\C}{\mathbb{C}}
\newcommand{\K}{\Bbbk}
\newcommand{\Fgl}{\mathfrak{gl}}
\newcommand{\Fsl}{\mathfrak{sl}}
\newcommand{\Fsp}{\mathfrak{sp}}
\newcommand{\Fg}{\mathfrak{g}}
\newcommand{\Fm}{\mathfrak{m}}
\newcommand{\mf}{\mathfrak{m}}
\newcommand{\scA}{\mathscr{A}}
\newcommand{\op}{\operatorname}
\DeclareMathOperator{\Aut}{Aut}
\DeclareMathOperator{\Ext}{Ext}
\DeclareMathOperator{\Id}{Id}
\renewcommand{\Im}{\operatorname{Im}}
\DeclareMathOperator{\MaxSpec}{MaxSpec}
\DeclareMathOperator{\Supp}{Supp}
\renewcommand{\hat}{\widehat}
\renewcommand{\mod}[1]{\;\text{(mod $#1$)}}
\newcommand{\ronto}{\twoheadrightarrow}
\renewcommand{\bar}{\overline}
\title[Grothendieck rings of towers of GWAs for finite orbits]{Grothendieck rings of towers of generalized Weyl algebras in the finite orbit case}
\author{Jonas T. Hartwig \and Daniele Rosso}
\date{\today}
\address{Department of Mathematics, Iowa State University, Ames, IA-50011, USA}
\email{jth@iastate.edu}
\urladdr{http://jthartwig.net}
\address{Department of Mathematics and Actuarial Science, Indiana University Northwest, Gary, IN-46408, USA}
\email{drosso@iu.edu}
\urladdr{https://drosso.pages.iu.edu/}
\begin{document}
\maketitle
\begin{abstract}
Previously we showed that the tensor product of a weight module over a generalized Weyl algebra (GWA) with a weight module over another GWA is a weight module over a third GWA. In this paper we compute tensor products of simple and indecomposable weight modules over generalized Weyl algebras supported on a finite orbit.
This allows us to give a complete presentation by generators and relations of the Grothendieck ring of the categories of weight modules over a tower of generalized Weyl algebras in this setting. We also obtain partial results about the split Grothendieck ring.
We described the case of infinite orbits in previous work.
\end{abstract}

\section{Introduction}

The class of noncommutative rings known as \emph{generalized Weyl algebras (GWAs)} was defined by Bavula \cite{Bavula1991}. 
They are denoted $R(\si,t)$, where $R$ is a base ring, $\si$ is an automorphism of $R$, and $t$ is an element in the center of $R$. On the one hand, this class contains many important algebras from representation theory such as $U(\Fg)$ and $U_q(\Fg)$ for $\Fg=\Fgl_2,\Fsl_2,\Fsl_3^+$, as well as the first Weyl algebra, down-up algebras \cite{BenkartRoby1998}, and many others. On the other hand, questions may be answered quantitatively for these algebras, from ring theoretical properties (global dimension, primitive ideals, automorphism groups, etc) to representation theory problems (classifying simple modules \cite{Bavula1992,BavulaVanOystaeyen2004}, classifying and computing explicit structure constants for all indecomposable weight modules \cite{DGO}).
They also appeared from a different perspective in \cite{Rosenberg1995}.
There is also a rank $n$ generalization of GWAs \cite{Bavula1992}, and a further ``more noncommutative'' generalization of the higher rank GWAs called \emph{twisted generalized Weyl algebras} (TGWAs) \cite{MazorchukTurowska1999}.

In another direction, GWAs can also be considered as special cases of Bell-Rogalski algebras, as explained in \cite{GadRosWon2022}. This family of rings was introduced in \cite{BellRog2016} with an aim to classify $\mathbb{Z}$-graded simple rings that are birationally commutative, and the categories of their graded modules should have interesting connections to noncommutative geometry, which gives further motivation for studying these algebras.
In noncommutative geometry, noncommutative rings stand in for (commutative) algebras of functions on varieties: for example noncommutative Kleinian singularities of type $A$ (\cite{Hodges1993}) are examples of GWAs.

In \cite{HR} we found a new interesting structure for TGWAs: the tensor product over the base ring $R$ of two weight modules for two (potentially different) TGWAs is naturally a weight module for another TGWA. In the simplest non-trivial case, the tensor product of two modules over the first Weyl algebra becomes a module over $U(\Fsl_2)$, and by varying the presentation of the Weyl algebra as a GWA, any Verma module for $\Fsl_2$ can be obtained this way (see \cite[Example 3.10]{HR}). We then have a tensor structure on the direct sum of categories of weight modules, where the sum ranges over a family (or ``tower") of TGWAs. A natural way to try to understand this new tensor category is to describe its Grothendieck ring.

The category of weight modules over a (T)GWA breaks into a direct sum over orbits in the maximal spectrum of $R$ with respect to the action of a free abelian group. (Usually $R$ is assumed commutative when studying weight modules.)
By the orbit-stabilizer theorem, the orbit is isomorphic, as a $\Z^n$-set, to $\Z^n/H$ where $H$ is the stabilizer of a point. 
Thus, in rank $n=1$, there are two cases: $H=\{0\}$ which leads to an infinite orbit, and $H\neq \{0\}$, which corresponds to a finite orbit. As an example, for weight modules over $U(\Fsl_2)$ realized as a GWA, either all orbits are finite (in positive characteristic) or all orbits are infinite (in characteristic zero). Similarly, for $U_q(\Fsl_2)$ the size of the orbit depends on whether $q$ is a root of unity or not.
Thus, the infinite orbit case is easier to study (for example, there are no non-trivial self-extensions between simple weight modules in this case).

In \cite{HR} we gave a complete presentation of the Grothendieck ring and split Grothendieck ring of the categories of weight modules for a tower of TGWAs of rank $1$ with support in a fixed infinite orbit. In this paper we address the finite orbit case.

Our main result is Theorem \ref{thm:groth-ring}, giving a presentation by generators and relations of $\mathscr{A}(\omega,\mathsf{M})$, which is the Grothendieck ring of the direct sum of categories of weight modules over a tower of GWAs, with support in a finite orbit of order $p\in\Z_{>0}$. Notice that one important difference from the infinite orbit case is that here in the finite orbit case we need to include some generators in degree $2$, while in \cite[Theorem 4.6]{HR} the Grothendieck ring was generated in degree $1$.

The other results are about the split Grothendieck ring. Since that is much more complicated we are only able to obtain partial results, by considering smaller towers of GWAs. In Theorem \ref{thm:nobr-split-ring} we give a presentation of the split Grothendieck ring when we restrict to a single parameter $t=1$, which shows a connection between the representation theories of $\Fsl_2$ and of the algebra of skew Laurent polynomials. In Theorem \ref{thm:split-quotient} we describe the quotient $\mathscr{A}^{\mathrm{split}}(\omega,\mathsf{M}_0)/\mathscr{I}(\om,\mathsf{M}_0)$, where the parameters $t$ are taken to be in the monoid $\mathsf{M}_0$ consisting of powers of a single linear polynomial, and the ideal $\mathscr{I}(\om,\mathsf{M}_0)$ is spanned in a natural way by one of the two families of indecomposable weight modules. We also show in Theorem \ref{thm:section} that, in the case of the monoid $\mathsf{M}_0$, the natural surjective map $\mathscr{A}^{\mathrm{split}}(\omega,\mathsf{M}_0)\ronto\mathscr{A}(\omega,\mathsf{M}_0)$ has a section which is an algebra map (equivalently, tensor products of semisimple weight modules over GWAs with $t\in\mathsf{M}_0$ are semisimple).

The structure of this paper is as follows. 

\begin{itemize}
    \item In Section \ref{sec:notation} we recall some known results that we will need: we first review the definitions of the tensor product of weight modules and of the (split) Grothendieck rings, from \cite{HR}, then we describe the simple and indecomposable weight modules over GWAs by adapting the classification from \cite{DGO}. We also discuss a connection to quiver algebras.
    \item In Section \ref{sec:definitions}  we compute tensor products over the base ring $R$ of the simple and indecomposable weight modules for a GWA, supported on a finite orbit.
    \item In Section \ref{sec:Groth-ring} we use the results from the previous section to prove the main result, Theorem \ref{thm:groth-ring}.
    \item In Section \ref{sec:split-groth-ring} we obtain some partial results about split Grothendieck rings, in particular the main result of this section is Theorem \ref{thm:split-quotient}.
    \item In Section \ref{sec:graph-mod} we give an alternative way to define some of the weight modules and compute their tensor products, using certain directed graphs.
\end{itemize}

\subsection{Future Directions}
The tensor product operation defined in \cite{HR} is very general, so there are many open questions about what these Grothendieck rings look like in several specific cases that have not been studied yet. For example, in \cite{HR} we computed the Grothendieck group for a tower or rank $2$ algebras with a relation between the automorphisms which makes for a cylindrical orbit, however we were not able to say anything about the split Grothendieck ring because we would first need to classify the indecomposable weight modules in that setting. Similarly, one could ask about the case of a tower of rank $2$ TGWAs with two finite order automorphisms (torus orbit). In that case not much is known about the weight modules, not even the simple ones, so significant work is needed before describing those Grothendieck rings.

In a different direction, one may ask if all simple completely pointed weight modules over $U(\Fgl_n)$ and $U(\Fsp_{2n})$
(studied from the perspective of TGWAs in \cite{HarSer})
can be realized as tensor products of weight modules over Weyl algebras and Laurent polynomials rings.

\subsection*{Acknowledgments}

D. R. was supported in this work by a Summer Faculty Fellowship for research of Indiana University Northwest.
J. H. was supported by Simons Foundation Collaboration Grant \#637600.

\section{Background and Notation}\label{sec:notation}

\subsection{Generalized Weyl algebras and Grothendieck rings}Here we recall some necessary definitions and results from \cite{HR}, although we simplify them for our current setting.

\begin{Definition}[\cite{Bavula1992}]\label{def:tgwa} 
Let $R$ be a commutative ring, $\si\in \Aut(R)$ an automorphism of $R$, $t\in R$ a regular element. The \emph{generalized Weyl algebra (GWA)} of rank (or degree) one, denoted $A(R,\si,t)$, is the associative algebra obtained from $R$ by adjoining $2$ new generators
$X, Y$ that are subject to the following relations, for all $r\in R$:
\begin{equation}\label{eq:TGWA-Rels}
X  r = \si(r) X,\qquad
Y  r = \si^{-1}(r) Y,\qquad 
Y X =t, \qquad 
X Y =\si(t).
\end{equation}  
\end{Definition}

We now fix $R$ and $\si$, and we allow $t\in R_{\operatorname{reg}}$ to vary, so we denote
$A(t):=A(R,\si,t)$. Let $A(t)\otimes_R A(t')$ denote the tensor product of $A(t)$ and $A(t')$ viewed as left modules over the commutative ring $R$. Explicitly:
\begin{equation}\label{eq:Rtensor}
A(t)\otimes_R A(t') := \big(A(t)\otimes_\Z A(t')\big)/\big((ra)\otimes b - a\otimes (rb)\mid r\in R,~ a\in A(t),~ b\in A(t')\big).
\end{equation}

\begin{Theorem}[See {\cite[Thm 3.3]{HR}} for the more general statement]\label{thm:coprod}
\begin{enumerate}[{\rm (a)}]
\item For any two regular elements $t, t'\in R$, there is a homomorphism of $R$-rings
\begin{equation}\label{eq:tgwa-coprod}
\Delta_{t,t'}:A(tt')\to A(t)\otimes_R A(t')
\end{equation}
which is uniquely determined by
\begin{align*} r\mapsto & r\otimes 1=1\otimes r\qquad \forall r\in R, \\
X(tt')\mapsto & X(t)\otimes X(t'), \\ 
Y(tt')\mapsto & Y(t)\otimes Y(t'),
\end{align*} 
where $X(t), Y(t)$ denote the generators in $A(t)$.
\item For any three regular elements $t, t', t''\in R$, the following coassociative law holds:
\begin{equation}
\big(\Delta_{t,t'}\otimes \Id_{A(t'')}\big)\circ\Delta_{tt',t''} = 
\big(\Id_{A(t)}\otimes \Delta_{t,t't''}\big)\circ\Delta_{t,t't''}
\end{equation}
\item For any two regular elements $t, t'\in R$, the following cocommutative law holds:
\begin{equation}
P\circ \Delta_{t,t'} = \Delta_{t',t},
\end{equation}
where $P(x\otimes y)=y\otimes x$.
\end{enumerate}
\end{Theorem}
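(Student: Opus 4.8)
The plan is to use the universal property behind Definition~\ref{def:tgwa}: for a regular element $s\in R$, the algebra $A(s)$ is the $R$-ring freely generated by symbols $X(s),Y(s)$ modulo the four relations of \eqref{eq:TGWA-Rels}. So to produce the $R$-ring homomorphism $\Delta_{t,t'}$ of part~(a) it suffices to check that the proposed images $r\otimes1$ (for $r\in R$), $\xi:=X(t)\otimes X(t')$, and $\eta:=Y(t)\otimes Y(t')$ satisfy those four relations inside $A(t)\otimes_R A(t')$, equipped with the $R$-ring structure of \cite[Thm 3.3]{HR}, under which $(a\otimes b)(a'\otimes b')=aa'\otimes bb'$ on the elements involved. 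The two ``commutation'' relations follow directly from $X(t)r=\si(r)X(t)$ and $Y(t)r=\si^{-1}(r)Y(t)$ in the two factors: $\xi\,(r\otimes1)=X(t)r\otimes X(t')=\si(r)X(t)\otimes X(t')=(\si(r)\otimes1)\,\xi$, and likewise $\eta\,(r\otimes1)=(\si^{-1}(r)\otimes1)\,\eta$. For the other two, $\eta\xi=Y(t)X(t)\otimes Y(t')X(t')=t\otimes t'$ and $\xi\eta=X(t)Y(t)\otimes X(t')Y(t')=\si(t)\otimes\si(t')$; since $t,t'\in R$, the $R$-balancing of the tensor product gives $t\otimes t'=1\otimes tt'$ and $\si(t)\otimes\si(t')=1\otimes\si(t)\si(t')=1\otimes\si(tt')$, so $\eta\xi$ and $\xi\eta$ are precisely the images of $tt'$ and $\si(tt')$ under $r\mapsto1\otimes r$. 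Hence the universal property yields $\Delta_{t,t'}$; it fixes $R$ by construction, so it is a homomorphism of $R$-rings, and it is unique because $R$, $X(tt')$, $Y(tt')$ generate $A(tt')$.

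For part~(b), both $\Delta_{t,t'}\otimes\Id_{A(t'')}$ and $\Id_{A(t)}\otimes\Delta_{t',t''}$ are homomorphisms of $R$-rings (tensoring an $R$-ring homomorphism with the identity of another $R$-ring again gives one, and the triple tensor product $A(t)\otimes_R A(t')\otimes_R A(t'')$ is associative), so both composites in the coassociative law are homomorphisms of $R$-rings from $A(tt't'')$ — note $tt'\cdot t''=t\cdot t't''$ since $R$ is commutative — to $A(t)\otimes_R A(t')\otimes_R A(t'')$. It therefore suffices to check that they agree on the generators $R$, $X(tt't'')$, $Y(tt't'')$: on $R$ both give $r\mapsto r\otimes1\otimes1$, and applying the definition of $\Delta$ twice shows both send $X(tt't'')\mapsto X(t)\otimes X(t')\otimes X(t'')$ and $Y(tt't'')\mapsto Y(t)\otimes Y(t')\otimes Y(t'')$. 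That one computation is all of part~(b).

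For part~(c), the flip $P(x\otimes y)=y\otimes x$ is an isomorphism of $R$-rings $A(t)\otimes_R A(t')\to A(t')\otimes_R A(t)$ (componentwise multiplication is automatically preserved by $P$), and its domain and codomain coincide with those of $\Delta_{t',t}$ because $R$ is commutative, so $tt'=t't$. Then $P\circ\Delta_{t,t'}$ and $\Delta_{t',t}$ are homomorphisms of $R$-rings out of $A(tt')$ agreeing on the generators ($r\mapsto r\otimes1$, $X(tt')\mapsto X(t')\otimes X(t)$, $Y(tt')\mapsto Y(t')\otimes Y(t)$), hence equal.

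The only step carrying real content is the relations check in part~(a) — in particular the two identities $\eta\xi=1\otimes tt'$ and $\xi\eta=1\otimes\si(tt')$, which rest on carefully moving scalars of $R$ across $\otimes_R$ while tracking the twist by $\si$ — together with, if one does not simply quote \cite{HR}, setting up the $R$-ring structure on $A(t)\otimes_R A(t')$ in the first place. Parts~(b) and~(c) are then purely formal comparisons of $R$-ring homomorphisms on a generating set, and I anticipate no obstacle there.
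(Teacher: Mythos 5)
The paper does not actually prove Theorem~\ref{thm:coprod}: it is recalled, with the citation ``See \cite[Thm 3.3]{HR} for the more general statement,'' so there is no in-paper argument to compare against line by line. Your reconstruction follows the natural approach --- define $\Delta_{t,t'}$ on the $R$-ring generators, check the GWA relations \eqref{eq:TGWA-Rels} on the proposed images, and then verify (b) and (c) by comparing $R$-ring homomorphisms on a generating set --- and it is correct in its essentials. (You also silently corrected what is plainly a typo in part~(b) of the statement, where the inner map on the right should be $\Id_{A(t)}\otimes\Delta_{t',t''}$.)

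One point deserves to be made explicit rather than left to the parenthetical ``on the elements involved.'' The $R$-module $A(t)\otimes_R A(t')$ of \eqref{eq:Rtensor} does \emph{not} carry a globally well-defined componentwise multiplication $(a\otimes b)(a'\otimes b')=aa'\otimes bb'$: replacing $a'\mapsto ra'$ gives $a(ra')\otimes bb'=\sigma^{m}(r)aa'\otimes bb'$ where $m=\deg a$, while the $R$-balanced replacement $b'\mapsto rb'$ gives $aa'\otimes b(rb')=aa'\otimes\sigma^{n}(r)bb'$ where $n=\deg b$, and these agree under balancing only when $m=n$. So the componentwise product is well-defined only on the degree-diagonal subspace $\bigoplus_{n\in\Z}A(t)_n\otimes_R A(t')_n$, which is an honest $R$-ring and is precisely where the image of $\Delta_{t,t'}$ lands. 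Your computations never leave this diagonal (each of $r\otimes 1$, $X(t)\otimes X(t')$, $Y(t)\otimes Y(t')$, and their products has matching degree in both factors), so the argument goes through, but a careful write-up should state that this diagonal is the target and that the multiplication is legitimate there. With that caveat recorded, parts (b) and (c) are, as you say, purely formal comparisons of $R$-ring maps on generators.
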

\begin{Definition}
If $A$ is an $R$-ring, an $A$-module $M$ is called a  \emph{weight module} if
\[
M=\bigoplus_{\Fm\in\MaxSpec(R)} M_\Fm,\qquad
M_\Fm=\{v\in M\mid \Fm v=0\}.
\]
For a weight module $M$, we define the \emph{support} of $M$ to be
$$ \Supp(M):=\{\Fm\in\MaxSpec(R)~|~M_\Fm\neq 0\}.$$
For $t\in R$ regular, we let $A(t)\text{-wmod}$ be the category of weight modules $M$ for $A(t)$ such that $M_\Fm$ is a finite dimensional $R/\Fm$ vector space for all $\Fm\in\MaxSpec(R)$.
\end{Definition}

We let $M\otimes_R M'$ denote the tensor product of left $R$-modules over the commutative ring $R$.

\begin{Lemma}[See {\cite[Lemma 3.8]{HR}}]\label{lemma:weightmod}
If $M$ is a weight module for $A(t)$ and $M'$ is a weight module for $A(t')$, then $M\otimes_R M'$ is a weight module for $A(t)\otimes_R A(t')$ via the action $(a\otimes a')\cdot (v\otimes v')=av\otimes a'v'$. In particular we have $(M\otimes_R M')_\Fm=M_\Fm\otimes_R M'_\Fm$ for all $\Fm\in\MaxSpec(R)$ and $\Supp(M\otimes_R M')=\Supp(M)\cap \Supp(M')$.
\end{Lemma}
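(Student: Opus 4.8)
The plan is to separate the statement into three parts: (i) the formula $(a\otimes a')\cdot(v\otimes v')=av\otimes a'v'$ gives a well-defined action of the $R$-ring $A(t)\otimes_R A(t')$ on $M\otimes_R M'$; (ii) $M\otimes_R M'=\bigoplus_\Fm M_\Fm\otimes_R M'_\Fm$, with the $\Fm$-summand equal to $(M\otimes_R M')_\Fm=\{v\mid \Fm v=0\}$; (iii) these weight spaces are finite dimensional and $\Supp(M\otimes_R M')=\Supp(M)\cap\Supp(M')$. I would prove them in the order (ii), (i), (iii): part (ii) is purely about the $R$-module structure and is also used inside (i), while (iii) follows immediately from (ii).

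For (ii), distribute $\otimes_R$ over $M=\bigoplus_\Fm M_\Fm$ and $M'=\bigoplus_{\Fm'} M'_{\Fm'}$ to get $M\otimes_R M'=\bigoplus_{\Fm,\Fm'} M_\Fm\otimes_R M'_{\Fm'}$. If $\Fm\neq\Fm'$, then $M_\Fm\otimes_R M'_{\Fm'}$ is annihilated by $\Fm$ (acting through the first factor) and by $\Fm'$ (acting through the second, using the balancing relation $(xv)\otimes v'=v\otimes(xv')$), hence by $\Fm+\Fm'=R$ since distinct maximal ideals are comaximal; so that summand is $0$. Therefore $M\otimes_R M'=\bigoplus_\Fm M_\Fm\otimes_R M'_\Fm$, and each summand is killed by $\Fm$, so it lies inside $(M\otimes_R M')_\Fm$. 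Conversely, given $w\in(M\otimes_R M')_\Fm$, write $w=\sum_\Fn w_\Fn$ along this decomposition; for any $x\in\Fm$, directness of the sum forces $xw_\Fn=0$ for every $\Fn$, so each $w_\Fn$ is killed by $\Fm$ and, being in $M_\Fn\otimes_R M'_\Fn$, also by $\Fn$, hence by $\Fm+\Fn=R$ whenever $\Fn\neq\Fm$; thus $w=w_\Fm\in M_\Fm\otimes_R M'_\Fm$. This proves $(M\otimes_R M')_\Fm=M_\Fm\otimes_R M'_\Fm$ and, in particular, that $M\otimes_R M'$ is a weight module.

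Step (iii) is then formal: since $R\to R/\Fm$ is surjective, $M_\Fm\otimes_R M'_\Fm=M_\Fm\otimes_{R/\Fm}M'_\Fm$ is a tensor product of two finite dimensional $R/\Fm$-vector spaces, hence finite dimensional, and it is nonzero exactly when both $M_\Fm$ and $M'_\Fm$ are nonzero, which gives $\Supp(M\otimes_R M')=\Supp(M)\cap\Supp(M')$. The main obstacle is step (i). Compatibility of the formula with $\Z$-bilinearity, with the relations $ra\otimes b-a\otimes rb$ from \eqref{eq:Rtensor}, and with associativity and unitality for the ring structure on $A(t)\otimes_R A(t')$ is routine once one uses that elements of $R$ move freely across the tensor sign in $M\otimes_R M'$; the delicate point is the relation $a(rv)\otimes a'v'=av\otimes a'(rv')$ needed for well-definedness on the $M\otimes_R M'$ factor. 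I would check it by reducing to $a,a'$ monomials in $X,Y$, pushing $r$ through the monomials via $Xr=\si(r)X$ and $Yr=\si^{-1}(r)Y$ and then across the tensor sign: the resulting discrepancy is a sum of terms lying in pieces $M_\Fm\otimes_R M'_{\Fm'}$ with $\Fm\neq\Fm'$, which vanish by (ii), plus matched-weight terms on which the relevant power of $\si$ restricts to the identity on the residue field (by the running hypotheses on $R$), so the discrepancy is $0$. This part is \cite[Lemma 3.8]{HR} specialized to rank one.
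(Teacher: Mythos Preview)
The paper does not give its own proof of this lemma; it is stated with a reference to \cite[Lemma~3.8]{HR} and no argument is supplied. So there is nothing in the paper to compare your proposal against directly.

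Your arguments for (ii) and (iii) are correct and are exactly the standard ones. For (i), your outline is on the right track, and you correctly locate the only nontrivial point: checking that $a(rv)\otimes a'v'=av\otimes a'(rv')$ in $M\otimes_R M'$. Your reduction to homogeneous $a,a'$ and the observation that mismatched-weight pieces vanish by (ii) is fine. The one place to be careful is your appeal to ``the running hypotheses on $R$'' to conclude that, on a matched-weight piece $M_{\si^n(\Fm)}\otimes_R M'_{\si^m(\Fm)}$ with $\si^n(\Fm)=\si^m(\Fm)$, the induced automorphism $\si^{m-n}$ of $R/\Fm$ is the identity. At the point in this paper where the lemma is stated, no such hypothesis has yet been imposed; the simplifying assumption $\si^p=\Id_R$ (which indeed makes your argument go through, since then $\si^{m-n}(\Fm)=\Fm$ forces $p\mid m-n$ and hence $\si^{m-n}=\Id_R$) is only introduced later in Section~\ref{sec:notation}. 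In the general rank-one setting one needs exactly the condition that the stabilizer of $\Fm$ in $\langle\si\rangle$ acts trivially on $R/\Fm$, which is what \cite{HR} arranges. So your sketch is correct under the paper's eventual standing assumption, but you should make that dependence explicit rather than calling it a ``running hypothesis''.
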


\begin{Definition}
Let $\mathsf{M}\subset R_{\operatorname{reg}}$ be a multiplicatively closed subset of the regular elements of $R$, then we can define two $\mathsf{M}$-graded $\C$-algebras
$$\mathscr{A}(\mathsf{M})=\bigoplus_{t\in\mathsf{M}} \C\otimes_{\Z}K_0(A(t)\text{-wmod})\qquad\qquad \mathscr{A}^{\text{split}}(\mathsf{M})=\bigoplus_{t\in\mathsf{M}} \C\otimes_{\Z}K_0^{\text{split}}(A(t)\text{-wmod})$$
with multiplication given by the map $$[M]\otimes [M']\mapsto [M\otimes_{R} M']$$
where $M\otimes_{R} M'$ is seen as a module for $A(tt')$ by restriction under the map \eqref{eq:tgwa-coprod}.
\end{Definition}
\begin{Remark}
By Theorem \ref{thm:coprod}(b)(c), multiplication in $\mathscr{A}(\mathsf{M})$ and $\mathscr{A}^{\text{split}}(\mathsf{M})$ is associative and commutative. 
\end{Remark}

Let $\omega\in\MaxSpec(R)/\Z$ be an orbit in the maximal spectrum of $R$ with respect to the action of $\sigma$.
Let $A(t)\mathrm{-wmod}_{\omega}$ be the full subcategory of $A(t)\mathrm{-wmod}$ consisting of all weight modules with support in $\omega$. It is well-known (and easy to see from $XM_\Fm\subseteq M_{\si(\Fm)}, YM_\Fm\subseteq M_{\si^{-1}(\Fm)}$) that $A(t)\mathrm{-wmod}$ splits as follows:
\[
A(t)\mathrm{-wmod} = \prod_{\omega\in\MaxSpec(R)/\Z} A(t)\mathrm{-wmod}_\omega.
\]
As a consequence we have the following result.
\begin{Proposition}[{\cite[Prop. 3.15]{HR}}]\label{prop:direct-sum}We have the following direct products of $\C$-algebras:
\begin{equation}\label{eq:directsum} \mathscr{A}(\mathsf{M})=\prod_{\omega\in\MaxSpec(R)/\Z}\scA(\mathsf{M},\omega),\qquad \mathscr{A}^{\mathrm{split}}(\mathsf{M})=\prod_{\omega\in\MaxSpec(R)/\Z}\scA^{\mathrm{split}}(\mathsf{M},\omega) \end{equation}
where $\scA(\mathsf{M},\omega)=\bigoplus_{t\in\mathsf{M}} \C\otimes_{\Z}K_0(A(t)\text{-wmod}_\omega)$ and analogously for $\scA^{\mathrm{split}}(\mathsf{M},\omega)$.
\end{Proposition}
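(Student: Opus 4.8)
The plan is to derive the algebra decomposition from the decomposition of module categories recalled just above, in three stages: lift the categorical splitting to Grothendieck groups, carry it through the $\mathsf{M}$-grading, and check that the tensor multiplication respects the splitting.

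First, recall why $A(t)\text{-wmod}=\prod_{\omega}A(t)\text{-wmod}_\omega$: for a weight module $M$ and an orbit $\omega\in\MaxSpec(R)/\Z$, the subspace $M_\omega:=\bigoplus_{\Fm\in\omega}M_\Fm$ is an $A(t)$-submodule, since $XM_\Fm\subseteq M_{\si(\Fm)}$, $YM_\Fm\subseteq M_{\si^{-1}(\Fm)}$, and $\si^{\pm1}(\Fm)$ lies in the same orbit as $\Fm$; and $M=\bigoplus_\omega M_\omega$ with $M_\omega\in A(t)\text{-wmod}_\omega$. The projection functors $M\mapsto M_\omega$ are exact and commute with direct sums, so they induce a homomorphism $[M]\mapsto([M_\omega])_\omega$ from $K_0(A(t)\text{-wmod})$ to $\prod_\omega K_0(A(t)\text{-wmod}_\omega)$, and likewise for $K_0^{\text{split}}$. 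This map is surjective (a given tuple is hit by the class of a direct sum of representatives) and injective (a relation valid in every orbit-component assembles, using exactness of direct sums of short exact sequences together with $M\cong\bigoplus_\omega M_\omega$, into a relation in the ambient category). Applying $\C\otimes_\Z(-)$ and taking the $\mathsf{M}$-graded sum over $t\in\mathsf{M}$ then gives \eqref{eq:directsum} as a decomposition of $\mathsf{M}$-graded $\C$-vector spaces, in which the factor indexed by $\omega$ is exactly $\scA(\mathsf{M},\omega)$, respectively $\scA^{\mathrm{split}}(\mathsf{M},\omega)$.

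It remains to see that this is a decomposition of $\C$-algebras, i.e.\ that the product of a class in the $\omega$-factor with a class in the $\omega'$-factor vanishes when $\omega\neq\omega'$ and lies in the $\omega$-factor when $\omega=\omega'$. If $M$ has support in $\omega$ and $M'$ has support in $\omega'$, then Lemma \ref{lemma:weightmod} gives $\Supp(M\otimes_R M')=\Supp(M)\cap\Supp(M')\subseteq\omega\cap\omega'$. This is empty when $\omega\neq\omega'$, so $[M][M']=0$; and it is contained in $\omega$ when $\omega=\omega'$, so $[M][M']\in\scA(\mathsf{M},\omega)$. Extending bilinearly, the multiplication on $\mathscr{A}(\mathsf{M})$ is the product of the multiplications on the factors $\scA(\mathsf{M},\omega)$, and the split case is identical; this proves the proposition.

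The only delicate point is the passage to $K_0$ in the second paragraph: a weight module may have support meeting infinitely many orbits, so one is genuinely working with an infinite product of Grothendieck groups, and one must be sure that, with the conventions in force, a class is detected by its orbit-components and the natural map is an honest isomorphism. Once this and Lemma \ref{lemma:weightmod} are in hand, both the categorical splitting and the compatibility with multiplication are formal.
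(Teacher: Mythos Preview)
Your argument is correct and is exactly the expected one: pass from the categorical splitting $A(t)\text{-wmod}=\prod_\omega A(t)\text{-wmod}_\omega$ to Grothendieck groups, sum over $t\in\mathsf{M}$, and then use Lemma~\ref{lemma:weightmod} to see that the multiplication is orbit-diagonal. Note that the present paper does not actually supply a proof of this proposition---it is quoted from \cite[Prop.~3.15]{HR}---so there is no in-paper argument to compare against; your proof is precisely the standard one that underlies the cited reference.
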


\subsection{Connection to quiver algebras}

Although not needed in the rest of the paper, we provide the following perspective relating weight modules over GWAs to quiver algebra representations.

Let $\omega\in\MaxSpec(R)/\Z$. If $|\omega|=p<\infty$ we assume that $\si^p$ is the identity on $R/\Fm$ for $\Fm\in\omega$. Fix $\Fm_0\in\omega$ and let $\Bbbk$ be the field $R/\Fm_0$. Let $\mathcal{Q}=(\mathcal{Q}_0,\mathcal{Q}_1,s,t)$ be the quiver with vertex set $\mathcal{Q}_0=\omega$ and arrow set $\mathcal{Q}_1=\{\al_\Fm^\pm\}_{\Fm\in\omega}$ with source $s(\al_\Fm^\pm)=\Fm$ and target $t(\al_\Fm^\pm)=\si^{\pm 1}(\Fm)$. 

\begin{Proposition} 
There is a tensor functor from the category $\bigoplus_{t\in \mathsf{M}} A(t)\mathrm{-wmod}_{\omega}$ to the category of $\Bbbk$-linear representations of $\mathcal{Q}$.
This functor induces a ring homomorphism from the
(split) Grothendieck ring $\mathscr{A}(\mathsf{M},\omega)$ to the (split) Grothendieck ring of $\Bbbk$-linear quiver representations of $\mathcal{Q}$.
\end{Proposition}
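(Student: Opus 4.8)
The plan is to construct the functor explicitly on weight spaces, recognize the relevant monoidal structure on the quiver side as the vertexwise tensor product, and then deduce the statement about Grothendieck rings formally from exactness and monoidality. Throughout, let $\mathrm{rep}_\Bbbk(\mathcal{Q})$ denote the category of finite-dimensional $\Bbbk$-linear representations of $\mathcal{Q}$ (which is where the functor will land) equipped with the vertexwise tensor product $(V\otimes W)_\Fm=V_\Fm\otimes_\Bbbk W_\Fm$, with $V_\al\otimes W_\al$ assigned to each arrow $\al$ and with monoidal unit the representation that is $\Bbbk$ at every vertex and the identity on every arrow; its (split) Grothendieck ring is the one referred to in the statement.

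First I would set up the functor. For $\Fm\in\om$, writing $\Fm=\si^k(\Fm_0)$, let $R/\Fm\xrightarrow{\sim}\Bbbk$ be the isomorphism induced by $\si^{-k}$; this is independent of the chosen exponent $k$ (hence of $k$ modulo $p$) precisely because $\si^p$ acts as the identity on the residue fields of points of $\om$. Via these identifications, each weight space $M_\Fm$ of a module $M\in A(t)\mathrm{-wmod}_\om$ is a finite-dimensional $\Bbbk$-vector space, and $\bigoplus_{\Fm\in\om}M_\Fm$ is finite-dimensional over $\Bbbk$ since $|\om|=p$ is finite. Define $F_t(M)$ to be the representation of $\mathcal{Q}$ with $\Bbbk$-space $M_\Fm$ at the vertex $\Fm$, with $X|_{M_\Fm}\colon M_\Fm\to M_{\si(\Fm)}$ assigned to $\al_\Fm^+$ and $Y|_{M_\Fm}\colon M_\Fm\to M_{\si^{-1}(\Fm)}$ assigned to $\al_\Fm^-$, and with $F_t$ acting as restriction to weight spaces on morphisms. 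The relations $Xr=\si(r)X$ and $Yr=\si^{-1}(r)Y$ of \eqref{eq:TGWA-Rels}, together with the above choice of $\Bbbk$-structures, force $X|_{M_\Fm}$ and $Y|_{M_\Fm}$ to be $\Bbbk$-linear, so $F_t(M)$ is a genuine representation of $\mathcal{Q}$. Let $F$ be the functor $\bigoplus_{t\in\mathsf{M}}A(t)\mathrm{-wmod}_\om\to\mathrm{rep}_\Bbbk(\mathcal{Q})$ restricting to $F_t$ on each summand; it is exact because taking weight spaces is exact on weight modules.

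Next I would check that $F$ is a tensor functor. By Lemma \ref{lemma:weightmod} we have $(M\otimes_R M')_\Fm=M_\Fm\otimes_R M'_\Fm=M_\Fm\otimes_\Bbbk M'_\Fm$, which is the structure map of a natural isomorphism $F_{tt'}(M\otimes_R M')\cong F_t(M)\otimes F_{t'}(M')$; and since by Theorem \ref{thm:coprod}(a) the map $\Delta_{t,t'}$ sends $X(tt')$ to $X(t)\otimes X(t')$ and $Y(tt')$ to $Y(t)\otimes Y(t')$, under this isomorphism the arrow operators of $F_{tt'}(M\otimes_R M')$ become exactly $X\otimes X$ and $Y\otimes Y$, namely those of the vertexwise tensor product. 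Coherence of these structure isomorphisms follows from coherence of $\otimes_\Bbbk$ on the individual weight spaces and from the coassociativity of Theorem \ref{thm:coprod}(b); when $1\in\mathsf{M}$ the monoidal unit is $F_1$ of the $A(1)$-module $\bigoplus_{\Fm\in\om}R/\Fm$. Thus $F$ is a (strong) tensor functor. Finally, the vertexwise tensor product is exact in each variable (as $\otimes_\Bbbk$ is exact over a field) and additive, so it descends to an associative, commutative multiplication on $\C\otimes_\Z K_0(\mathrm{rep}_\Bbbk(\mathcal{Q}))$ and on $\C\otimes_\Z K_0^{\mathrm{split}}(\mathrm{rep}_\Bbbk(\mathcal{Q}))$; since $F$ is exact and additive the assignment $[M]\mapsto[F(M)]$ is well defined on each Grothendieck group, and since $F$ is monoidal it is a ring homomorphism $\scA(\mathsf{M},\om)\to\C\otimes_\Z K_0(\mathrm{rep}_\Bbbk(\mathcal{Q}))$, respectively $\scA^{\mathrm{split}}(\mathsf{M},\om)\to\C\otimes_\Z K_0^{\mathrm{split}}(\mathrm{rep}_\Bbbk(\mathcal{Q}))$.

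The step I expect to require the most care is the construction of the functor itself: checking that the residue-field identifications $R/\Fm\cong\Bbbk$ are well defined — which is exactly where the finite-order hypothesis on $\si^p$ is used — and that, relative to these identifications, $X$ and $Y$ act $\Bbbk$-linearly rather than merely additively on the weight spaces, in particular along the arrows of $\mathcal{Q}$ that wrap around the $p$-cycle. Once that bookkeeping is done, the monoidality and the passage to Grothendieck rings are formal consequences of Lemma \ref{lemma:weightmod} and Theorem \ref{thm:coprod}.
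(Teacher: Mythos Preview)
Your proof is correct and follows essentially the same approach as the paper's: define the functor via weight spaces with the residue-field identifications induced by powers of $\si$, assign $X$ and $Y$ to the arrows $\al_\Fm^\pm$, verify monoidality using Lemma~\ref{lemma:weightmod} and Theorem~\ref{thm:coprod}(a), and deduce the ring homomorphism from exactness. You include more detail than the paper on the well-definedness of the $\Bbbk$-structures and the $\Bbbk$-linearity of $X$ and $Y$ (which is where the hypothesis on $\si^p$ enters), but the overall argument is the same.
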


\begin{proof}
Let $V$ be an object of $A(t)\text{-wmod}_\omega$. We define a corresponding module $F(V)$ for the quiver algebra as follows. For $\Fm\in\omega$, let $F(V)_\Fm$ be the weight space $V_\Fm$.
All the orbit elements $\Fm\in\omega$ are of the form $\Fm=\si^i(\Fm_0)$ for some $i\in\Z$. We identify $\Bbbk$ with $R/\si^i(\Fm_0)$ via the map induced by $\si^i$.
In this way we can regard $V_\Fm$ as a vector space over $\Bbbk$.
Then, for an arrow $\al=\al_\Fm^+:\Fm\to \si(\Fm)$ in the quiver, we let the corresponding $\Bbbk$-linear transformation $x_\al$ be given by the action of $X$. Similarly for an arrow $\beta=\al_\Fm^-:\Fm\to\si^{-1}(\Fm)$ we define $x_\beta$ to be given by the action of $Y$. 
If $\varphi:V\to W$ is a morphism then $F(\varphi)_\Fm:V_\Fm \to W_\Fm$ is the restriction of $\varphi$ to the weight space $V_\Fm$ for each $\Fm$. Since all the weight space functors $V\mapsto V_{\Fm}$ are exact, $F$ is exact.

We show that the functor $F$ constructed is a tensor functor. By definition, $F(V\otimes_R V')_\Fm = (V\otimes_R V')_\Fm$. By Lemma \ref{lemma:weightmod}, $(V\otimes_R V')_\Fm=V_\Fm\otimes_\Bbbk V'_\Fm=F(V)_\Fm\otimes_\Bbbk F(V')_\Fm$. This shows that $F(V\otimes_R V')=F(V)\otimes_\K F(V')$ as vector spaces. Furthermore, by Theorem \ref{thm:coprod}(a), $X$ and $Y$ are acting diagonally on tensor products of weight spaces, which corresponds to the diagonal action of the maps $x_\al$ and $x_\beta$ from the quiver. By the definition of tensor products of quiver representations, this shows that $F(V\otimes V')=F(V)\otimes F(V')$.

Since the functor $F$ is an exact tensor functor, it is immediate that we obtain a ring homomorphism of (split) Grothendieck rings.
\end{proof}

\subsection{Weight modules over GWAs supported on finite orbits}

Let $t\in R$ be a regular element, $A(t)=A(R,\si,t)$ the corresponding GWA,  $\om\in\MaxSpec(R)/\Z$ be a fixed finite orbit, $|\om|=p$. In this section we recall the description of simple and indecomposable objects in the category $A(t)\mathrm{-wmod}_\omega$ 
. For simplicity, from now on we assume that $\si^p=\Id_R$. Simple and indecomposable weight modules for GWAs, both supported on infinite orbits and on finite orbits, have been classified, see Bavula \cite{Bav96} and Drozd, Guzner and Ovsienko \cite{DGO}. We will rely heavily on the classification of \cite{DGO}, however we will use different notation and conventions that are better adapted to our goals of describing the tensor products of weight modules in the next section.

\begin{Definition}
A maximal ideal $\mf\in\MaxSpec(R)$ is called a \emph{break} for $A(t)$ if $t\in\mf$.
Let $B^t_\om$ be the set of all breaks for $A(t)$ in $\om$: $B^t_\om=\{\mf\in\om~|~ t\in\mf\}$.
\end{Definition}

\begin{Definition}
We fix once and for all $\mf_0\in\om$, so that $\om=\{\mf_0,\si(\mf_0),\ldots,\si^{p-1}(\mf_0)\}$. We define $\K:=R/\mf_0$. Given $t\in R_{\operatorname{reg}}$, we define the set of indexed words $(w,j)$ $$\mathbf{D}^t_\om=\{(w,j)~|~ w=w_{j}w_{j+1}\cdots w_{j+\ell},~ j\in\Z,~\ell\in \N, ~ w_i\in\{1,x,y,0\}, ~w_i=1 \text{ iff }\si^i(\mf_0)\not\in B^t_\om\}.$$
\end{Definition}

By abuse of notation, whenever we write $w_jw_{j+1}\cdots w_{j+\ell}\in \mathbf{D}^t_\om$ we really mean $(w_jw_{j+1}\cdots w_{j+\ell},\,j)\in \mathbf{D}^t_\om$.

\begin{Remark}
Comparing with the notation in \cite{DGO}, we have $\mf_0=\mf(\om)$ (which is assumed to be a break there, we do not make that assumption); $\K=K_\om$, $\K[x,x^{-1}]=K_\om[x,x^{-1},\tau_\om]$, $\K[x]=K_\om[x,\tau_\om]$ (since $\tau_\om=\si^p=\Id_R$ by our simplifying assumption).  Also, the set of words $\mathbf{D}^t_\om$ differs from the set $\mathbf{D}$ (the free monoid in $x$ and $y$) which was used in \cite[\S 4.8]{DGO}. Any word $w\in \mathbf{D}$ starts at index $1$ and its letters only label the breaks in the orbit. In our case we are labeling all the elements of the orbit, but non-breaks (and only those) are labeled by the letter $1$.
\end{Remark}

\begin{Definition}\label{def:finorb-circle}
Let $w=w_1w_2\ldots w_{r p}$ be a word such that $(w,1)\in \mathbf{D}^t_\om$, and let $F\in\Aut_{\K}(\K^d)$ which we can think of as an invertible $d\times d$ matrix with entries in $\K$.
We define an $R$-module $$V^t(\om,w,F)=\bigoplus_{\substack{1\leq k\leq r p\\ 1\leq s\leq d}} \left(R/\si^k(\mf_0)\right) e^t_{k,s}$$ with $A(t)$ action given by
\begin{align}
\label{eq:newVomwFX}
Xe^t_{k,s}&=
\begin{cases}
\si(t) e^t_{k+1,s},&\text{if $k\neq r p$, $w_k=1$},\\
e^t_{k+1,s},&\text{if $k\neq r p$, $w_k=x$},\\
\si(t)\sigma(F)e^{t}_{1,s},&\text{if $k= r p$, $w_{r p}=1$},\\
\sigma(F)e^{t}_{1,s},&\text{if $k= r p$, $w_{r p}=x$},\\
0,&\text{ if $w_k\in\{y,0\}$};
\end{cases}\\
\label{eq:newVomwFY}
Ye^t_{k,s}&=
\begin{cases}
e^t_{k-1,s},&\text{if $k\neq 1$, $w_{k-1}\in\{1,y\}$},\\
F^{-1}e^t_{\ell p,s},&\text{if $k=1$, $w_{r p}\in\{1,y\}$},\\
0,&\text{ if $w_{k-1}\in\{x,0\}$},
\end{cases}
\end{align}
where $\si(F)$ is the automorphism of $(R/\si(\Fm_0))^d$ induced by $F$ and we identified $\K^d=\oplus_{s=1}^d \K e^t_{r p,s}$. Furthermore, for $r\in R$ the expression $r e^t_{k,s}$ indicates the left $R$-module action of $r$ on the vector $e^t_{k,s}$.

If $f\in\K[x,x^{-1}]$ or $f\in\K[x]$, we define $V^t(\om,w,f):=V^t(\om,w,F_f)$ where $F_f$ is the \emph{companion matrix} of $f$, which is defined as follows, if $f=\al_0+\al_1x+\cdots+\al_dx^d$ with $\al_d\neq 0$, 
\[F_f=\begin{bmatrix}
0&0&0&\cdots&0&-\al_0/\al_d\\
1&0&0&\cdots&0&-\al_1/\al_d\\
0&1&0&\cdots&0&-\al_2/\al_d\\
\vdots&\vdots&\vdots&\ddots&\vdots&\vdots\\
0&0&0&\cdots&1&-\al_{d-1}/\al_d
\end{bmatrix}.
\] 
\end{Definition}

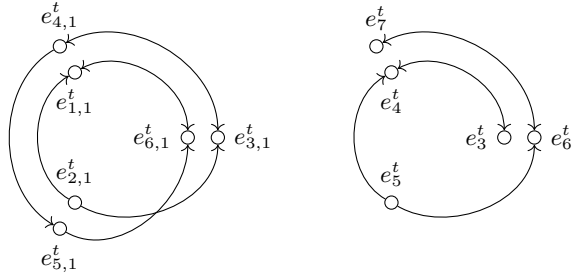
\begin{figure}
\centering
\begin{tikzpicture}[baseline=-.5ex, vertex/.style={draw,circle,fill=white,minimum size=5pt, inner sep=0pt}]
\node[vertex] (A) at (1,0) [label=left:\small $e^t_{6,1}$] {};
\node[vertex] (B) at ({cos(120)},{sin(120)}) [label=below:\small $e^t_{1,1}$] {};
\node[vertex] (C) at ({cos(240)},{sin(240)}) [label=above:\small $e^t_{2,1}$] {};
\node[vertex] (D) at (1.4,0) [label=right:\small $e^t_{3,1}$] {};
\node[vertex] (E) at ({1.4*cos(120)},{1.4*sin(120)}) [label=above:\small $e^t_{4,1}$] {};
\node[vertex] (F) at ({1.4*cos(240)},{1.4*sin(240)}) [label=below:\small $e^t_{5,1}$] {};
\draw[<->] (A) edge[out=90, in=30] (B);
\draw[<-] (B) edge[out=210, in=150] (C);
\draw[->] (C) edge[out=330, in=270] (D);
\draw[<->] (D) edge[out=90, in=30] (E);
\draw[->] (E) edge[out=210, in=150] (F);
\draw[->] (F) edge[out=330, in=270] (A);
\end{tikzpicture}
\qquad
\begin{tikzpicture}[baseline=-.5ex, vertex/.style={draw,circle,fill=white,minimum size=5pt, inner sep=0pt}]
\node[vertex] (A) at (1,0) [label=left:\small $e^t_{3}$] {};
\node[vertex] (B) at ({cos(120)},{sin(120)}) [label=below:\small $e^t_{4}$] {};
\node[vertex] (C) at ({cos(240)},{sin(240)}) [label=above:\small $e^t_{5}$] {};
\node[vertex] (D) at (1.4,0) [label=right:\small $e^t_{6}$] {};
\node[vertex] (E) at ({1.4*cos(120)},{1.4*sin(120)}) [label=above:\small $e^t_{7}$] {};
\draw[<->] (A) edge[out=90, in=30] (B);
\draw[<-] (B) edge[out=210, in=150] (C);
\draw[->] (C) edge[out=330, in=270] (D);
\draw[<->] (D) edge[out=90, in=30] (E);
\end{tikzpicture}
\caption{Illustration of the two kinds of modules. We take $p=3$, so $\om=\{\mf_0,\mf_1,\mf_2\}$, and $t$ such that $B^t_\om=\{\mf_1,\mf_2\}$. The vertices represent weight vectors in the basis, the weight space $V_{\mf_i}$, for $i=0,1,2$, consists of the vectors on the ray at an angle of $\frac{2\pi i}{3}$. The action of $X$ (resp. $Y$) moves counterclockwise (resp. clockwise) along the edges. The arrows on the edges indicate which action gives a nonzero result when acting on the given basis vectors, which correspond to the letters $1,x,y$ in the words $w$ (a letter $0$ would get no edge). The picture on the left corresponds to a module $V^t(\om,yx1xx1,F)$ from Definition \ref{def:finorb-circle} (with $d=1$ and an unspecified $F\in \Aut_\Bbbk(\Bbbk e^t_{6,1})$ ). The picture on the right corresponds to a module $V^t(\om,2,1yx1)$ from Definition \ref{def:type1mod}.}
\label{fig:goes-around-circle-twice}
\end{figure}

\begin{Remark}
If $0\neq W\subsetneq \K^d$ is an $F$-invariant subspace, then we get a nontrivial submodule $0\neq V^t(\om,w,F|_W)\subsetneq V^t(\om,w,F)$. If $\K^d=W_1\oplus W_2$ splits as a direct sum of $F$-invariant subspaces, we get submodule summands
$ V^t(\om,w,F)=V^t(\om,w,F|_{W_1})\oplus V^t(\om,w,F|_{W_2}).$
\end{Remark}

\begin{Remark}\label{rem:indec-nobr}
If $B^t_\om=\emptyset$, we necessarily have $w=1^{rp}$ and $V^t(\om,w,F)$ becomes a direct sum of indecomposable modules of the form $V(\om,f)$ as defined in \cite[§5.3]{DGO}. For any indecomposable polynomial $f\in\K[x,x^{-1}]$, we have an isomorphism $V(\om,f)\simeq V^t(\om,1^p,f)$. In particular, $V^t(\om,1^p,f)$ is an indecomposable (resp. simple) weight module if and only if $f$ is indecomposable (resp. irreducible) and all such modules are of this form.
\end{Remark}

\begin{Remark}\label{rem:hat-w}
If $B^t_\om\neq\emptyset$, a module $V(\om,w,f)$, as defined in \cite[§5.6]{DGO}, is isomorphic to a module $V^t(\om,\hat{w},f)$ of Definition \ref{def:finorb-circle} where $\hat{w}$ is obtained from $w$ by inserting the appropriate number of $1$'s in the word which correspond to the points in the orbit that are not breaks. In particular, $V^t(\om,w,f)$ is an indecomposable weight module if and only if the word $w$ is non-periodic (it is not the power of a word of length a multiple of $p$), it does not contain any letter $0$, and the polynomial $f$ is indecomposable. Similarly, $V^t(\om,w,f)$ is simple if and only if $f$ is irreducible and either $w=\hat{x^m}$ or $w=\hat{y^m}$ (where $m=|B^t_\om|$).
\end{Remark}

\begin{Definition}\label{def:type1mod}
Suppose $B_\omega^t\neq\emptyset$ and let $0\leq i\leq p-1$ be an index such that $\si^i(\mf_0)\in B^t_\om$, and $w=w_{i+1}w_{i+2}\cdots w_{i+\ell}$ such that $(w,i+1)\in\mathbf{D}^t_\om$, with $\si^{i+\ell+1}(\mf_0)\in B^t_\om$. We define an $R$-module by setting 
$$V^t(\om,i,w)=\bigoplus_{i+1\leq k\leq i+\ell+1} \left(R/\si^k(\mf_0)\right)e^t_k$$
with $A(t)$-action given by
\begin{align}
\label{eq:newVomiwX}
Xe^t_{k}&=
\begin{cases}
\si(t) e^t_{k+1},&\text{if $k\neq i+\ell+1$, $w_k=1$},\\
e^t_{k+1},&\text{if $k\neq i+\ell+1$, $w_k=x$},\\
0,&\text{if $k= i+\ell+1$ or if $w_k\in\{y,0\}$};
\end{cases}\\
\label{eq:newVomiwY}
Ye^t_{k}&=
\begin{cases}
e^t_{k-1},&\text{if $k\neq i+1$, $w_{k-1}\in\{1,y\}$},\\
0,&\text{if $k= i+1$ or if $w_{k-1}\in\{x,0\}$}.
\end{cases}
\end{align}
Notice that we allow $\ell=0$, that is, that $w=\emptyset$ is the empty word, if $\si^{i+1}(\mf_0)\in B^t_\om$. If that is the case, $V^t(\om,i,\emptyset)$ is one dimensional, and both $X$ and $Y$ act as zero.
\end{Definition}

\begin{Remark}
The modules $V(\om,j,w)$ defined in \cite[§5.5]{DGO} are isomorphic to $V^t(\om,i,\hat{w})$, where $\hat{w}$ is obtained from $w$ by inserting the appropriate number of $1$'s in the word which correspond to the points in the orbit that are not breaks. The corresponding index $i$ would then be such that $\si^i(\mf_0)$ is the $j$-th break in the orbit using the appropriate circular order. In particular, $V^t(\om,i,w)$ is an indecomposable weight module if and only if $w$ does not contain the letter zero, and $V^t(\om,i,w)$ is simple if and only if $w=1^s$ for some $s\geq 0$.
\end{Remark}

\begin{Remark}\label{rem:decomp-type1}
It is clear from the definition that if there exists $q$ such that $w_q=0$, then
$$ V^t(\om,i,w) \simeq V^t(\om,i,w')\oplus V^t(\om,\bar{q},w'')$$
where $w'=w_{i+1}w_{i+2}\cdots w_{q-1}$, $w''=w_{q+1}\cdots w_{i+\ell}$ and $0\leq \bar{q}\leq p-1$ is such that $\bar{q}\equiv q \pmod p$.
\end{Remark}

\begin{Lemma}\label{lemma:fakecirc}
If $F\in\Aut_{\K}(\K^d)$, $w=w_1\cdots w_{rp}$, and there exists $q$ such that $w_q=0$, then
\[V^t(\om,w,F)\simeq V^t(\om,\bar{q},w')^{\oplus d}\]
where $w'=w_{q+1}\cdots w_{rp} w_1 w_2\cdots w_{q-1}$ and $0\leq \bar{q}\leq p-1$ is such that $\bar{q}\equiv q \pmod p$.
\end{Lemma}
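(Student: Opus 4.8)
The plan is to write down an explicit isomorphism of $A(t)$-modules directly on the bases of Definitions~\ref{def:finorb-circle} and~\ref{def:type1mod}. The key point is that the letter $0$ at position $q$ severs every link across position $q$: by \eqref{eq:newVomwFX} we have $Xe^t_{q,s}=0$, and by \eqref{eq:newVomwFY} we have $Ye^t_{q+1,s}=0$. So, although $V^t(\om,w,F)$ does not obviously split, its ``circular'' basis $e^t_{1,s},\dots,e^t_{rp,s}$ is really strung out along a ``line''
\[
e^t_{q+1,s},\,e^t_{q+2,s},\,\dots,\,e^t_{rp,s},\,e^t_{1,s},\,\dots,\,e^t_{q,s},
\]
and the only edge still carrying $F$ is the old wrap-around edge $e^t_{rp,s}\leftrightarrow e^t_{1,s}$, which now lies in the interior of this line. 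The strategy is to absorb that $F$ by a change of basis on the part of the line after the edge; what comes out is exactly $V^t(\om,\bar q,w')^{\oplus d}$. First, though, I would check the right-hand side is well defined: since $w_q=0\neq 1$, the defining condition for $\mathbf D^t_\om$ forces $\si^q(\mf_0)\in B^t_\om$, hence $\si^{\bar q}(\mf_0)=\si^{\bar q+rp}(\mf_0)\in B^t_\om$; and $w'$, being $w$ with the single letter $w_q$ deleted, still satisfies that condition relative to the starting index $\bar q+1$, with $|w'|=rp-1$. Thus $V^t(\om,\bar q,w')$ has $rp$ basis vectors, of weights $\si^k(\mf_0)$ for $k=\bar q+1,\dots,\bar q+rp$ — matching the $rp$ weights in each of the $d$ copies of $\{e^t_{k,s}\}$.

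Next I would define the map. Let $f^{(s)}_k$ ($\bar q+1\le k\le\bar q+rp$) be the basis of the $s$-th copy of $V^t(\om,\bar q,w')$, and match the letters of $w'$ with those of $w$ via $w'_{\bar q+j}=w_{q+j}$ for $1\le j\le rp-q$ (the ``first arc'') and $w'_{\bar q+rp-q+j}=w_j$ for $1\le j\le q-1$ (the ``second arc''). Define an $R$-linear map $\Phi\colon V^t(\om,w,F)\to V^t(\om,\bar q,w')^{\oplus d}$ by
\begin{align*}
\Phi(e^t_{k,s})&=f^{(s)}_{\bar q+k-q} && (q+1\le k\le rp),\\
\Phi(e^t_{k,s})&=\sum_{s'}\si^k\!\big((F^{-1})_{s's}\big)\,f^{(s')}_{\bar q+rp-q+k} && (1\le k\le q),
\end{align*}
where $(F^{-1})_{s's}$ is the $(s',s)$-entry of $F^{-1}$ and $\si^k$ is applied entrywise (legitimate since $\si^p=\Id_R$ and all weights lie in $\om$). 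On the first arc $\Phi$ is the tautological identification of basis vectors; on the second arc it is that identification twisted by the invertible matrix $F^{-1}$; either way $\Phi$ is manifestly an isomorphism of $R$-modules.

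It then remains to check that $\Phi$ intertwines $X$ and $Y$, which I would do vertex by vertex, comparing \eqref{eq:newVomwFX}--\eqref{eq:newVomwFY} with \eqref{eq:newVomiwX}--\eqref{eq:newVomiwY}. Away from the junction the two sets of formulas have the same shape (by the index matching, the letter controlling $e^t_k$ is the letter controlling the image $f^{(s)}_{\cdot}$), and the twist $\si^k(F^{-1})$ is carried from $e^t_k$ to $e^t_{k\pm1}$ by exactly one application of $\si^{\pm1}$, compatibly with the scalars $\si(t)$ or $1$ in \eqref{eq:newVomiwX}--\eqref{eq:newVomiwY}. At the two ends $e^t_{q+1,s}$ and $e^t_{q,s}$ the relevant action vanishes on both sides (because $w_q=0$, matching that in $V^t(\om,\bar q,w')$ the first vector is killed by $Y$ and the last by $X$). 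And at the junction $e^t_{rp,s}\leftrightarrow e^t_{1,s}$ the matrices $F,F^{-1}$ in \eqref{eq:newVomwFX}--\eqref{eq:newVomwFY} are absorbed by the $F^{-1}$-twist that $\Phi$ places on $e^t_{1,s}$: for $X$ the factor $\si(F)$ cancels the $\si(F^{-1})$ twist since $\si(F)\si(F^{-1})=\Id$, and for $Y$ the twist $\si(F^{-1})$ on $e^t_{1,s}$ maps, under the $\si^{-1}$-twisted $R$-action of $Y$, onto exactly the $F^{-1}$ occurring there; in both cases this leaves the clean edge of \eqref{eq:newVomiwX}--\eqref{eq:newVomiwY}. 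Since $\Phi$ is already an $R$-module isomorphism, this completes the proof. The degenerate case $q=rp$ needs no computation: the edge being cut is then the wrap-around edge, so $F$ never enters $V^t(\om,w,F)$, $w'=w_1\cdots w_{rp-1}$, and $\Phi$ is simply the identity on each of the $d$ copies.

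I expect the only genuine obstacle to be notational: Definition~\ref{def:finorb-circle} writes ``$\si(F)$'' and ``$F^{-1}$'' on the wrap-around edge with the identifications of $\K^d$ with the successive weight spaces left implicit — the same implicit transport needed for $YX=t$ and $XY=\si(t)$ to hold at that edge — so one must fix a convention for $\si^k(F^{-1})$ and propagate it consistently through all $rp$ vertices. Once that is pinned down, each verification above is a one-line comparison of two case lists with no conceptual content; in particular, additional letters $0$ inside $w'$ cause no difficulty, since at such a letter the action vanishes on both sides and the twist passes straight through (consistently with Remark~\ref{rem:decomp-type1}, which decomposes both sides in the same way).
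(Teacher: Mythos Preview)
Your proof is correct and follows essentially the same approach as the paper's own proof: both construct an explicit $A(t)$-module isomorphism by cutting the circular basis at position $q$ and absorbing the automorphism $F$ into a change of basis on the second arc. The only cosmetic difference is the direction of the map---the paper defines new basis vectors $\epsilon^t_{k,s}$ inside $V^t(\om,w,F)$ by twisting the second arc by $\si^k(F)$, whereas you define $\Phi$ in the opposite direction and twist by $\si^k(F^{-1})$; these are inverse constructions.
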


\begin{proof}
Given the basis $\{e^t_{k,s}\}$ of $V^t(\om,w,F)$ as in Definition \ref{def:finorb-circle}, for each $s=1,\ldots, d$, and for all $\bar{q}+1\leq k\leq \bar{q}+rp$ we define
$$\epsilon^t_{k,s}:=\begin{cases}e^t_{k+q-\bar{q},s} & \text{ if }\bar{q}+1\leq k\leq \bar{q}+rp-q \\ \si^{k}(F)e^t_{k+q-\bar{q}-rp,s} & \text{ if }\bar{q}+rp-q+1\leq k\leq \bar{q}+rp \end{cases}$$
then, it is not hard to check that for each $s=1,\ldots,d$, the span of $\{\epsilon^t_{k,s}~|~\bar{q}+1\leq k\leq \bar{q}+rp\}$ gives a module isomorphic to $V^t(\om,\bar{q},w')$, with $w'=w_{q+1}\cdots w_{rp} w_1 w_2\cdots w_{q-1}$.
\end{proof}

\section{Tensor products of simple and indecomposable weight modules}\label{sec:definitions}

In this section we present new results regarding the tensor products of the weight modules defined in the previous section.

\begin{Lemma}\label{lemma:tens-FG}Suppose that $B^t_\om=B^{t'}_\om=B^{tt'}_\om=\emptyset$, then
\[ V^t(\om,1^p, F)\otimes_R V^{t'}(\om,1^p,G)\simeq V^{tt'}(\om,1^p, F\otimes G),\]
where $F\in \Aut_{\K}(\K^d)$, $G\in \Aut_{\K}(\K^{d'})$.
\end{Lemma}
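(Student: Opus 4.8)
The plan is to exhibit an explicit $R$-module isomorphism between the two sides and to verify that it intertwines the actions of $X$ and $Y$; since $R$ together with $X,Y$ generates $A(tt')$, this is enough.

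First, since $B^t_\om=B^{t'}_\om=B^{tt'}_\om=\emptyset$, the word is $1^p$ in all three cases, and $M:=V^t(\om,1^p,F)$ has the $R$-basis $\{e^t_{k,s}\mid 1\le k\le p,\ 1\le s\le d\}$ with $e^t_{k,s}\in M_{\si^k(\mf_0)}$, while $M':=V^{t'}(\om,1^p,G)$ has basis $\{e^{t'}_{k,s'}\mid 1\le k\le p,\ 1\le s'\le d'\}$ with $e^{t'}_{k,s'}\in M'_{\si^k(\mf_0)}$. By Lemma~\ref{lemma:weightmod}, $(M\otimes_R M')_{\si^k(\mf_0)}=M_{\si^k(\mf_0)}\otimes_R M'_{\si^k(\mf_0)}$, and since both factors are annihilated by $\si^k(\mf_0)$ this tensor product may be computed over the field $R/\si^k(\mf_0)$; hence it is free of rank $dd'$ on the vectors $f_{k,(s,s')}:=e^t_{k,s}\otimes e^{t'}_{k,s'}$. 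Identifying $\K^{dd'}=\K^d\otimes_\K\K^{d'}$ so that $F\otimes G$ is the Kronecker product, I would define $\phi\colon M\otimes_R M'\to V^{tt'}(\om,1^p,F\otimes G)$ by $\phi(f_{k,(s,s')})=e^{tt'}_{k,(s,s')}$; this is an isomorphism of $R$-modules carrying weight spaces to weight spaces.

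It then remains to check that $\phi$ commutes with the action of $X(tt')$ and $Y(tt')$. By Theorem~\ref{thm:coprod}(a) these act on $M\otimes_R M'$ as $X(t)\otimes X(t')$ and $Y(t)\otimes Y(t')$. For $1\le k<p$ one has $X(tt')f_{k,(s,s')}=\si(t)e^t_{k+1,s}\otimes\si(t')e^{t'}_{k+1,s'}$, and pulling the scalars through the $R$-balanced tensor and using $\si(t)\si(t')=\si(tt')$ gives $\si(tt')f_{k+1,(s,s')}$, matching the defining formula \eqref{eq:newVomwFX}; the computation of $Y(tt')f_{k,(s,s')}$ for $1<k\le p$ is the same with no scalar. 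The only delicate point is the boundary index: for $k=p$ one gets $X(tt')f_{p,(s,s')}=\si(t)\si(F)e^t_{1,s}\otimes\si(t')\si(G)e^{t'}_{1,s'}$, and since applying $\si$ entrywise to a matrix commutes with forming Kronecker products, $\si(F)\otimes\si(G)=\si(F\otimes G)$; expanding both sides in the bases $f_{1,(\cdot,\cdot)}$ and $e^{tt'}_{1,(\cdot,\cdot)}$ shows this equals $\si(tt')\,\si(F\otimes G)\,e^{tt'}_{1,(s,s')}$, as required. For $k=1$ the analogous identity $Y(tt')f_{1,(s,s')}=F^{-1}e^t_{p,s}\otimes G^{-1}e^{t'}_{p,s'}$ matches $(F\otimes G)^{-1}e^{tt'}_{p,(s,s')}$ using $(F\otimes G)^{-1}=F^{-1}\otimes G^{-1}$. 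Since $\phi$ is $R$-linear and intertwines $X$ and $Y$, it is an isomorphism of $A(tt')$-modules.

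The main obstacle is entirely bookkeeping: tracking how the scalars $\si(t),\si(t')$ and the ``gluing'' matrices $F,G$ move across the balanced tensor product $\otimes_R$, and checking the compatibility $\si(F\otimes G)=\si(F)\otimes\si(G)$ of the boundary twist with the Kronecker product. No conceptual difficulty arises, because the $A(tt')$-module structure on $M\otimes_R M'$ is already provided by Theorem~\ref{thm:coprod} and Lemma~\ref{lemma:weightmod}; the only task is to recognize the resulting module as the stated one.
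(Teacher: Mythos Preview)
Your proof is correct and follows essentially the same approach as the paper: both set up the basis $e^t_{k,s}\otimes e^{t'}_{k,s'}$ of the tensor product and verify directly that the $X$- and $Y$-actions match the defining formulas \eqref{eq:newVomwFX}--\eqref{eq:newVomwFY} for $V^{tt'}(\om,1^p,F\otimes G)$, with the boundary case $k=p$ handled via $\si(F)\otimes\si(G)=\si(F\otimes G)$. Your write-up is slightly more detailed (you spell out the $Y$-boundary case and the identity $(F\otimes G)^{-1}=F^{-1}\otimes G^{-1}$), but there is no substantive difference.
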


\begin{proof}
Consider the basis $\{e^t_{k,s}~|~1\leq k\leq p,~1\leq s\leq d\}$ of $V^t(\om,1^p, F)$ and the basis $\{e^{t'}_{k',s'}~|~1\leq k'\leq p,~1\leq s'\leq d'\}$ of $V^{t'}(\om,1^p, G)$, then we have a basis $\{\epsilon_{k,s,s'}~|~1\leq k\leq p, ~1\leq s\leq d,~1\leq s'\leq d'\}$ for $V^t(\om,1^p, F)\otimes_R V^{t'}(\om,1^p,G)$, where $\epsilon_{k,s,s'}=e^t_{k,s}\otimes e^{t'}_{k,s'}.$

Then, since $w=1^p$ for both modules, if $k\neq p$ we have
\begin{align*}
X\epsilon_{k,s,s'}&= X\cdot(e^t_{k,s}\otimes e^{t'}_{k,s'}) \\
&= Xe^t_{k,s}\otimes X e^{t'}_{k,s'} \\
&= \sigma(t)e^t_{k+1,s}\otimes \si(t') e^{t'}_{k+1,s'} \\
&= \si(t)\si(t')(e^t_{k+1,s}\otimes e^{t'}_{k+1,s'}) \\
&= \si(tt') \epsilon_{k+1,s,s'}.
\end{align*}
If $k= p$, we get
\begin{align*}
X\epsilon_{p,s,s'}&= X\cdot(e^t_{p,s}\otimes e^{t'}_{p,s'}) \\
&= Xe^t_{p,s}\otimes X e^{t'}_{p,s'} \\
&= \sigma(t)\si(F)e^t_{1,s}\otimes\si(t')\si(G)e^{t'}_{1,s'}\\
&= \si(t)\si(t')(\si(F)\otimes\si(G))\cdot ( e^t_{1,s}\otimes e^{t'}_{1,s'})\\
&= \si(tt')\si(F\otimes G) \epsilon_{1,s,s'}.
\end{align*}

We can similarly describe the action of $Y$ on the tensor product, which gives the desired result.
\end{proof}

\begin{Remark}
In general, even if $F=F_f$ and $G=F_g$, $F_f\otimes F_g$ will not be a companion matrix for a polynomial, in particular there is no guarantee of $V^{tt'}(\om,1^p,F\otimes G)$ being simple nor indecomposable regardless of whether $V^t(\om,1^p,F)$ and $V^{t'}(\om,1^p,G)$ are.
\end{Remark}

\begin{Corollary}
If $B^t_\om=\emptyset$, we can decompose
\[ V^t(\om,1^p,f)\simeq V^1(\om,1^p,f)\otimes_R V^t(\om,1^p,x-1).\]
\end{Corollary}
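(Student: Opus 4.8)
The plan is to recognize this corollary as a direct specialization of Lemma \ref{lemma:tens-FG}. First I would note that $1\in R$ is regular and lies in no proper ideal, so $B^1_\om=\emptyset$; together with the hypothesis $B^t_\om=\emptyset$ and the identity $1\cdot t=t$ (which gives $B^{1\cdot t}_\om=B^t_\om=\emptyset$), all three ``no breaks'' conditions required by Lemma \ref{lemma:tens-FG} hold for the pair $(1,t)$. So the lemma applies with the first algebra being $A(1)$ and the second being $A(t)$.

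Next I would unwind the notation $V^t(\om,1^p,x-1)=V^t(\om,1^p,F_{x-1})$ by computing the companion matrix: writing $x-1=\al_0+\al_1 x$ with $\al_0=-1$, $\al_1=1$, we have $d=1$, and the defining formula yields the $1\times 1$ matrix $F_{x-1}=[-\al_0/\al_1]=[1]=\Id_{\K}$. Applying Lemma \ref{lemma:tens-FG} with $t$ replaced by $1$, $t'$ replaced by $t$, $F=F_f\in\Aut_{\K}(\K^d)$, and $G=\Id_{\K}$ then gives
\[
V^1(\om,1^p,f)\otimes_R V^t(\om,1^p,x-1)\simeq V^{t}(\om,1^p,F_f\otimes\Id_{\K}).
\]
Since tensoring a $d\times d$ matrix with the $1\times 1$ identity returns the same matrix under the canonical identification $\K^d\otimes_{\K}\K\cong\K^d$, we have $F_f\otimes\Id_{\K}=F_f$, so the right-hand side equals $V^t(\om,1^p,F_f)=V^t(\om,1^p,f)$, which is exactly the claimed isomorphism.

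There is essentially no serious obstacle; the proof is a two-line deduction from Lemma \ref{lemma:tens-FG}. The only points requiring a moment's care are the bookkeeping of which multiplicative slot each GWA occupies in the lemma and the (trivial) identification $F_f\otimes\Id_{\K}=F_f$ via $\K^d\otimes_{\K}\K\cong\K^d$, both of which are routine.
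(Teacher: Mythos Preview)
Your proposal is correct and follows essentially the same approach as the paper's own proof: both apply Lemma \ref{lemma:tens-FG}, compute $F_{x-1}=[1]=\Id_\K$, and use the identification $F_f\otimes\Id_\K=F_f$. Your version is slightly more explicit in checking the three ``no breaks'' hypotheses of the lemma, but the argument is identical in substance.
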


\begin{proof}
This follows immediately from Lemma \ref{lemma:tens-FG} because $F_{x-1}=\begin{bmatrix} 1 \end{bmatrix}=I_1$ is the identity matrix of a one dimensional vector space, hence $F_f\otimes F_{x-1}=F_f$ for all $f\in \K[x]$.
\end{proof}

\begin{Lemma}\label{lem:nobr-tens-2nd}
Let $w=w_1w_2\ldots w_{r p}\in \mathbf{D}^t_\om$, and $F\in\Aut_{\K}(\K^d)$, then
\[V^1(\om,1^p,F)\otimes_R V^t(\om,w,x-1)\simeq V^t(\om,w,F^{r}).\]
\end{Lemma}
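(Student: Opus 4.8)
The plan is to write down explicit bases for both sides and exhibit an $R$-module isomorphism that intertwines the $X$ and $Y$ actions. On the left, $V^1(\om,1^p,F)$ has basis $\{e^1_{k,s}\mid 1\le k\le p,\ 1\le s\le d\}$ with $X$ acting (since $t=1$ and $w=1^p$) simply by $e^1_{k,s}\mapsto e^1_{k+1,s}$ for $k\ne p$ and $e^1_{p,s}\mapsto \si(F)e^1_{1,s}$, and $Y$ acting by the inverse shift. On the right, $V^t(\om,w,x-1)=V^t(\om,w,I_1)$ has basis $\{e^t_{k}\mid 1\le k\le rp\}$ with the action from \eqref{eq:newVomwFX}–\eqref{eq:newVomwFY} but with the matrix $F$ replaced by the identity, so the ``wrap-around'' at position $rp$ carries no twist (beyond the scalar $\si(t)$ that appears exactly when $w_{rp}=1$). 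The tensor product $V^1(\om,1^p,F)\otimes_R V^t(\om,w,x-1)$ then has the obvious basis $\epsilon_{k,s}=e^1_{k \bmod p,\,s}\otimes e^t_{k}$, where for $1\le k\le rp$ the first index is reduced mod $p$ into $\{1,\dots,p\}$; by Lemma \ref{lemma:weightmod} this is a basis of the $\mf$-weight spaces as $\mf$ runs over $\om$.

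First I would check that, for $1\le k\le rp-1$, the action of $X$ (resp. $Y$) on $\epsilon_{k,s}$ is governed entirely by the letter $w_k$ (resp. $w_{k-1}$) and matches \eqref{eq:newVomwFX}–\eqref{eq:newVomwFY} for the target module $V^t(\om,w,F^{r})$: this is immediate because $X$ and $Y$ act diagonally on the tensor factors (Theorem \ref{thm:coprod}(a)), the $V^1(\om,1^p,F)$ factor contributes only the untwisted shift $e^1_{i,s}\mapsto e^1_{i+1,s}$ away from the index-$p$ seam, and the $V^t(\om,w,x-1)$ factor contributes exactly the scalar ($\si(t)$ or $1$) dictated by $w_k$. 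The interesting part is the behavior at the seam: as $k$ runs from $1$ to $rp$, the first tensor index cycles through $1,2,\dots,p,1,2,\dots$ and so $\si(F)$ gets applied to the first factor exactly $r-1$ times strictly inside the range and once more in the wrap-around from $k=rp$ back to $k=1$; collecting these, the total twist accumulated upon going once around the length-$rp$ word is $\si(F)^{r}=\si(F^{r})$. So I would set the isomorphism to send $\epsilon_{k,s}$ to a suitably normalized multiple of $e^t_{k,s}$ in $V^t(\om,w,F^{r})$ — concretely, on the block $1\le k\le p$ one uses $\epsilon_{k,s}\mapsto e^t_{k,s}$, and on each subsequent block $jp+1\le k\le (j+1)p$ one applies a compensating power $\si^{(\cdot)}(F)^{j}$ (or, dually, builds the correction into the definition of $\epsilon$ as in the proof of Lemma \ref{lemma:fakecirc}) so that the single seam at $k=rp$ absorbs exactly the leftover factor $\si(F^{r})$ demanded by \eqref{eq:newVomwFX}, and correspondingly $F^{-r}$ for $Y$ in \eqref{eq:newVomwFY}. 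A clean way to phrase this is: define $\eta_{k,s}$ for $1\le k\le rp$ by $\eta_{k,s}=\si^{k}(F)^{\lfloor (k-1)/p\rfloor}e^t_{k,s}$ — the precise exponent to be pinned down — and verify the two action formulas directly.

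The routine verification splits into the non-seam letters ($w_k\in\{1,x\}$ giving $X$-arrows, $w_{k-1}\in\{1,y\}$ giving $Y$-arrows, and the letters $0$ killed on both sides), which is bookkeeping, and the single seam, which is where the exponent $r$ genuinely enters. The main obstacle is purely notational: keeping the index-mod-$p$ reductions, the $\si^{(\cdot)}$-twists on the various copies of $\K^d$, and the placement of the scalars $\si(t)$ consistent, so that the one accumulated factor $\si(F^{r})$ lands in the right spot and no spurious powers of $\si(F)$ survive on the interior arrows. I would also double check the degenerate cases — $r=1$ (where the statement reduces to Lemma \ref{lemma:tens-FG} with $G=I_1$ after recalling $F\otimes I_1 = F$) and words $w$ containing a letter $0$ (where both sides split via Lemma \ref{lemma:fakecirc} and Remark \ref{rem:decomp-type1}, so the identity follows from the $r=1$ non-seam case block by block) — to make sure the formula is stated for the right range of $w$.
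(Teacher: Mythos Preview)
Your proposal is correct and takes essentially the same approach as the paper: the paper builds the compensating twist directly into the tensor-product basis via
\[
\epsilon_{k+\ell p,s}:=(\sigma^k(F^\ell)\cdot e^1_{k,s})\otimes e^{t}_{k+\ell p,1},\qquad 1\leq k\leq p,\ 0\leq\ell\leq r-1,
\]
which is exactly your ``build the correction into the definition of $\epsilon$'' option with exponent $\ell=\lfloor (k-1)/p\rfloor$, and then checks the $X$-action case by case with the seam at $k=rp$ producing the factor $\sigma(F^r)$. Your discussion of the degenerate cases ($r=1$, letters $0$) is extra caution the paper omits, but harmless.
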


\begin{proof}
Consider the basis $\{e^1_{k,s}~|~1\leq k\leq p,~1\leq s\leq d\}$ of $V^1(\om,1^p, F)$ and the basis $\{e^{t}_{k',1}~|~1\leq k'\leq rp \}$ of $V^{t'}(\om,w, x-1)$. 
Since $\sigma^k(F)\in \Aut_{R/\sigma^k(\mf_0)}(\oplus_{s=1}^d (R/\sigma^k(\mf_0))e^1_{k,s})$, we can define a basis for $V^1(\om,1^p,F)\otimes_R V^t(\om,w,x-1)$ as
$$\epsilon_{k+\ell p,s}:=(\sigma^k(F^\ell)\cdot e^1_{k,s})\otimes e^{t}_{k+\ell p,1},\qquad 1\leq k\leq p,~0\leq\ell\leq r-1.$$

The action of $X$ on this basis is given by
\begin{align*}
X \epsilon_{k+\ell p,s}&= X (\sigma^k(F^\ell)\cdot e^1_{k,s})\otimes X e^{t}_{k+\ell p,1} \\
&= \sigma^{k+1}(F^\ell)\cdot X e^1_{k,s}\otimes X e^{t}_{k+\ell p,1}\\
&= \begin{cases}
\sigma^{k+1}(F^\ell)\cdot e^1_{k+1,s}\otimes\si(t) e^t_{k+\ell p+1,s},&\text{if $k\neq p$, $w_k=1$},\\
\sigma^{k+1}(F^\ell)\cdot e^1_{k+1,s}\otimes e^t_{k+\ell p+1,s},&\text{if $k\neq p$, $w_k=x$},\\
\sigma^{p+1}(F^\ell)\cdot \si(F)e^1_{1,s}\otimes\si(t) e^t_{p+\ell p+1,s},&\text{if $k= p$, $\ell\neq r-1$, $w_k=1$},\\
\sigma^{p+1}(F^\ell)\cdot \si(F)e^1_{1,s}\otimes e^t_{p+\ell p+1,s},&\text{if $k= p$, $\ell\neq r-1$, $w_k=x$},\\
\sigma^{p+1}(F^{r-1})\cdot \si(F)e^1_{1,s}\otimes\si(t) e^t_{1,s},&\text{if $k= p$, $\ell= r-1$, $w_k=1$},\\
\sigma^{p+1}(F^{r-1})\cdot \si(F)e^1_{1,s}\otimes e^t_{1,s},&\text{if $k= p$, $\ell= r-1$, $w_k=x$},\\
0,&\text{ if $w_k\in\{y,0\}$};
\end{cases}\\
&= \begin{cases}
\si(t) (\sigma^{k+1}(F^\ell)\cdot e^1_{k+1,s}\otimes e^t_{k+\ell p+1,s}),&\text{if $k\neq p$, $w_k=1$},\\
\sigma^{k+1}(F^\ell)\cdot e^1_{k+1,s}\otimes e^t_{k+\ell p+1,s},&\text{if $k\neq p$, $w_k=x$},\\
\si(t)(\sigma(F^{\ell+1})\cdot e^1_{1,s}\otimes e^t_{1+(\ell+1)p,s}),&\text{if $k= p$, $\ell\neq r-1$, $w_k=1$},\\
\sigma(F^{\ell+1})\cdot e^1_{1,s}\otimes e^t_{1+(\ell+1)p,s},&\text{if $k= p$, $\ell\neq r-1$, $w_k=x$},\\
\si(t)(\sigma(F^{r})\cdot e^1_{1,s}\otimes e^t_{1,s}),&\text{if $k= p$, $\ell= r-1$, $w_k=1$},\\
\sigma(F^{r})\cdot e^1_{1,s}\otimes e^t_{1,s},&\text{if $k= p$, $\ell= r-1$, $w_k=x$},\\
0,&\text{ if $w_k\in\{y,0\}$};
\end{cases}\\
&= \begin{cases}
\si(t) \epsilon_{k+\ell p+1,s},&\text{if $k\neq p$, $w_k=1$},\\
\epsilon_{k+\ell p+1,s},&\text{if $k\neq p$, $w_k=x$},\\
\si(t)\epsilon_{k+\ell p+1,s},&\text{if $k= p$, $\ell\neq r-1$, $w_k=1$},\\
\epsilon_{k+\ell p+1,s},&\text{if $k= p$, $\ell\neq r-1$, $w_k=x$},\\
\si(t)\sigma(F^{r})\cdot \epsilon_{1,s},&\text{if $k= p$, $\ell= r-1$, $w_k=1$},\\
\sigma(F^{r})\cdot \epsilon_{1,s},&\text{if $k= p$, $\ell= r-1$, $w_k=x$},\\
0,&\text{ if $w_k\in\{y,0\}$};
\end{cases}
\end{align*}
where in the fifth and sixth cases we identified $F^r$ with $F^{r}\otimes I_1$ as a linear transformation on $\K^d=\K^d\otimes \K e^{t}_{rp,1}$.
An analogous computation for the action of $Y$ gives the desired result.
\end{proof}

\begin{Definition}
If $f\in\K[x]$, we define a new polynomial $f^{[n]}$ as follows: in a splitting field of $f$, write $f=a\prod_{i=1}^m(x-\alpha_i)$, then
\[f^{[n]}=a^n\prod_{i=1}^m(x-\alpha_i^n). \]
Notice that, since the coefficients of $f^{[n]}$ are symmetric polynomials in the $\alpha_i^n$'s, in particular they are also symmetric polynomials in the $\alpha_i$'s, hence they can be written as polynomials in the elementary symmetric functions of the $\alpha_i$'s which are the coefficients of $f$. It follows that $f^{[n]}\in \K[x]$.
\end{Definition}

\begin{Remark}\label{rem:Fpower}
If $f\in\K[x]$, then $(F_f)^n\sim F_{f^{[n]}}$.
\end{Remark}

\begin{Corollary}\label{cor:tens-f-x-1}
Let $w=w_1\cdots w_{rp}\in \mathbf{D}^t_\om$, and let $f\in\K[x]$, then
\[V^1(\om,1^p,f)\otimes_R V^t(\om,w,x-1)\simeq V^t(\om,w,f^{[r]}).\]
\end{Corollary}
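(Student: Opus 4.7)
The plan is to obtain this as a direct corollary of Lemma \ref{lem:nobr-tens-2nd} combined with Remark \ref{rem:Fpower}. First, I would apply Lemma \ref{lem:nobr-tens-2nd} with $F=F_f$, which by definition of $V^1(\om,1^p,f)=V^1(\om,1^p,F_f)$ yields
\[V^1(\om,1^p,f)\otimes_R V^t(\om,w,x-1)\simeq V^t(\om,w,(F_f)^{r}).\]
Next, Remark \ref{rem:Fpower} identifies $(F_f)^r$ with $F_{f^{[r]}}$ up to conjugation in $\Aut_\K(\K^d)$, where $d=\deg f$.

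The only remaining issue is that the construction $V^t(\om,w,F)$ in Definition \ref{def:finorb-circle} is made from a specific matrix $F$, not from a similarity class, so I must observe that similar matrices give isomorphic modules. Concretely, if $P\in\Aut_\K(\K^d)$ and $F'=PFP^{-1}$, then the $R$-linear map sending $e^t_{k,s}\mapsto \sum_{s'} P_{s,s'}e^t_{k,s'}$ (extended using $\si^k(P)$ on the weight space at $\si^k(\mf_0)$) intertwines the two $A(t)$-actions. This is straightforward from the defining formulas \eqref{eq:newVomwFX}--\eqref{eq:newVomwFY}, since $P$ only appears in the ``wrap-around'' arrow from index $rp$ to index $1$, and the conjugation $F\mapsto PFP^{-1}$ is precisely what is needed to make the basis change compatible there. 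Combining these steps gives
\[V^t(\om,w,(F_f)^{r}) \simeq V^t(\om,w,F_{f^{[r]}}) = V^t(\om,w,f^{[r]}),\]
where the last equality is the convention in Definition \ref{def:finorb-circle}.

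No step here is a genuine obstacle; the potential pitfall is simply making sure the similarity-invariance of $V^t(\om,w,F)$ in its third argument is clearly invoked, since this is the only nontrivial content beyond Lemma \ref{lem:nobr-tens-2nd} and Remark \ref{rem:Fpower}. For that reason I would expect the proof in the paper to be no more than one or two sentences, essentially just citing the lemma and the remark.
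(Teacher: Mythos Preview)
Your proposal is correct and matches the paper's approach exactly: the paper's proof is a single sentence stating that the result follows immediately from Lemma \ref{lem:nobr-tens-2nd} and Remark \ref{rem:Fpower}. Your additional remark on similarity-invariance of $V^t(\om,w,F)$ in $F$ is implicit in the paper's conventions and is a helpful clarification, but not a departure from the intended argument.
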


\begin{proof}
This follows immediately from Lemma \ref{lem:nobr-tens-2nd} and Remark \ref{rem:Fpower}.
\end{proof}

\begin{Definition}
Let $u,t, t'\in R$, $w=w_1\cdots w_{rp}\in \mathbf{D}^t_\om$, $w'=w'_1\cdots w'_{r'p}\in \mathbf{D}^{t'}_\om$ such that $(r,r')=d$. Define
\[u^{w,w'}:=\prod_{\substack{1\leq j\leq rr'p/d \\ w_j=1,~w'_j=x}}\si^{p+1-j}(u)\]
where the index $j$ in $w_j$ (resp. $w'_j$) should be interpreted modulo $rp$ (resp. $r'p$). 
\end{Definition}

\begin{Definition}\label{def:monoid}
We equip the set $\{0,1,x,y\}$ with the structure of a monoid by defining a binary commutative operation $\cdot$ such that $z\cdot z=z$, $z\cdot 0=0$, $z\cdot 1=z$, $x\cdot y=0$, for all $z\in \{0,1,x,y\}$.
\end{Definition}

\begin{Definition}\label{def:tensor-word}
Let $w=w_1\cdots w_{rp}\in\mathbf{D}^t_\om$, $w'=w'_1\cdots w'_{r'p}\in\mathbf{D}^{t'}_\om$ with $\gcd(r,r')=d$. 
We define $w\otimes w'\in\mathbf{D}^{tt'}_\om$ as follows
\[w\otimes w'=(w_1\cdot w'_{1})(w_2\cdot w'_{2})\ldots (w_{rr'p/d}\cdot w'_{rr'p/d})\]
where the product $\cdot$ in each parenthesis is given in Definition \ref{def:monoid}. 
The indices for $w_k$ (resp. $w'_k$) in the formula should be interpreted modulo $rp$ (resp. $r'p$).
\end{Definition}
\begin{Definition}\label{def:word-shift}
Let $w=w_1\cdots w_{rp}\in\mathbf{D}^t_\om$, for $j\in\Z$, define $w[j]=z_1\cdots z_{rp}\in\mathbf{D}^t_\om$ where $z_k=w_{k+jp}$ (with the index interpreted modulo $rp$).
\end{Definition}

\begin{Lemma}\label{lem:2nd-kind-tensor}
Let $w=w_1\cdots w_{rp}\in\mathbf{D}^t_\om$, $w'=w'_1\cdots w'_{r'p}\in\mathbf{D}^{t'}_\om$ with $\gcd(r,r')=d$. Then
\[V^t(\om,w,x-1)\otimes_R V^{t'}(\om,w',x-1)\simeq \bigoplus_{j=0}^{d-1}V^{tt'}(\om, w\otimes (w'[j]),x-t^{w,w'[j]}t'^{w'[j],w}).\]
\end{Lemma}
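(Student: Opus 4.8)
The plan is to imitate the proof of Lemma \ref{lem:nobr-tens-2nd}, working directly with explicit bases and the diagonal action of $X$ and $Y$ from Theorem \ref{thm:coprod}(a). Start with the basis $\{e^t_{k,1}\mid 1\le k\le rp\}$ of $V^t(\om,w,x-1)$ and $\{e^{t'}_{k',1}\mid 1\le k'\le r'p\}$ of $V^{t'}(\om,w',x-1)$, so that $V^t(\om,w,x-1)\otimes_R V^{t'}(\om,w',x-1)$ has basis given by the vectors $e^t_{k,1}\otimes e^{t'}_{k',1}$ with $k\equiv k'\pmod p$ (by Lemma \ref{lemma:weightmod}, only matching weight spaces survive, since the support is the whole orbit in both factors). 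Reindex: for each residue $a\in\{1,\dots,p\}$ the pairs $(k,k')$ with $k\equiv k'\equiv a$ are parametrized by $\Z/rp\times\Z/r'p$ restricted to that residue, which is a free $\Z/(rr'p/d)$-set of size... more precisely the orbit of $X$-action wraps around after $\operatorname{lcm}(rp,r'p)=rr'p/d$ steps, and the set of $X$-orbits splits into $d$ pieces indexed by $j=0,\dots,d-1$, where $j$ records the relative offset $k'-k$ in units of $p$ modulo $\gcd$. This is exactly the combinatorial source of the direct sum $\bigoplus_{j=0}^{d-1}$ and of the shift $w'[j]$ in Definition \ref{def:word-shift}.

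Next, compute the action of $X$ on $e^t_{k,1}\otimes e^{t'}_{k',1}$. By the relations \eqref{eq:newVomwFX}, $X$ multiplies by $\si(t)$ when $w_k=1$ and by $1$ when $w_k=x$ (and kills the vector when $w_k\in\{y,0\}$, which does not occur here since both words lie in $\mathbf D^\bullet_\om$ with all letters in $\{1,x\}$); similarly $X$ on the second factor multiplies by $\si(t')$ when $w'_{k'}=1$ and by $1$ when $w'_{k'}=x$. So on the tensor product $X$ multiplies by $\si(t)^{[w_k=1]}\si(t')^{[w'_{k'}=1]}$ and moves $(k,k')\mapsto(k+1,k'+1)$. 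The letter of the tensor word $w\otimes w'$ in position $k$ should be $x$ precisely when $X$ acts invertibly there, i.e. when $w_k=x$ and $w'_{k'}=x$; it should be $1$ when both are $1$; and the remaining mixed cases $w_k=1,w'_{k'}=x$ (or vice versa) are where $X$ multiplies by exactly one factor of $\si(t)$ (resp. $\si(t')$) — these will not be letters of the word but will instead be absorbed into the eigenvalue $F$ after a change of basis, exactly as the factors $\si^{p+1-j}(u)$ appear in $u^{w,w'}$. Note Definition \ref{def:monoid} is rigged so that $1\cdot 1=1$, $x\cdot x=x$, $1\cdot x=x$, $x\cdot y=0$, matching this bookkeeping; one must check no $0$ letter is produced (true since all letters are in $\{1,x\}$).

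The key technical step, and the main obstacle, is the change of basis that simultaneously (i) straightens out the scalars $\si(t)$ appearing in the "mixed" positions so that the module is literally in the form $V^{tt'}(\om,w\otimes(w'[j]),F)$ of Definition \ref{def:finorb-circle}, and (ii) identifies the resulting $F$ as the companion matrix eigenvalue $x-t^{w,w'[j]}t'^{w'[j],w}$ — i.e. a one-dimensional $F$ equal to the product of the leftover $t$'s and $t'$'s, suitably twisted by powers of $\si$. Concretely, for each of the $d$ summands one defines $\ep_{k}:=c_k\,(e^t_{k,1}\otimes e^{t'}_{k+jp,1})$ for appropriate scalars $c_k\in\bigl(R/\si^k(\mf_0)\bigr)^\times$ chosen so that $X\ep_k=\ep_{k+1}$ whenever the tensor-word letter in position $k$ is $x$ and $X\ep_k=\si(tt')\ep_{k+1}$ whenever it is $1$; solving these recursions around the cycle of length $rr'p/d$ forces a holonomy term when one returns to the start, and that holonomy is precisely $\si(t^{w,w'[j]}t'^{w'[j],w})$ acting on the one-dimensional space — giving $F=F_{x-t^{w,w'[j]}t'^{w'[j],w}}$. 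The bookkeeping on which positions $j$ in the product $u^{w,w'}$ receive a factor $\si^{p+1-j}(u)$ is dictated by the "$w_j=1,w'_j=x$" condition in the definition, and one must check the two products $t^{w,w'[j]}$ and $t'^{w'[j],w}$ together account for all mixed positions (the first handles $w_k=1,w'_k=x$, the second handles $w_k=x,w'_k=1$). Finally, verify by an identical computation that $Y$ acts correctly on the $\ep_k$'s — by \eqref{eq:newVomwFY} this is automatic once $X$ is checked, since $YX=tt'$ holds on the tensor product and determines the $Y$-action from the $X$-action on each weight line. Checking that the $d$ submodules so constructed are independent and span the whole tensor product (a dimension count: each has dimension $d\cdot rr'p/d\cdot\frac1{?}$... more simply, $\dim V^{tt'}(\om,w\otimes(w'[j]),\bullet)=rr'p/d$ for each $j$, totalling $rr'p$, which matches $\dim\bigl(V^t(\om,w,x-1)\otimes_R V^{t'}(\om,w',x-1)\bigr)=\frac1p\cdot rp\cdot r'p = rr'p/p\cdot p$... one checks the weight-space dimensions agree, using Lemma \ref{lemma:weightmod}) completes the proof.
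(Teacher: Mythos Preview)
Your approach is essentially the same as the paper's: define rescaled basis vectors $\epsilon^j_k$ for $0\le j\le d-1$ and $1\le k\le rr'p/d$, compute the $X$-action case by case, and identify the holonomy around the cycle with the scalar $t^{w,w'[j]}t'^{w'[j],w}$. The paper even writes down explicitly the scalars $c_k$ you leave implicit, namely
\[
\epsilon_k^j=\prod_{\substack{1\le i\le k-1\\ w_i=1,\,w'[j]_i=x}}\si^{k-i}(t)\,\prod_{\substack{1\le i'\le k-1\\ w_{i'}=x,\,w'[j]_{i'}=1}}\si^{k-i'}(t')\; e^t_{\bar k,1}\otimes e^{t'}_{\bar{k'},1},
\]
so your recursive description of the change of basis is correct in spirit and matches what the paper does.

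There is, however, a genuine gap in scope. You assert that ``both words lie in $\mathbf D^\bullet_\om$ with all letters in $\{1,x\}$'', and you use this both to rule out any letter $0$ in $w\otimes w'[j]$ and to justify the shortcut that the $Y$-action is determined by $YX=tt'$. This is false: the lemma is stated for arbitrary $w\in\mathbf D^t_\om$ and $w'\in\mathbf D^{t'}_\om$, whose letters may be $y$ or $0$ at break positions, and indeed $w\otimes w'[j]$ can contain $0$'s (this is exactly what happens in the later applications, e.g.\ in the proof of Theorem~\ref{thm:split-quotient}). The paper's computation therefore carries an ``otherwise; $0$'' branch at every step, and it verifies the $Y$-action by a parallel explicit computation rather than by inverting $X$, since $X$ is not bijective on weight spaces when a $y$ or $0$ letter is present. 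Your argument as written proves only the special case where both words are built from $\{1,x\}$; to cover the full statement you must keep the $y$ and $0$ cases throughout and check $Y$ directly.
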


\begin{proof}
Recall the basis elements $\{e^t_{k,1}\}_{k=1}^{rp}$ of $V^t(\om,w,x-1)$ and $\{e^{t'}_{k,1}\}_{k=1}^{r'p}$ of $V^{t'}(\om,w',x-1)$. Let $0\leq j\leq d-1$ and, for all $1\leq k\leq rr'p/d$ define a basis element $\epsilon_k^j$ of $V^t(\om,w,x-1)\otimes_R V^{t'}(\om,w',x-1)$ as follows
$$\epsilon_k^j:=\prod_{\substack{1\leq i\leq k-1 \\ w_i=1,~w'[j]_i=x}}\si^{k-i}(t)\prod_{\substack{1\leq i'\leq k-1 \\ w_{i'}=x,~w'[j]_{i'}=1}}\si^{k-i'}(t')e^t_{\bar{k},1}\otimes e^{t'}_{\bar{k'},1}$$
where $1\leq \bar{k}\leq rp$ is such that $k\equiv \bar{k}\mod {rp}$, and $1\leq \bar{k'}\leq r'p$ is such that $k+jp\equiv \bar{k'}\mod {r'p}$.

We claim that the module generated by $\{\epsilon_k^j\}_{k=1}^{rr'p/d}$ is isomorphic to $V^{tt'}(\om, w\otimes (w'[j]),x-t^{w,w'[j]}t'^{w'[j],w})$.

When $1\leq k<rr'p/d$, the action of $X$ on this basis is given by
\begin{align*}
X \epsilon^{j}_{k}&= X\prod_{\substack{1\leq i\leq k-1 \\ w_i=1,~w'[j]_i=x}}\si^{k-i}(t)\prod_{\substack{1\leq i'\leq k-1 \\ w_{i'}=x,~w'[j]_{i'}=1}}\si^{k-i'}(t')e^t_{\bar{k},1}\otimes e^{t'}_{\bar{k'},1} \\
&=\prod_{\substack{1\leq i\leq k-1 \\ w_i=1,~w'[j]_i=x}}\si^{k+1-i}(t)\prod_{\substack{1\leq i'\leq k-1 \\ w_{i'}=x,~w'[j]_{i'}=1}}\si^{k+1-i'}(t') X e^t_{\bar{k},1}\otimes X e^{t'}_{\bar{k'},1}\\
&= \begin{cases}\displaystyle
\prod_{\substack{1\leq i\leq k-1 \\ w_i=1,~w'[j]_i=x}}\si^{k+1-i}(t)\prod_{\substack{1\leq i'\leq k-1 \\ w_{i'}=x,~w'[j]_{i'}=1}}\si^{k+1-i'}(t') \sigma(t) e^t_{\bar{k+1},1}\otimes \sigma(t') e^{t'}_{\bar{k'+1},1},&\text{if $w_k=w'[j]_k=1$},\\
\displaystyle \prod_{\substack{1\leq i\leq k-1 \\ w_i=1,~w'[j]_i=x}}\si^{k+1-i}(t)\prod_{\substack{1\leq i'\leq k-1 \\ w_{i'}=x,~w'[j]_{i'}=1}}\si^{k+1-i'}(t') \sigma(t) e^t_{\bar{k+1},1}\otimes e^{t'}_{\bar{k'+1},1},&\text{if $w_k=1$, $w'[j]_k=x$},\\
\displaystyle \prod_{\substack{1\leq i\leq k-1 \\ w_i=1,~w'[j]_i=x}}\si^{k+1-i}(t)\prod_{\substack{1\leq i'\leq k-1 \\ w_{i'}=x,~w'[j]_{i'}=1}}\si^{k+1-i'}(t')  e^t_{\bar{k+1},1}\otimes \sigma(t') e^{t'}_{\bar{k'+1},1},&\text{if $w_k=x$, $w'[j]_k=1$},\\
\displaystyle \prod_{\substack{1\leq i\leq k-1 \\ w_i=1,~w'[j]_i=x}}\si^{k+1-i}(t)\prod_{\substack{1\leq i'\leq k-1 \\ w_{i'}=x,~w'[j]_{i'}=1}}\si^{k+1-i'}(t')  e^t_{\bar{k+1},1}\otimes e^{t'}_{\bar{k'+1},1},&\text{if $w_k=w'[j]_k=x$},\\
0,&\text{otherwise};
\end{cases}\\
&= \begin{cases}
\displaystyle \sigma(tt') \prod_{\substack{1\leq i\leq k \\ w_i=1,~w'[j]_i=x}}\si^{k+1-i}(t)\prod_{\substack{1\leq i'\leq k \\ w_{i'}=x,~w'[j]_{i'}=1}}\si^{k+1-i'}(t') e^t_{\bar{k+1},1}\otimes e^{t'}_{\bar{k'+1},1},&\text{if $w_k=w'[j]_k=1$},\\
\displaystyle \prod_{\substack{1\leq i\leq k \\ w_i=1,~w'[j]_i=x}}\si^{k+1-i}(t)\prod_{\substack{1\leq i'\leq k \\ w_{i'}=x,~w'[j]_{i'}=1}}\si^{k+1-i'}(t') e^t_{\bar{k+1},1}\otimes e^{t'}_{\bar{k'+1},1},&\text{if $w_k=1$, $w'[j]_k=x$},\\
\displaystyle \prod_{\substack{1\leq i\leq k \\ w_i=1,~w'[j]_i=x}}\si^{k+1-i}(t)\prod_{\substack{1\leq i'\leq k \\ w_{i'}=x,~w'[j]_{i'}=1}}\si^{k+1-i'}(t')  e^t_{\bar{k+1},1}\otimes e^{t'}_{\bar{k'+1},1},&\text{if $w_k=x$, $w'[j]_k=1$},\\
\displaystyle \prod_{\substack{1\leq i\leq k \\ w_i=1,~w'[j]_i=x}}\si^{k+1-i}(t)\prod_{\substack{1\leq i'\leq k \\ w_{i'}=x,~w'[j]_{i'}=1}}\si^{k+1-i'}(t')  e^t_{\bar{k+1},1}\otimes e^{t'}_{\bar{k'+1},1},&\text{if $w_k=w'[j]_k=x$},\\
0,&\text{otherwise};
\end{cases}\\
&= \begin{cases}
\sigma(tt') \epsilon^j_{k+1} &\text{if $(w\otimes w'[j])_k=1$} \\
\epsilon^j_{k+1} &\text{if $(w\otimes w'[j])_k=x$} \\
0,&\text{otherwise}.
\end{cases}
\end{align*}

When $k=rr'p/d$, we have

\begin{align*}
X \epsilon^{j}_{rr'p/d}&= X\prod_{\substack{1\leq i\leq rr'p/d-1 \\ w_i=1,~w'[j]_i=x}}\si^{rr'p/d-i}(t)\prod_{\substack{1\leq i'\leq rr'p/d-1 \\ w_{i'}=x,~w'[j]_{i'}=1}}\si^{rr'p/d-i'}(t')e^t_{rp,1}\otimes e^{t'}_{r'p,1} \\
&=\prod_{\substack{1\leq i\leq rr'p/d-1 \\ w_i=1,~w'[j]_i=x}}\si^{p+1-i}(t)\prod_{\substack{1\leq i'\leq rr'p/d-1 \\ w_{i'}=x,~w'[j]_{i'}=1}}\si^{p+1-i'}(t') X e^t_{rp,1}\otimes X e^{t'}_{r'p,1}\\
&= \begin{cases}
\displaystyle \prod_{\substack{1\leq i\leq rr'p/d-1 \\ w_i=1,~w'[j]_i=x}}\si^{p+1-i}(t)\prod_{\substack{1\leq i'\leq rr'p/d-1 \\ w_{i'}=x,~w'[j]_{i'}=1}}\si^{p+1-i'}(t') \sigma(t) e^t_{1,1}\otimes \sigma(t') e^{t'}_{1,1},&\text{if $w_{rp}=w'[j]_{r'p}=1$},\\
\displaystyle \prod_{\substack{1\leq i\leq rr'p/d-1 \\ w_i=1,~w'[j]_i=x}}\si^{p+1-i}(t)\prod_{\substack{1\leq i'\leq rr'p/d-1 \\ w_{i'}=x,~w'[j]_{i'}=1}}\si^{p+1-i'}(t') \sigma(t) e^t_{1,1}\otimes e^{t'}_{1,1},&\text{if $w_{rp}=1$, $w'[j]_{r'p}=x$},\\
\displaystyle \prod_{\substack{1\leq i\leq rr'p/d-1 \\ w_i=1,~w'[j]_i=x}}\si^{p+1-i}(t)\prod_{\substack{1\leq i'\leq rr'p/d-1 \\ w_{i'}=x,~w'[j]_{i'}=1}}\si^{p+1-i'}(t')  e^t_{1,1}\otimes \sigma(t') e^{t'}_{1,1},&\text{if $w_{rp}=x$, $w'[j]_{r'p}=1$},\\
\displaystyle \prod_{\substack{1\leq i\leq rr'p/d-1 \\ w_i=1,~w'[j]_i=x}}\si^{p+1-i}(t)\prod_{\substack{1\leq i'\leq rr'p/d-1 \\ w_{i'}=x,~w'[j]_{i'}=1}}\si^{p+1-i'}(t')  e^t_{1,1}\otimes e^{t'}_{1,1},&\text{if $w_{rp}=w'[j]_{r'p}=x$},\\
0,&\text{otherwise};
\end{cases} \\
&= \begin{cases}
\displaystyle \sigma(tt') \prod_{\substack{1\leq i\leq rr'p/d \\ w_i=1,~w'[j]_i=x}}\si^{p+1-i}(t)\prod_{\substack{1\leq i'\leq rr'p/d \\ w_{i'}=x,~w'[j]_{i'}=1}}\si^{p+1-i'}(t') e^t_{1,1}\otimes e^{t'}_{1,1} & \text{if $(w\otimes w'[j])_{rr'p/d}=1$}, \\
\displaystyle \prod_{\substack{1\leq i\leq rr'p/d \\ w_i=1,~w'[j]_i=x}}\si^{p+1-i}(t)\prod_{\substack{1\leq i'\leq rr'p/d \\ w_{i'}=x,~w'[j]_{i'}=1}}\si^{p+1-i'}(t') e^t_{1,1}\otimes e^{t'}_{1,1} & \text{if $(w\otimes w'[j])_{rr'p/d}=x$},\\
0,&\text{otherwise};
\end{cases} \\
&= \begin{cases}
 \sigma(tt') t^{w,w'[j]}t'^{w'[j],w} \epsilon^j_1 & \text{if $(w\otimes w'[j])_{rr'p/d}=1$}, \\
 t^{w,w'[j]}t'^{w'[j],w} \epsilon^j_1 & \text{if $(w\otimes w'[j])_{rr'p/d}=x$}, \\
 0,&\text{otherwise}.
\end{cases}
\end{align*}
An analogous computation for the action of $Y$ proves the claim. By examining the indices involved in the basis elements, it is clear that the spaces spanned by $\{\epsilon_k^j\}_{k=1}^{rr'p/d}$ and $\{\epsilon_k^{j'}\}_{k=1}^{rr'p/d}$ with $0\leq j<j'\leq d-1$ intersect trivially, which gives the direct sum in the statement of the Lemma. Finally, by a dimension count we can conclude that the direct sum does give the whole tensor product, which finishes the proof.
\end{proof}

\begin{Lemma} \label{lem:1st-tensor-1st}
Let $0\leq i\leq i'\leq p-1$, $w=w_{i+1}\cdots w_{i+\ell}\in\mathbf{D}^{t}_\om$, $w'=w'_{i'+1}\cdots w'_{i'+\ell'}\in\mathbf{D}^{t'}_\om$, let $V=V^t(\om,i,w)$, $V'=V^{t'}(\om,i',w')$. Suppose that 
$\dim_{R/\si^{i'+1}(\mf_0)}(V_{\si^{i'+1}(\mf_0)})=c$ and $\dim_{R/\si^{i+1}(\mf_0)}(V'_{\si^{i+1}(\mf_0)})=c'$, then
\[ V\otimes_R V'=\bigoplus_{j=0}^{c-1} V^{tt'}(\om,i',(w')^{(j)})\oplus \bigoplus_{j=1}^{c'-\delta_{i,i'}} V^{tt'}(\om,i,w^{(j)})\]
where \[(w')^{(j)}=(w_{i'+1+jp}\cdot w'_{i'+1})( w_{i'+2+jp}\cdot w'_{i'+2})\ldots ( w_{i'+jp+\min\{i+\ell-i'-jp,\ell'\}}\cdot w'_{i'+\min\{i+\ell-i'-jp,\ell'\}});\]
\[w^{(j)}=(w_{i+1}\cdot w'_{i+1+jp})( w_{i+2}\cdot w'_{i+2+jp})\ldots (w_{i+\min\{\ell, i'+\ell'-i-jp\}}\cdot w'_{i+jp+\min\{\ell, i'+\ell'-i-jp\}}).\]

Here we adopt the convention that if $i+\ell=i'+jp$, (resp. $i'+\ell'=i+jp$) then $(w')^{(j)}=\emptyset$ (resp. $w^{(j)}=\emptyset$).
\end{Lemma}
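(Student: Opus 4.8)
The plan is to mimic the strategy of Lemma \ref{lem:2nd-kind-tensor}: choose a well-adapted basis of the tensor product, indexed so that each block of basis vectors spans a submodule isomorphic to one of the modules $V^{tt'}(\om,i',(w')^{(j)})$ or $V^{tt'}(\om,i,w^{(j)})$, then verify by a dimension count that these blocks exhaust the tensor product and intersect trivially. Concretely, recall that $V=V^t(\om,i,w)$ has basis $\{e^t_k\}_{k=i+1}^{i+\ell+1}$ supported on the ideals $\si^k(\mf_0)$ for $i+1\le k\le i+\ell+1$, and similarly for $V'=V^{t'}(\om,i',w')$ with basis $\{e^{t'}_{k}\}_{k=i'+1}^{i'+\ell'+1}$. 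By Lemma \ref{lemma:weightmod}, $(V\otimes_R V')_{\si^k(\mf_0)}=V_{\si^k(\mf_0)}\otimes_{\K}V'_{\si^k(\mf_0)}$, so the weight space at $\si^k(\mf_0)$ is nonzero precisely when $k$ lies in both supports modulo $p$, and its basis consists of the vectors $e^t_a\otimes e^{t'}_b$ with $a\equiv b\equiv k\pmod p$. The combinatorics of which pairs $(a,b)$ occur is exactly what produces the index ranges in the two families: pairs where $b$ runs through a full "lap" relative to $a$ give the $(w')^{(j)}$-summands (indexed by $j=0,\dots,c-1$, one for each time the $V'$-strand can be translated by $jp$ and still fit inside the $V$-strand), while the complementary pairs give the $w^{(j)}$-summands; the $\delta_{i,i'}$ correction accounts for the fact that when $i=i'$ the "diagonal" start $j=0$ has already been counted in the first family.

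For the module structure, the key computation is the action of $X$ and $Y$ on these tensor basis vectors. On $V$, $X$ sends $e^t_k\mapsto \si(t)e^t_{k+1}$ or $e^t_{k+1}$ or $0$ according to $w_k\in\{1,x,(y,0)\}$, and $0$ at the top $k=i+\ell+1$; likewise on $V'$. Therefore $X(e^t_a\otimes e^{t'}_b)=Xe^t_a\otimes Xe^{t'}_b$ is governed by the monoid product $w_a\cdot w'_b$ of Definition \ref{def:monoid} — it is nonzero iff neither factor is $0$ and the pair is not $x\cdot y$, and carries a scalar $\si(tt')$, $\si(t)$, $\si(t')$, or $1$ depending on the two letters. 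After rescaling the basis vectors by the appropriate products of powers of $\si$ applied to $t$ and $t'$ (exactly as the $\epsilon^j_k$ in Lemma \ref{lem:2nd-kind-tensor} were rescaled, and exactly as the letter $1$ vs. $x$ distinction is built into Definition \ref{def:type1mod}), the action becomes: multiply by $\si(tt')$ if the combined letter is $1$, by $1$ if it is $x$, and kill the vector if it is $0$ — which is precisely the defining action of $V^{tt'}(\om,\cdot,\cdot)$ with the combined word. One checks the $Y$-action symmetrically. The convention that an empty combined word gives a one-dimensional module with $X=Y=0$ matches the edge cases $i+\ell=i'+jp$ etc.

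The main obstacle is purely bookkeeping: getting the index shifts and the truncation by $\min$ exactly right, and in particular pinning down the count $c$ and $c'$ of summands in each family and the $\delta_{i,i'}$ correction. Since $V^t(\om,i,w)$ is a "segment" module (unlike the "circle" modules $V^t(\om,w,F)$, it does not wrap), when we tensor two segments the $V'$-strand, viewed inside $\MaxSpec(R)/\Z\cong\Z/p$, can be placed starting at $i'$ but also at $i'+p, i'+2p,\dots$ as long as it still overlaps the $V$-strand; each placement that keeps the \emph{start} of $V'$ inside $V$ contributes one module built on $w'$ truncated to the overlap (hence $c$ of them, where $c=\dim V_{\si^{i'+1}(\mf_0)}$ counts exactly these valid starting positions), and symmetrically the placements keeping the start of $V$ inside $V'$ give the $w^{(j)}$ family, with the single shared placement when $i=i'$ removed to avoid double-counting. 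I would verify the dimension identity $\dim_{\K}(V\otimes_R V')=\sum_k \dim V_{\si^k(\mf_0)}\dim V'_{\si^k(\mf_0)}$ equals the total dimension of the claimed right-hand side as the final consistency check, exactly as at the end of the proof of Lemma \ref{lem:2nd-kind-tensor}.
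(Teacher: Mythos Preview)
Your proposal is correct and follows essentially the same approach as the paper: the paper's proof likewise defines rescaled basis elements $\epsilon_k^j$ and $(\epsilon')_k^j$ (with the precise products of $\si^{k-m}(t)$ and $\si^{k-n}(t')$ you allude to), verifies the $X$- and $Y$-action case by case according to the monoid product of the letters, and concludes with the same trivial-intersection and dimension-count argument. Your intuitive description in terms of ``placements'' of one segment inside the other is exactly the combinatorics encoded by the paper's index ranges and the $\delta_{i,i'}$ correction.
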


\begin{proof}
Recall the basis elements $\{e^t_{k}\}_{k=i+1}^{i+\ell+1}$ of $V^t(\om,i,w)$ and $\{e^{t'}_{k}\}_{k=1}^{i'+\ell+1}$ of $V^{t'}(\om,i,w')$. Let $0\leq j\leq c-1$ and, for all $i'+1\leq k\leq i'+\min\{i+\ell-i'-jp,\ell'\}+1$ define a basis element $\epsilon_k^j$ of $V\otimes_R V'$ as follows
\[\epsilon_k^j:=\prod_{\substack{i'+1\leq m\leq k-1 \\ w_{m+jp}=1,~w'_m=x}}\si^{k-m}(t)\prod_{\substack{i'+1\leq n\leq k-1 \\ w_{n+jp}=x,~w'_{n}=1}}\si^{k-n}(t')e^t_{k+jp}\otimes e^{t'}_{k}.\]

We claim that the module generated by $\{\epsilon_k^j~|~i'+1\leq k\leq i'+\min\{i+\ell-i'-jp,\ell'\}+1\}$ is isomorphic to $V^{tt'}(\om,i',(w')^{(j)})$.

First of all, if $k=i'+\min\{i+\ell-i'-jp,\ell'\}+1$ then

\begin{align*}X \epsilon^{j}_{k}&= X\prod_{\substack{i'+1\leq m\leq k-1 \\ w_{m+jp}=1,~w'_m=x}}\si^{k-m}(t)\prod_{\substack{i'+1\leq n\leq k-1 \\ w_{n+jp}=x,~w'_{n}=1}}\si^{k-n}(t')e^t_{k+jp}\otimes e^{t'}_{k} \\
&=\prod_{\substack{i'+1\leq m\leq k-1 \\ w_{m+jp}=1,~w'_m=x}}\si^{k-m}(t)\prod_{\substack{i'+1\leq n\leq k-1 \\ w_{n+jp}=x,~w'_{n}=1}}\si^{k-n}(t')Xe^t_{k+jp}\otimes Xe^{t'}_{k} \\
& =0.
\end{align*}

For $i'+1\leq k< i'+\min\{i+\ell-i'-jp,\ell'\}+1$ (such an index can only exist if $i+\ell\neq i'+jp$, which means that $(w')^{(j)}\neq\emptyset$) we have:
\begin{align*}
X \epsilon^{j}_{k}&= X\prod_{\substack{i'+1\leq m\leq k-1 \\ w_{m+jp}=1,~w'_m=x}}\si^{k-m}(t)\prod_{\substack{i'+1\leq n\leq k-1 \\ w_{n+jp}=x,~w'_{n}=1}}\si^{k-n}(t')e^t_{k+jp}\otimes e^{t'}_{k} \\
&=\prod_{\substack{i'+1\leq m\leq k-1 \\ w_{m+jp}=1,~w'_m=x}}\si^{k-m}(t)\prod_{\substack{i'+1\leq n\leq k-1 \\ w_{n+jp}=x,~w'_{n}=1}}\si^{k-n}(t')Xe^t_{k+jp}\otimes Xe^{t'}_{k}\\
&= \begin{cases}\displaystyle
\prod_{\substack{i'+1\leq m\leq k-1 \\ w_{m+jp}=1,~w'_m=x}}\si^{k-m}(t)\prod_{\substack{i'+1\leq n\leq k-1 \\ w_{n+jp}=x,~w'_{n}=1}}\si^{k-n}(t')\sigma(t)e^t_{k+jp+1}\otimes \sigma(t')e^{t'}_{k+1},&\text{if $w_{k+jp}=w'_k=1$},\\
\displaystyle \prod_{\substack{i'+1\leq m\leq k-1 \\ w_{m+jp}=1,~w'_m=x}}\si^{k-m}(t)\prod_{\substack{i'+1\leq n\leq k-1 \\ w_{n+jp}=x,~w'_{n}=1}}\si^{k-n}(t')\sigma(t)e^t_{k+jp+1}\otimes e^{t'}_{k+1},&\text{if $w_{k+jp}=1$, $w'_k=x$},\\
\displaystyle \prod_{\substack{i'+1\leq m\leq k-1 \\ w_{m+jp}=1,~w'_m=x}}\si^{k-m}(t)\prod_{\substack{i'+1\leq n\leq k-1 \\ w_{n+jp}=x,~w'_{n}=1}}\si^{k-n}(t') e^t_{k+jp+1}\otimes \sigma(t')e^{t'}_{k+1},&\text{if $w_{k+jp}=x$, $w'_k=1$},\\
\displaystyle \prod_{\substack{i'+1\leq m\leq k-1 \\ w_{m+jp}=1,~w'_m=x}}\si^{k-m}(t)\prod_{\substack{i'+1\leq n\leq k-1 \\ w_{n+jp}=x,~w'_{n}=1}}\si^{k-n}(t') e^t_{k+jp+1}\otimes e^{t'}_{k+1},&\text{if $w_{k+jp}=w'_k=x$},\\
0,&\text{otherwise};
\end{cases} \\
&= \begin{cases}
\displaystyle \sigma(tt') \prod_{\substack{i'+1\leq m\leq k \\ w_{m+jp}=1,~w'_m=x}}\si^{k-m}(t)\prod_{\substack{i'+1\leq n\leq k \\ w_{n+jp}=x,~w'_{n}=1}}\si^{k-n}(t')e^t_{k+jp+1}\otimes e^{t'}_{k+1},&\text{if $w_{k+jp}=w'_k=1$},\\
\displaystyle \prod_{\substack{i'+1\leq m\leq k \\ w_{m+jp}=1,~w'_m=x}}\si^{k-m}(t)\prod_{\substack{i'+1\leq n\leq k \\ w_{n+jp}=x,~w'_{n}=1}}\si^{k-n}(t')\sigma(t)e^t_{k+jp+1}\otimes e^{t'}_{k+1},&\text{if $w_{k+jp}=1$, $w'_k=x$},\\
\displaystyle \prod_{\substack{i'+1\leq m\leq k \\ w_{m+jp}=1,~w'_m=x}}\si^{k-m}(t)\prod_{\substack{i'+1\leq n\leq k \\ w_{n+jp}=x,~w'_{n}=1}}\si^{k-n}(t') e^t_{k+jp+1}\otimes \sigma(t')e^{t'}_{k+1},&\text{if $w_{k+jp}=x$, $w'_k=1$},\\
\displaystyle \prod_{\substack{i'+1\leq m\leq k \\ w_{m+jp}=1,~w'_m=x}}\si^{k-m}(t)\prod_{\substack{i'+1\leq n\leq k \\ w_{n+jp}=x,~w'_{n}=1}}\si^{k-n}(t') e^t_{k+jp+1}\otimes e^{t'}_{k+1},&\text{if $w_{k+jp}=w'_k=x$},\\
0,&\text{otherwise};
\end{cases}\\
&= \begin{cases}
\sigma(tt') \epsilon^j_{k+1} &\text{if $(w')^{(j)}_k=1$} \\
\epsilon^j_{k+1} &\text{if $(w')^{(j)}_k=x$} \\
0,&\text{otherwise}.
\end{cases}
\end{align*}
An analogous computation for the action of $Y$ proves the claim. 

We also define, for all $1\leq j\leq c'-\delta_{i,i'}$, and $i+1\leq k\leq i+\min\{\ell,i'+\ell'-i-jp\}+1$ a basis element $(\epsilon')_k^j$ of $V\otimes_R V'$ as follows
$$(\epsilon')_k^j:=\prod_{\substack{i'+1\leq m\leq k-1 \\ w_{m}=1,~w'_{m+jp}=x}}\si^{k-m}(t)\prod_{\substack{i'+1\leq n\leq k-1 \\ w_{n}=x,~w'_{n+jp}=1}}\si^{k-n}(t')e^t_{k}\otimes e^{t'}_{k+jp}.$$

With a computation of the action of $X$ and $Y$ analogous to the previous one, it can also be checked that the module generated by $\{(\epsilon')_k^j~|~i+1\leq k\leq i+\min\{\ell,i'+\ell'-i-jp\}+1\}$ is isomorphic to $V^{tt'}(\om,i,w^{(j)})$.

Finally, the result follows by examining the indices to check that it is a direct sum because the modules we defined intersect trivially, and by a dimension count that shows the isomorphism.
\end{proof}

\begin{Example}\label{ex:tensor}
$p=3$, $\om=\{\Fm_0, \Fm_1, \Fm_2\}$, $B_\om^t = \{\Fm_0,\Fm_2\}$, $B_\om^{t'}=\{\Fm_1,\Fm_2\}$. Let $i=2$, $w=x1x$, $i'=2$, $w'=1yx10x1$. Then we can compute $(w')^{(j)}$ and $w^{(j)}$ as follows: we line up the shifted words vertically and multiply each pair of vertically aligned letters according to the rules in Definition \ref{def:monoid}.

$$\left( \begin{tabular}{l} $x1x$ \\ $1yx10x1$ \\ \hline $xyx$\end{tabular}\right)\Rightarrow (w')^{(0)}=xyx, \qquad
\left( \begin{tabular}{l} $x1x$ \\ $\phantom{x1x}1yx10x1$ \\ \hline $\phantom{\emptyset}$    \end{tabular} \right) \Rightarrow (w')^{(1)}=\emptyset $$
$$ \left( \begin{tabular}{l} $\phantom{x1x}x1x$ \\ $1yx10x1$ \\ \hline $\phantom{x1x}x0x$\end{tabular}\right) \Rightarrow w^{(1)}=x0x, \qquad 
\left(\begin{tabular}{l} $\phantom{x1xx1x}x1x$ \\ $1yx10x1$ \\ \hline $\phantom{x1xx1x}x$   \end{tabular}\right) \Rightarrow w^{(2)}=x $$
Hence, by Lemma \ref{lem:1st-tensor-1st}, we have the following:
\begin{align*}V^t(\omega, 2, x1x) \otimes V^{t'}(\omega, 2,1yx10x1) &\simeq V^{tt'}(\om,2,xyx)\oplus V^{tt'}(\om,2,\emptyset)\oplus V^{tt'}(\om,2,x0x)\oplus V^{tt'}(\om,2,x) \\
&\simeq V^{tt'}(\om,2,xyx)\oplus V^{tt'}(\om,2,\emptyset)\oplus \left( V^{tt'}(\om,2,x)\oplus V^{tt'}(\om,1,x) \right) \\
& \phantom{\oplus\oplus}\oplus V^{tt'}(\om,2,x)  \\
&=V^{tt'}(\om,2,xyx)\oplus V^{tt'}(\om,2,\emptyset)\oplus V^{tt'}(\om,2,x)^{\oplus 2}\oplus V^{tt'}(\om,1,x). 
\end{align*}
Note that the second factor of the tensor product is decomposable, by Remark \ref{rem:decomp-type1},
\[V^{t'}(\omega, 2,1yx10x1)\simeq V^{t'}(\omega,2,1yx1)\oplus V^{t'}(\omega,1,x1)\]
so we could also have done the computation using
\begin{align*}V^t(\omega, 2, x1x) \otimes V^{t'}(\omega, 2,1yx10x1) &\simeq 
V^t(\omega, 2, x1x) \otimes \left(V^{t'}(\omega,2,1yx1)\oplus V^{t'}(\omega,1,x1) \right) \\
&\simeq \left(V^t(\omega, 2, x1x) \otimes V^{t'}(\omega,2,1yx1)\right) \oplus \left(V^t(\omega, 2, x1x) \otimes V^{t'}(\omega,1,x1)\right),
\end{align*}
and then computed each of those tensor products using the same method as above.
\end{Example}

\begin{Example}
Figure \ref{fig:tensor-product-example} gives a visual representation of the calculation in Example \ref{ex:tensor}.
\end{Example}

\begin{figure}
\begin{align*}
&
\begin{tikzpicture}[baseline=-.5ex, vertex/.style={draw,circle,fill=white,minimum size=5pt, inner sep=0pt}]
\node[vertex] (A) at (1,0) [label=below:\small $e^t_3$] {};
\node[vertex] (B) at ({cos(120)},{sin(120)}) [label=below:\small $e^t_4$] {};
\node[vertex] (C) at ({cos(240)},{sin(240)}) [label=below:\small $e^t_5$] {};
\node[vertex] (D) at (1.4,0) [label=above:\small $e^t_6$] {};
\draw[->] (A) edge[out=90, in=30] (B);
\draw[<->] (B) edge[out=180+30, in=150] (C);
\draw[->] (C) edge[out=180+150, in=180+90] (D);
\end{tikzpicture}
\otimes
\begin{tikzpicture}[baseline=-.5ex, vertex/.style={draw,circle,fill=white,minimum size=5pt, inner sep=0pt}]
\node[vertex] (A) at (1,0) [label=left:\small $e^{t'}_3$] {};
\node[vertex] (B) at ({cos(120)},{sin(120)}) [label=below:\small $e^{t'}_4$] {};
\node[vertex] (C) at ({cos(240)},{sin(240)}) [label=above:\small $e^{t'}_5$] {};
\node[vertex] (D) at (1.4,0) [label=below:\small $e^{t'}_6$] {};
\node[vertex] (E) at ({1.4*cos(120)},{1.4*sin(120)}) [label=left:\small $e^{t'}_7$] {};
\node[vertex] (F) at ({1.4*cos(240)},{1.4*sin(240)}) [label=below:\small $e^{t'}_8$] {};
\node[vertex] (G) at (1.8,0) [label=right:\small $e^{t'}_9$] {};
\node[vertex] (H) at ({1.8*cos(120)},{1.8*sin(120)}) [label=left:\small $e^{t'}_{10}$] {};
\draw[<->] (A) edge[out=90, in=30] (B);
\draw[<-] (B) edge[out=210, in=150] (C);
\draw[->] (C) edge[out=330, in=270] (D);
\draw[<->] (D) edge[out=90, in=30] (E);
\draw[->] (F) edge[out=330, in=270] (G);
\draw[<->] (G) edge[out=90, in=30] (H);
\end{tikzpicture}
\\&=
\begin{tikzpicture}[baseline=-.5ex, vertex/.style={draw,circle,fill=white,minimum size=5pt, inner sep=0pt}]
\node[vertex] (A) at (1,0) [label=left:\small $e^t_3\otimes e^{t'}_3$] {};
\node[vertex] (B) at ({cos(120)},{sin(120)}) [label=above:\small $e^t_4\otimes e^{t'}_4$] {};
\node[vertex] (C) at ({cos(240)},{sin(240)}) [label=below:\small $e^t_5\otimes e^{t'}_5$] {};
\node[vertex] (D) at (1.4,0) [label=above:\small $e^t_6\otimes e^{t'}_6$] {};
\draw[->] (A) edge[out=90, in=30] (B);
\draw[<-] (B) edge[out=210, in=150] (C);
\draw[->] (C) edge[out=330, in=270] (D);
\end{tikzpicture}
\oplus
\begin{tikzpicture}[baseline=-.5ex, vertex/.style={draw,circle,fill=white,minimum size=5pt, inner sep=0pt}]
\node[vertex] (A) at (1,0) [label=below:\small $e^t_3\otimes e^{t'}_6$] {};
\node[vertex] (B) at ({cos(120)},{sin(120)}) [label=below:\small $e^t_4\otimes e^{t'}_7$] {};
\draw[->] (A) edge[out=90, in=30] (B);
\end{tikzpicture}
\oplus
\begin{tikzpicture}[baseline=-.5ex, vertex/.style={draw,circle,fill=white,minimum size=5pt, inner sep=0pt}]
\node[vertex] (A) at (1,0) [label=below:\small $e^t_3\otimes e^{t'}_9$] {};
\node[vertex] (B) at ({cos(120)},{sin(120)}) [label=below:\small $e^t_4\otimes e^{t'}_{10}$] {};
\draw[->] (A) edge[out=90, in=30] (B);
\end{tikzpicture}
\oplus
\begin{tikzpicture}[baseline=-.5ex, vertex/.style={draw,circle,fill=white,minimum size=5pt, inner sep=0pt}]
\node[vertex] (A) at (1,0) [label=below:\small $e^t_6\otimes e^{t'}_3$] {};
\end{tikzpicture}
\oplus
\begin{tikzpicture}[baseline=-.5ex, vertex/.style={draw,circle,fill=white,minimum size=5pt, inner sep=0pt}]
\node[vertex] (A) at ({cos(240)},{sin(240)}) [label=above:\small $e^t_5\otimes e^{t'}_8$] {};
\node[vertex] (B) at (1.4,0) [label=above:\small $e^t_6\otimes e^{t'}_9$] {};
\draw[->] (A) edge[out=330, in=270] (B);
\end{tikzpicture}
\end{align*}
\caption{$V^t(\omega, 2, x1x) \otimes V^{t'}(\omega, 2,1yx10x1) \simeq V^{tt'}(\omega,2 ,xyx)\oplus V^{tt'}(\omega,2,x)^{\oplus 2}\oplus V^{tt'}(\omega,2,\emptyset)\oplus V^{tt'}(\omega,1,x)$ \newline
This computation can be done directly from the picture, as we now explain. We pick as starting point a basis elements that has no edge going counterclockwise to it, like $e^t_3$, and we match it with a basis element that has the same weight in the other tensor factor, like $e^{t'}_6$. Then we simultaneously trace the connected edges moving counterclockwise, with the rule (coming from Def. \ref{def:monoid}) that two arrows pointing in the same direction give the same arrow, two arrows pointing in opposite directions make the edge disappear, and a two headed arrow leaves the other arrow as result. We continue the chain until one of the two basis elements has no counterclockwise edge, so if starting as discussed above we would stop at $e^{t'}_7$ and get the second summand in the result. Then we repeat the process for all starting points and possible matches.} \label{fig:tensor-product-example}
\end{figure}

\begin{Lemma}\label{lem:1st-tensor-2nd}
Let $w=w_{i+1}\ldots w_{i+\ell}\in\mathbf{D}_\om^t$, $w'=w'_1\cdots w'_{rp}\in\mathbf{D}^{t'}_\om$,  then
\[ V^{t}(\om,i,w)\otimes_R V^{t'}(\om,w',x-1)=\bigoplus_{j=0}^{r-1}V^{tt'}(\om,i,w^{[j]})\]
where $w^{[j]}=(w_{i+1}\cdot w'_{i+1+jp})\ldots (w_{i+\ell}\cdot w'_{i+\ell+jp})$ and the indices for $w'$ are to be intended modulo $rp$.
\end{Lemma}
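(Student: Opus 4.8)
The plan is to mimic the structure of the proof of Lemma \ref{lem:2nd-kind-tensor}, but now using the ``type-1'' module $V^t(\om,i,w)$ as the first tensor factor. First I would recall the bases: $\{e^t_k\}_{k=i+1}^{i+\ell+1}$ for $V^t(\om,i,w)$ and $\{e^{t'}_{k,1}\}_{k=1}^{rp}$ for $V^{t'}(\om,w',x-1)$. For each $j$ with $0\le j\le r-1$ and each $i+1\le k\le i+\ell+1$, I would define a scaled tensor vector
\[
\epsilon_k^j := \prod_{\substack{i+1\le m\le k-1\\ w_m=1,~w'_{m+jp}=x}}\si^{k-m}(t)\prod_{\substack{i+1\le n\le k-1\\ w_n=x,~w'_{n+jp}=1}}\si^{k-n}(t')\;e^t_k\otimes e^{t'}_{\bar{k+jp},1},
\]
where $\bar{k+jp}$ denotes the representative modulo $rp$. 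The scalar factors are exactly the correction needed so that when $X$ (or $Y$) hits $e^t_k\otimes e^{t'}_{\,\cdot\,}$ — which acts either by $\si(t)$, by $1$, or by $\si(t')$, $1$ on each factor depending on whether the relevant letter of $w$, resp.\ $w'[j]$, is $1$ or $x$ — the product of the two becomes simply $\si(tt')$ or $1$ times the next basis vector $\epsilon_{k+1}^j$, with the appropriate $\si$-shift absorbed into the coefficient.

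The key computational step is then to verify, exactly as in the displayed case analysis of Lemma \ref{lem:2nd-kind-tensor} but one dimension simpler (there is no ``wrap-around'' since the type-1 module is a finite interval rather than a cycle), that
\[
X\epsilon_k^j=\begin{cases}\si(tt')\,\epsilon_{k+1}^j & \text{if }(w^{[j]})_k=1,\\ \epsilon_{k+1}^j & \text{if }(w^{[j]})_k=x,\\ 0 & \text{otherwise},\end{cases}
\qquad
Y\epsilon_k^j=\begin{cases}\epsilon_{k-1}^j & \text{if }(w^{[j]})_{k-1}\in\{1,y\},\\ 0 & \text{otherwise},\end{cases}
\]
together with the boundary cases $X\epsilon_{i+\ell+1}^j=0$ and $Y\epsilon_{i+1}^j=0$ (which hold because $\si^{i+\ell+1}(\mf_0)$ and $\si^{i+1}(\mf_0)$ are breaks for $tt'$, so the last/first letter of $w^{[j]}$ already forces zero in $V^t(\om,i,w)$). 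The key observation making the letter of $w^{[j]}$ come out right is the monoid rule of Definition \ref{def:monoid}: $1\cdot x=x$, $x\cdot x=x$, $1\cdot y=y$, $x\cdot y=0$, etc., which matches precisely which of the four scalar/combinatorial cases occurs. This identifies the span of $\{\epsilon_k^j\}_k$ with $V^{tt'}(\om,i,w^{[j]})$ for each fixed $j$.

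Finally I would check that the submodules for different $j$ intersect trivially — this is immediate from looking at the index of the second tensor factor, since for fixed $k$ the vectors $e^{t'}_{\,\cdot\,,1}$ appearing for distinct $j$ are distinct basis vectors of $V^{t'}(\om,w',x-1)$ — and then do a dimension count: $\sum_{j=0}^{r-1}\dim V^{tt'}(\om,i,w^{[j]})=r(\ell+1)=\dim V^t(\om,i,w)\cdot \dim_{\K} V^{t'}(\om,w',x-1)_{\mf}$ summed over the $\ell+1$ relevant weight spaces, which matches $\dim(V^t(\om,i,w)\otimes_R V^{t'}(\om,w',x-1))$ by Lemma \ref{lemma:weightmod}. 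The main obstacle is purely bookkeeping: keeping the index shifts modulo $rp$ consistent and making sure the $\si$-exponents in the scalar coefficients line up when passing from $\epsilon_k^j$ to $\epsilon_{k+1}^j$; once the coefficient convention above is fixed, the verification is a routine (if lengthy) case check entirely parallel to Lemma \ref{lem:2nd-kind-tensor}, and indeed one could alternatively deduce this lemma as the special case of Lemma \ref{lem:1st-tensor-1st} in which the second module is replaced by the ``uniform'' $V^{t'}(\om,w',x-1)$, but carrying out the direct computation is cleaner.
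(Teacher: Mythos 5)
Your proposal is correct and follows exactly the paper's own argument: the paper defines the very same scaled basis vectors $\epsilon_k^j$ (with identical correction factors involving $\si^{k-m}(t)$ and $\si^{k-n}(t')$), asserts that the verification of the $X$ and $Y$ actions is analogous to Lemmas \ref{lem:2nd-kind-tensor} and \ref{lem:1st-tensor-1st}, and concludes by trivial intersection of the summands and a dimension count. Your fleshed-out case check and the remark that one could alternatively reduce to Lemma \ref{lem:1st-tensor-1st} are consistent with, and add a bit of detail to, what the paper leaves implicit.
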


\begin{proof}
Recall the basis elements $\{e^t_{k}\}_{k=i+1}^{i+\ell+1}$ of $V^t(\om,i,w)$ and $\{e^{t'}_{k,1}\}_{k=1}^{rp}$ of $V^{t'}(\om,w',x-1)$. Let $0\leq j\leq r-1$ and, for all $i+1\leq k\leq i+\ell+1$ define a basis element $\epsilon_k^j$ of $V^{t}(\om,i,w)\otimes_R V^{t'}(\om,w',x-1)$ as follows
\[\epsilon_k^j:=\prod_{\substack{i+1\leq m\leq k-1 \\ w_{m}=1,~w'_{m+jp}=x}}\si^{k-m}(t)\prod_{\substack{i+1\leq n\leq k-1 \\ w_{n}=x,~w'_{n+jp}=1}}\si^{k-n}(t')e^t_{k}\otimes e^{t'}_{k+jp,1}.\]
We claim that the module generated by $\{\epsilon_k^j~|~i+1\leq k\leq i+\ell+1\}$ is isomorphic to $V^{tt'}(\om,i,w^{[j]})$. The computation to verify this is completely analogous to the ones in Lemmas \ref{lem:2nd-kind-tensor} and  \ref{lem:1st-tensor-1st} so it will be omitted.

Finally, the result follows by examining the indices to check that it is a direct sum because the modules we defined intersect trivially, and by a dimension count that shows the isomorphism.
\end{proof}

\begin{Lemma}\label{lem:1st-tensor-nob}
Let $w=w_{i+1}\ldots w_{i+\ell}\in\mathbf{D}^t_\om$ and suppose $B^{t'}_\om=\emptyset$, and $F\in \Aut_{\K_\om}(\K_\om^d)$, then
\[ V^{t}(\om,i,w)\otimes_R V^{t'}(\om,1^p,F)=V^{tt'}(\om,i,w)^{\oplus d}\]
\end{Lemma}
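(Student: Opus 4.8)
The plan is to exhibit an explicit basis for the tensor product and show it decomposes into $d$ copies of $V^{tt'}(\om,i,w)$, following the same template as Lemmas~\ref{lem:2nd-kind-tensor}, \ref{lem:1st-tensor-1st}, and \ref{lem:1st-tensor-2nd}, but made simpler by the fact that the second factor has no breaks and the word of the first factor is ``short'' (indexed by $i+1,\dots,i+\ell$, not wrapping around the orbit). First I would recall the bases: $\{e^t_k\}_{k=i+1}^{i+\ell+1}$ for $V^t(\om,i,w)$ and $\{e^{t'}_{k',s}\mid 1\le k'\le p,\ 1\le s\le d\}$ for $V^{t'}(\om,1^p,F)$, where by Definition~\ref{def:finorb-circle} the generator $X$ acts as $\si(t')$ along non-break positions and wraps around at $k'=p$ via $\si(F)$ (and $Y$ dually via $F^{-1}$). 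Since the first factor only occupies weight spaces $\si^{i+1}(\mf_0),\dots,\si^{i+\ell+1}(\mf_0)$, which is a window of length $\ell+1 \le p$ inside the orbit, the wrap-around of $V^{t'}(\om,1^p,F)$ at $k'=p$ may or may not fall inside this window; this is the only subtlety.

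For each $s=1,\dots,d$ I would define basis elements of the tensor product by
\[
\epsilon^s_k := \Big(\textstyle\prod_{\substack{i+1\le m\le k-1\\ w_m=x}}\si^{k-m}(t')\Big)\, e^t_k\otimes \hat e^{\,t'}_{k,s},\qquad i+1\le k\le i+\ell+1,
\]
where $\hat e^{\,t'}_{k,s}$ denotes the basis vector of the weight space $(V^{t'}(\om,1^p,F))_{\si^k(\mf_0)}$ obtained from $e^{t'}_{\bar k,s}$ by applying the appropriate power of $\si(F)$ to account for wrap-around (exactly as in the definition of $\epsilon^t_{k,s}$ in Lemma~\ref{lemma:fakecirc}, i.e. a factor $\si^{\,\ast}(F)$ appears precisely when the index window crosses position $p$). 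The scalar prefactor is there to absorb the $\si(t')$'s that $X$ produces when it acts across an $x$-step of $w$ combined with a non-break step of $1^p$, so that on the $\epsilon^s_k$ the action reproduces \eqref{eq:newVomiwX}--\eqref{eq:newVomiwY} with $t$ replaced by $tt'$: when $w_k=1$ one gets a factor $\si(t)\si(t')=\si(tt')$; when $w_k=x$ one gets $1\cdot(\text{absorbed }\si(t'))$; and when $w_k\in\{y,0\}$ one gets $0$, since $X$ kills $e^t_k$. Dually for $Y$. Because $w\cdot 1^p = w$ in the monoid of Definition~\ref{def:monoid} (as $z\cdot 1 = z$), the word of each summand is unchanged, so each $\{\epsilon^s_k\}_{k=i+1}^{i+\ell+1}$ spans a submodule isomorphic to $V^{tt'}(\om,i,w)$.

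It remains to check that these $d$ submodules are independent and together exhaust the tensor product. Independence is immediate: for fixed $k$, the vectors $\{e^t_k\otimes \hat e^{\,t'}_{k,s}\}_{s=1}^d$ are linearly independent in the weight space $(V^t(\om,i,w))_{\si^k(\mf_0)}\otimes_R (V^{t'}(\om,1^p,F))_{\si^k(\mf_0)}$ because $\{\hat e^{\,t'}_{k,s}\}_s$ is a basis of the second factor (applying an invertible $\si^{\ast}(F)$ preserves linear independence). For the spanning/exhaustion step I would do a dimension count weight space by weight space: by Lemma~\ref{lemma:weightmod} each weight space of the tensor product has dimension $1\cdot d$ over the residue field, matching the total dimension $d$ contributed by the $d$ copies of $V^{tt'}(\om,i,w)$. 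I expect the only mildly delicate point — the ``main obstacle'' — to be bookkeeping the wrap-around factor $\si(F)$ correctly when the window $\{i+1,\dots,i+\ell+1\}$ straddles the index $p$ (equivalently, checking the $Y$-action at $k=i+1$ and the $X$-action at $k=i+\ell+1$ give $0$ as required, using that $\si^{i+1}(\mf_0)$ and $\si^{i+\ell+1}(\mf_0)$ are breaks for $t$ while they are non-breaks for $t'$); but since everything reduces to the already-established computations of Lemmas~\ref{lem:2nd-kind-tensor}--\ref{lem:1st-tensor-2nd}, I would simply note the analogy and omit the routine verification.
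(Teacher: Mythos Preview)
Your approach is essentially the same as the paper's: the paper defines exactly the basis
\[
\epsilon_k^s:=\prod_{\substack{i+1\leq n\leq k-1 \\ w_{n}=x}}\si^{k-n}(t')\;e^t_{k}\otimes\sigma^k\!\left(F^{\left\lfloor \frac{k-1}{p}\right\rfloor}\right)e^{t'}_{k,s},
\]
making your $\hat e^{\,t'}_{k,s}$ explicit, and then likewise appeals to the computations of Lemmas~\ref{lem:2nd-kind-tensor} and~\ref{lem:1st-tensor-1st} together with a dimension count.

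One correction worth flagging: your assertion that the first factor is ``short'' with $\ell+1\le p$ and ``not wrapping around the orbit'' is not accurate. In Definition~\ref{def:type1mod} the length $\ell$ is unrestricted (see e.g.\ the right-hand picture in Figure~\ref{fig:goes-around-circle-twice}, where $p=3$ but $\ell=4$), so $V^t(\om,i,w)$ may circle the orbit several times. This is exactly why the paper's formula carries the exponent $\lfloor(k-1)/p\rfloor$ on $F$, rather than just $0$ or $1$. Your phrase ``appropriate power of $\si(F)$'' already gestures at the right fix, so the structure of your argument survives; just drop the $\ell+1\le p$ claim and let the power of $F$ count the actual number of wrap-arounds.
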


\begin{proof}
Consider the basis elements $\{e^t_{k}\}_{k=i+1}^{i+\ell+1}$ of $V^t(\om,i,w)$ and $\{e^{t'}_{k,s}\}_{1\leq k\leq p,~1\leq s\leq d}$ of $V^{t'}(\om,1^p,F)$. Define a basis element $\epsilon_k^s$ of $V^{t}(\om,i,w)\otimes_R V^{t'}(\om,1^p,F)$ as follows
$$\epsilon_k^s:=\prod_{\substack{i+1\leq n\leq k-1 \\ w_{n}=x}}\si^{k-n}(t')e^t_{k}\otimes\sigma^k\left(F^{\left\lfloor \frac{k-1}{p}\right\rfloor}\right)e^{t'}_{k,s}.$$

Then, again with a computation analogous to the ones in Lemmas \ref{lem:2nd-kind-tensor} and  \ref{lem:1st-tensor-1st}, it is easy to verify that for all $s=1,\ldots,d$, the module generated by $\{\epsilon_k^s~|~i+1\leq k\leq i+\ell+1\}$ is isomorphic to $V^{tt'}(\om,i,w)$ and to use a dimension count to conclude.
\end{proof}

\section{Presentation of the Grothendieck ring}\label{sec:Groth-ring}

We now apply the results of the previous section to describe the Grothendieck ring $$\mathscr{A}(\omega,\mathsf{M})=\bigoplus_{t\in\mathsf{M}} \C\otimes_{\Z}K_0(A(t)\text{-wmod}_\omega),$$ as defined in Proposition \ref{prop:direct-sum}, in the case where $R=\C[z,z^{-1}]$, $\si(z)=q^{-1}z$ with $q=e^{\frac{2\pi i}{p}}$ a primitive $p$-th root of unity, $p\in\Z_{>0}$. Although some things could be stated in more generality, this is a good model for the finite orbit case, like polynomials with shifts was a good model for the infinite orbit case in \cite[\S 4]{HR}. We choose $\mf_0 = (z-1)$, so that we have the orbit $\om=\{(z-q^k)\}_{k=0}^{p-1}$, with $\Fm_i=\si^i(\mf_0)=(z-q^i)$.

We let $\mathsf{M}$ be the multiplicative submonoid of $R$ generated by $\{z-q^k\mid k=0,1,\ldots,p-1\}$. If $t\in\mathsf{M}$, then $t=(z-1)^{k_0}(z-q)^{k_1}\cdots (z-q^{p-1})^{k_{p-1}}$ for some non-negative integers $k_i$.

We introduce the following elements of the Grothendieck ring $\mathscr{A}(\omega,\mathsf{M})$.

\begin{Definition}\label{def:generators}
\begin{subequations} \label{eq:generators}
\begin{align}
u_\xi &=[V^1(\om,1^p,x-\xi)],\quad \xi\in\C^\times,\\
x_i &= [V^{(z-q^i)}(\om,i,1^{p-1})], \quad 0\le i\le p-1,\\
x_{ij} &= [V^{(z-q^i)(z-q^j)}(\om,i,1^{j-i})], \quad 0\le i<j\le p-1, \\
x_{ji} &= [V^{(z-q^i)(z-q^j)}(\om,j,1^{i-j+p})], \quad 0\le i<j\le p-1, \\
y_i &= [V^{(z-q^i)}(\om,1^{i-1}x1^{p-i},x-1)], \quad 0\le i\le p-1,\\
y_i^\ast &= [V^{(z-q^i)}(\om,1^{i-1}y1^{p-i},x-1)], \quad 0\le i\le p-1.
\end{align}
\end{subequations}
\end{Definition}

\begin{Remark}
We will use the cyclic order on the set $0,1,\ldots,p-1$ given by $0<1<\ldots<p-1<0$. Note that this is a ternary relation, so $i\le j$ is meaningless but $i\le j\le k$ makes sense. By a chain of inequalities we mean it is valid for any consecutive subchain of length three.
\end{Remark}

\begin{Theorem}\label{thm:groth-ring}
The Grothendieck ring $\mathscr{A}(\omega,\mathsf{M})$ is isomorphic to the commutative $\C$-algebra with generators $u_\xi, \xi\in\C^\times$, $x_i, y_i, y_i^\ast$, $0\le i\le p-1$, and $x_{ij}, x_{ji}$, $0\le i<j\le p-1$, subject to the relations

\begin{enumerate}[{\rm (i)}]
\item $u_1=1$,
\item $u_\xi u_{\xi'}=u_{\xi\xi'}$,
\item $u_\xi x_i=x_i$,
\item $u_\xi x_{ij}=x_{ij}$,
\item $y_iy_j^\ast =x_ix_j= x_i y_j=x_iy_j^\ast=x_{ij}+x_{ji}$ when $i\neq j$,
\item $y_iy_i^\ast=x_i y_i = x_iy_i^\ast = x_i^2$,
\item $x_{ij}x_k=x_{ik}x_j+x_{kj}x_i$ if $i<k<j$ in the cyclic order,
\item $x_{ij}x_{k\ell}=0$ if $i<j\leq k<\ell\le i$ in the cyclic order,
\item $x_{ij}x_{k\ell}=x_{kj}x_ix_\ell$ if $i\leq k<j \leq \ell\le i$ in the cyclic order,
\item $x_{ij}x_{k\ell}=x_{k \ell}x_i x_j$ if $i\leq k<\ell\leq j\le i$ in the cyclic order,
\item $x_{ij}x_{k \ell}=x_{i\ell}x_kx_j+x_{kj}x_ix_\ell$ if $i<\ell<k<j\le i$ in the cyclic order,
\item $x_{ij}y_k=x_{ij}y_k^*=x_{ij}x_k$ for all $i\neq j$ and for all $k$ (in particular still true when $k=i,j$).
\end{enumerate}
In particular, the following set of elements is a basis for $\mathscr{A}(\omega,\mathsf{M})$ over $\C$:
\begin{enumerate}[{\rm (1)}]
\item\label{it:1t} $x_{ij}x_{k_1}^{a_1}\cdots x_{k_r}^{a_r}$, with $a_\ell\geq0$ and $i\neq j$ and $j\leq k_\ell\leq i$ in the cyclic order for all $\ell$;
\item\label{it:2t} $x_i^{a}$, $a>0$;
\item\label{it:3t} $u_\xi$, $\xi\in\C^\times$;
\item\label{it:4t} $u_\xi y_{k_1}^{a_1}\cdots y_{k_r}^{a_r}$, with $\xi\in\C^\times$, $a_\ell>0$ for all $\ell$;
\item\label{it:5t} $u_\xi (y^*_{k_1})^{a_1}\cdots (y^*_{k_r})^{a_r}$, with $\xi\in\C^\times$, $a_\ell>0$ for all $\ell$.
\end{enumerate}
\end{Theorem}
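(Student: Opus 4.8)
The plan is to establish the theorem in three stages: (1) verify that all the listed relations (i)--(xii) actually hold in $\mathscr{A}(\omega,\mathsf{M})$; (2) show that the proposed spanning set (1)--(5) spans the abstract algebra defined by those relations; and (3) show, by producing enough $\C$-linear functionals or by a dimension/degree count in each graded piece, that this spanning set maps to a linearly independent set in $\mathscr{A}(\omega,\mathsf{M})$, hence is a basis and the presentation is exact.

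For stage (1), each relation is a concrete identity among classes of weight modules, so I would unwind the generators in Definition \ref{def:generators} and apply the tensor product decompositions from Section \ref{sec:definitions}. Relations (i)--(iv) come from Lemma \ref{lemma:tens-FG} (or its corollary) together with $F_{x-\xi}\otimes F_{x-1}=F_{x-\xi}$ and the observation that tensoring a type-1 module by $V^1(\om,1^p,x-\xi)$ only rescales, via Lemma \ref{lem:1st-tensor-nob}, which gives $u_\xi x_i = x_i$ and $u_\xi x_{ij}=x_{ij}$. Relations (v), (vi), (xii) are applications of Lemma \ref{lem:1st-tensor-1st}: tensoring $V^{t}(\om,i,1^{\cdots})$ with a type-1 module supported at a single other break produces a sum of two modules, which one identifies (after using Remark \ref{rem:decomp-type1} and Lemma \ref{lemma:fakecirc} to split off any letter-$0$ pieces) as $x_{ij}+x_{ji}$ or $x_i^2$. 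The multi-break relations (vii)--(xi) again come from Lemma \ref{lem:1st-tensor-1st} applied to two modules each supported on two breaks: the ``line up the shifted words and multiply letterwise'' recipe (illustrated in Example \ref{ex:tensor}) produces, depending on the cyclic configuration of $i,j,k,\ell$, either zero (when the breaks interleave so that every overlap contains an $xy$), or a single indecomposable times some $x_m$'s, or a sum of two such terms; the five cases in (viii)--(xi) are precisely the five combinatorial positions of two arcs on the cycle $\Z/p$, and (vii) is the degenerate case $k=\ell$.

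For stage (2), I would argue that the relations suffice to rewrite any monomial in the generators into the normal forms (1)--(5). The key structural fact is that $\mathscr{A}(\omega,\mathsf{M})$ is $\mathsf{M}$-graded, and the $u_\xi$'s form a subalgebra isomorphic to the group algebra $\C[\C^\times]$ acting as ``rescaling'' on everything; relations (iii),(iv),(xii) let one absorb or discard $u_\xi$'s against any $x_i,x_{ij},x_{ji}$, while relations (v),(vi) convert any product involving both a $y$ and a $y^\ast$ (or a $y$ and an $x_i$) into products of $x_{ij},x_{ji},x_i$, which are then further reduced by (vii)--(xi) so that at most one ``arc'' generator $x_{ij}$ survives and the rest are $x_i$'s lying in the allowed cyclic range; pure $y$-monomials and pure $y^\ast$-monomials that never meet an $x_i$ or their opposite stay in forms (4),(5). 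Bookkeeping the cyclic-order side conditions ``$j\le k_\ell\le i$'' is exactly what relations (ix),(x),(xi) are designed to enforce by pushing arc-indices around.

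The main obstacle is stage (3): proving the normal forms are $\C$-linearly independent in the actual Grothendieck ring. I would do this grading piece by grading piece. Fix $t=(z-1)^{k_0}\cdots(z-q^{p-1})^{k_{p-1}}\in\mathsf{M}$; then $K_0(A(t)\text{-wmod}_\omega)\otimes\C$ has an explicit basis indexed by the isomorphism classes of simple objects, which by Remarks \ref{rem:indec-nobr}--\ref{rem:hat-w} and the classification are: the simples $V^1(\om,1^p, x-\xi)$ (when all $k_i=0$) or, when some $k_i>0$, the simple type-1 modules $V^t(\om,i,1^s)$. So I would show that each proposed normal form of graded degree $t$, when expanded in the Grothendieck ring using the tensor formulas, has a triangular (with respect to some length/support order on words) leading term among these simple classes, with the leading terms of distinct normal forms being distinct simples; a unitriangularity argument then yields independence. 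Equivalently, one can build $\C$-linear functionals on $\mathscr{A}(\omega,\mathsf{M})$ by reading off the multiplicity of a chosen simple module and check these separate the normal forms. The bookkeeping of which simple appears as the ``top'' of a given monomial $x_{ij}x_{k_1}^{a_1}\cdots$ — i.e.\ tracking how the word $\hat{w}$ and its length grow under iterated letterwise multiplication — is where the real work lies, but it is a direct combinatorial consequence of the explicit formulas in Section \ref{sec:definitions}.
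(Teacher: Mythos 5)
Your overall three-stage structure (verify relations, rewrite to normal form in the abstract algebra, then certify independence) matches the paper's strategy, and your descriptions of stages~(1) and~(2) align with the relevant lemmas of Section~\ref{sec:definitions}. However, there are two genuine gaps.

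First, you never address surjectivity. The paper's proof begins with a substantial induction on $\deg(t)$ showing that the class of every simple weight module in $A(t)\text{-wmod}_\omega$ lies in the subalgebra generated by the elements of Definition~\ref{def:generators}; your stages~(1)--(3) at best show the abstract algebra on those generators \emph{injects} into $\mathscr{A}(\omega,\mathsf{M})$, not that the map is onto. Your dimension-count idea in stage~(3) could in principle also give surjectivity, but only if you verify that the normal forms of a fixed graded degree $t$ are in bijection with the simple classes of degree $t$ --- which brings up the second gap.

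Second, your enumeration of simple $A(t)$-modules for $t\ne 1$ is incomplete. You list only the type-1 simples $V^t(\om,i,1^s)$ when some $k_i>0$, but by Remark~\ref{rem:hat-w} there is also the family of ``cyclic'' simples $V^t(\om,w,x-\xi)$ where $w$ has entries in $\{1,x\}$ (resp.\ $\{1,y\}$). These are exactly the modules whose classes are the normal forms~(4) and~(5) ($u_\xi$ times a monomial in the $y_k$'s, resp.\ in the $y_k^\ast$'s), so omitting them makes both the dimension count and the leading-term bijection fail. Once you correct the list, the triangularity machinery becomes unnecessary: the paper observes that every normal form is literally the class of a single distinct simple (no lower-order terms appear), so distinctness of simple classes gives independence at once.
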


\begin{proof}
First we prove that the elements from Definition \ref{def:generators} generate the Grothendieck ring. A basis for the Grothendieck ring is given by simple weight modules, which, given the classification of \cite{DGO}, are one of the following (see Section \ref{sec:definitions} for notation): 
\begin{itemize}
\item $V^t(\om,w,f)$, with $f\in \C[x]$ irreducible polynomial such that $f(0)\neq 0$, and $w=w_1\cdots w_p\in \mathbf{D}^t_{\om}$ such that either $w_i\in\{1,x\}$ for all $i$ or $w_i\in\{1,y\}$ for all $i$;
\item $V^t(\om,i,w)$, with $w=1^k$ for some $0\leq k\leq p-1$ and $0\le i\le p-1$ depending on $B^t_\om$.
\end{itemize}
Notice that any irreducible polynomial $f\in\C[x]$ is linear, and we have a similarity of companion matrices $F_{ax+b}\sim F_{x+b/a}$, hence we can assume that any such $f$ is of the form $f=x-\xi$, with $\xi\in \C^\times$.

Let $\mathcal{A}$ be the subring of $\mathscr{A}(\omega,\mathsf{M})$ generated by $u_\xi, \xi\in\C^\times$, $x_i, y_i, y_i^\ast$, $0\le i\le p-1$, and $x_{ij}, x_{ji}$, $0\le i<j\le p-1$. We will proceed by induction on the degree of $t=(z-1)^{k_0}(z-q)^{k_1}\cdots (z-q^{p-1})^{k_{p-1}}\in \mathsf{M}$ to show that the class of any simple weight module in $A(t)\text{-wmod}_\omega$ is in $\mathcal{A}$. 
If $\deg (t)=0$, then $t=1$, which means that $B^t_\om=B^1_\om=\emptyset$. Any simple module for $t=1$, then is of the form $V^1(\om,w,f)$, with $f=x-\xi$, $\xi\in\C^\times$, and $w=1^p$, that is, its class has to be one of the $u_\xi$ so it is in $\mathcal{A}$. 

Now suppose that $\deg(t)=1$, then $t=z-q^i$ for some $0\leq i\leq p-1$. Then, since $B^t_\om=\{\Fm_i=(z-q^i)\}$ is a single break, the only possibilities for simple weight modules are $V^{t}(\om,1^{i-1}x1^{p-i},x-\xi)$, $V^{t}(\om,1^{i-1}y1^{p-i},x-\xi)$, and $V^{t}(\om,i,1^{p-1})$. By definition, $[V^{t}(\om,i,1^{p-1})]=x_i\in\mathcal{A}$, and by Lemma \ref{lem:nobr-tens-2nd} we have 
$$[V^{t}(\om,1^{i-1}x1^{p-i},x-\xi)]=u_\xi\cdot y_i\in\mathcal{A}, \quad [V^{t}(\om,1^{i-1}y1^{p-i},x-\xi)]=u_\xi\cdot y_i^*\in\mathcal{A}.$$

Next, suppose that $\deg(t)=2$. If $t=(z-q^i)^2$, then $B^t_\om=\{(z-q^i)\}$, hence, like in the $\deg(t)=1$ case, every simple $A(t)$-module has one of the following forms:
$V^{t}(\om,1^{i-1}x1^{p-i},x-\xi)$, $V^{t}(\om,1^{i-1}y1^{p-i},x-\xi)$, or $V^{t}(\om,i,1^{p-1})$.

Let $V=V^{t}(\om,i,1^{p-1})\cong V^{z-q^i}(\om,i,1^{p-1})^{\otimes 2}$ by Lemma \ref{lem:1st-tensor-1st}, then $[V]=x_i^2\in\mathcal{A}$.
If $V=V^{t}(\om,1^{i-1}x1^{p-i},x-\xi)$, then 
\begin{align*}
[V] &=u_\xi\cdot [V^{t}(\om,1^{i-1}x1^{p-i},x-1)]\\
&=u_\xi y_i^2\in\mathcal{A}.
\end{align*}
And similarly, $[V^{t}(\om,1^{i-1}x1^{p-i},x-\xi)]=u_\xi (y_i^\ast)^2\in \mathcal{A}$.

Suppose now that $t=(z-q^i)(z-q^j),\; 0\le i<j\le p-1$. By Lemma \ref{lem:2nd-kind-tensor} and Lemma \ref{lem:nobr-tens-2nd}, there exists $\xi'\in\C^\times$ such that $[V^t(\om,1^{i-1}x1^{j-i-1}x1^{p-j},x-\xi)]=u_{\xi'}y_iy_j\in\mathcal{A}$.
Similarly, $[V^t(\om,1^{i-1}y1^{j-i-1}y1^{p-j},x-\xi)]=u_{\xi'}y_i^\ast y_j^\ast\in\mathcal{A}$.
The only other simple $A(t)$-modules in this case are $x_{ij}$ and $x_{ji}$ which belong to $\mathcal{A}$ by definition.

If $\deg(t)>2$, we first consider the simple weight modules $V=V^t(\om,w,x-\xi)$ with $w_k\in\{1,x\}$ or $w_k\in\{1,y\}$ for all $k$. Write $t=(z-q^i)t'$, with $\deg(t')=\deg(t)-1$. We consider two cases, depending on whether $z-q^i$ is a factor of $t'$ or not.
\begin{itemize}
\item[Case 1)] If $z-q^i$ is a factor of $t'$, then $B^t_\om=B^{t'}_\om$. If $w_k\in\{1,x\}$ for all $k$, by Lemma \ref{lem:2nd-kind-tensor} and Lemma \ref{lem:nobr-tens-2nd},
\[[V]=u_{\xi'}\cdot [V^{t'}(\om,w,x-1)]\cdot y_i\] for some $\xi'\in\C^\times$. By the induction hypothesis, $[V^{t'}(\om,w,x-1)]\in\mathcal{A}$, hence $[V]\in\mathcal{A}$. 

Similarly, if $w_k\in\{1,y\}$ for all $k$, then \[[V]=u_{\xi'} \cdot[V^{t'}(\om,w,x-1)]\cdot y_i^\ast\] which is in $\mathcal{A}$ by induction.

\item[Case 2)] If $z-q^i$ is not a factor of $t'$, then $B^t_\om=B^{t'}_\om\sqcup \{(z-q^i)\}$. If $w_k\in\{1,x\}$ for all $k$, write $w=w_{(1)}xw_{(2)}$ where $x$ is in position $i$ and $w_{(1)}$ and $w_{(2)}$ are subwords. Let $w'=w_{(1)}1w_{(2)}$. Then $w'\in\mathbf{D}^{t'}_\om$.
By Lemmas \ref{lem:2nd-kind-tensor} and \ref{lem:nobr-tens-2nd},

\[[V]=u_{\xi'}\cdot [V^{t'}(\om,w',x-1)]\cdot y_i\]
for some $\xi'\in\mathbb{C}^\times$, which is in $\mathcal{A}$ by induction.

Similarly, if $w_k\in\{1,y\}$ for all $k$, then we write $w=w_{(1)}yw_{(2)}$, with $y$ is in position $i$, and $w'=w_{(1)}1w_{(2)}$. We then have
\[[V]=u_{\xi'}\cdot [V^{t'}(\om,w',x-1)]\cdot y_i^\ast\in\mathcal{A}.\]
\end{itemize}

Finally, again with $\deg(t)>2$, we consider the simple weight modules of the form $V=V^t(\om,i,1^k)$ for some $k\geq 0$. 

\begin{itemize}
\item[Case 1)] If $t=(z-q^i)^{\deg(t)}$ is a power of a linear factor, then $k=p-1$ necessarily and by Lemma \ref{lem:1st-tensor-1st}, $[V]=[V^{t'}(\om,i,1^{p-1})]\cdot x_i$ with $t'=(z-q^i)^{\deg(t)-1}$, which is in $\mathcal{A}$ by the induction hypothesis.    
\item[Case 2)] If $t$ has at least two distinct factors, then by definition $\si^i(\mf_0)=(z-q^i)$ and $\si^{j}(\mf_0)=(z-q^{j})$ with $j=i+k+1$ are distinct breaks in $B^t_\om$. Write $t=(z-q^i)(z-q^j)t'$, with $\deg(t')=\deg(t)-2$. Let $i', j'$ (not necessarily distinct) be such that $(z-q^{i'}), (z-q^{j'})\in B^{t'}_\om$, with $i'\leq i<j\leq j'$ in the cyclic order and such that there are no indices $\ell$ with $i'<\ell<i$ or $j<\ell<j'$ such that $(z-q^\ell)\in B^{t'}_\om$. Then, by Lemma \ref{lem:1st-tensor-1st} we have
\[[V]=\begin{cases}[V^{t'}(\om,i',1^{j'-i'-1})]\cdot x_{ij},\quad \text{ if }0\leq i'< j'\leq p-1,\\
[V^{t'}(\om,i',1^{p+j'-i'-1})]\cdot x_{ij},\quad \text{ if }0\leq j'\leq i'\leq p-1, \end{cases}\]
which in either case is in $\mathcal{A}$ by induction.
\end{itemize} 

Next we prove that relations (i)--(xii) hold.
In principle, all these relations follow directly from the lemmas in Section \ref{sec:definitions}, together with the known composition series of indecomposable weight modules classified in \cite{DGO}. For clarity, we provide some details in each case.

(i) is stating that $u_1$ is the multiplicative identity element in the Grothendieck ring. This follows from Lemmas \ref{lemma:tens-FG}, \ref{lem:2nd-kind-tensor}, and \ref{lem:1st-tensor-2nd}. 

(ii) follows from Lemma \ref{lemma:tens-FG}.

(iii) and (iv) follow from Lemma \ref{lem:1st-tensor-nob}.

(v): Use Lemma \ref{lem:1st-tensor-1st}. In more detail, let $i\neq j$ and $t=(z-q^i)(z-q^j)$ and let $V$ be any non-simple $A(t)$-module with full support and one-dimensional weight spaces. Then $V$ has length two and its simple subquotients have support in two complementary intervals with endpoints $\Fm_i$ and $\Fm_j$. Thus $[V]=x_{ij}+x_{ji}$. 
All of the isoclasses $x_i$, $y_i$, $y_i^\ast$ consist of $A(z-q^i)$-modules with full support and one-dimensional weight spaces. Therefore any product of them have the same property. Thus the $A(t)$-modules 
$y_iy_j^\ast$, $x_ix_j$, $x_iy_j$, $x_iy_j^\ast$, $y_iy_j$ and $y_i^\ast y_j^\ast$ have full support and one-dimensional weight spaces. Out of these only the last two are simple.
The other four therefore equal $x_{ij}+x_{ji}$.
This proves (v).

(vi): Use Lemma \ref{lem:1st-tensor-1st}. In more detail, let $t=(z-q^i)^2$. The isoclass $x_i^2$ consists of $A(t)$-modules $V$ with full support, one-dimensional weight spaces, and such that $X V_{\Fm_i}=0$ and $Y V_{\Fm_{i+1}}=0$. The modules in $y_iy_i^\ast$, $x_iy_i$, $x_iy_i^\ast$ have the same property due to the word rules $xy=0$, $0x=0$, $0y=0$ (see Definition \ref{def:monoid}), respectively. 

(vii): Let $t=(z-q^i)(z-q^j)(z-q^k)$, $i<k<j$. By Lemma \ref{lem:1st-tensor-1st}, $x_{ij}x_k$ decomposes as a direct sum of two simple modules, one with support $\{\Fm_a\mid i<a\le k\}$ and the other with support $\{\Fm_a\mid k<a\le j\}$. There is only one simple $A(t)$-module for each of those supports, corresponding to $x_{ik}x_j$ and $x_{kj}x_i$ respectively.

(viii): The supports of $x_{ij}$ and $x_{kl}$ are disjoint, therefore their product is zero, by Lemma \ref{lemma:weightmod}.

(ix): The support of $x_{ij}x_{kl}$ is the intersection of the supports of $x_{ij}$ and $x_{kl}$. The support of $x_{ij}$ is $[i,j):=\{\Fm_i,\Fm_{i+1},\ldots,\Fm_{j-1}\}$. So the support of $x_{ij}x_{kl}$ equals $[i,j)\cap [k,l)=[k,j)$ when $i\le k<j\le\ell\le i$. Meanwhile, $x_i$ and $x_\ell$ have full support. Multiplying by them does not change the support, only the degree. This shows that $x_{ij}x_{kl}$ and $x_{kj}x_ix_\ell$ have the same support, one-dimensional weight spaces, and are of the same degree. Therefore they are equal.

(x): The proof is similar to the proof of relation (ix).

(xi): The proof is similar to the proof of relation (ix). Observe that in this case the support of either side equals $[i,\ell)\sqcup [k,j)$ as the two intervals overlap twice in the left hand side.

(xii): The proof is again similar to the proof of relation (ix): $y_k, y_k^\ast, x_k$ all have full support. However, when multiplying by $x_{ij}$ the support becomes $[i,j)$ in all three cases.

It remains to show that this is a complete set of relations.

We now show that, using the relations (i)-(xii), we can write any monomial in the generators $u_\xi$, $x_i$, $x_{ij}$, $y_i$, $y^*_i$ as a linear combination of the following:

\begin{enumerate}
\item\label{it:1} $x_{ij}x_{k_1}^{a_1}\cdots x_{k_r}^{a_r}$, with $a_\ell\geq0$ and $i\neq j$ and $j\leq k_\ell\leq i$ in the cyclic order for all $\ell$;
\item\label{it:2} $x_i^{a}$, $a>0$;
\item\label{it:3} $u_\xi$, $\xi\in\C^\times$;
\item\label{it:4} $u_\xi y_{k_1}^{a_1}\cdots y_{k_r}^{a_r}$, with $\xi\in\C^\times$, $a_\ell>0$ for all $\ell$;
\item\label{it:5} $u_\xi (y^*_{k_1})^{a_1}\cdots (y^*_{k_r})^{a_r}$, with $\xi\in\C^\times$, $a_\ell>0$ for all $\ell$.
\end{enumerate}

First of all, suppose that the monomial contains a generator $x_{ij}$ for some $i$ and $j$. Then using (iv) we can make it so that the monomial does not contain any generator $u_\xi$ for any $\xi$, and using (xii) we can also write the monomial as not containing any $y_k$ nor $y^*_k$ but only potentially terms of the form $x_k$. Also, we can use relations (viii)-(xi) to have only a single generator of the form $x_{ij}$ in the monomial. Finally, we can use (vii) to make sure that any $x_k$'s appearing are such that $i<j\leq k\leq i$ in the cyclic order. Hence, any monomial containing $x_{ij}$ can be written as sum of terms of the form \eqref{it:1}.

Now, suppose that we have a monomial that contains $x_i$ for some $i$. By relation (v), if it also contains any of $x_j$, $y_j$, or $y^*_j$ with $j\neq i$, then we can write it as a sum of monomials each including an $x_{ij}$, hence we are in the previous case. It follows that any monomial containing $x_i$ that cannot be written as in \eqref{it:1} has to contain only terms $x_i$, $y_i$ and $y^*_i$ with the same index. Then, by (iii) we can remove any term $u_\xi$, for any $\xi\in\C^\times$ and by (vi) we can rewrite such a monomial as just a power of $x_i$, so it is of the form \eqref{it:2}.

All other monomials not containing generators $x_{ij}$ nor $x_k$ for any indices have to just be expressions in the $u_\xi$'s, $y_i$'s and $y^*_i$'s. Notice that a monomial containing both a generator $y_i$ and $y^*_j$ can be written with $x_{ij}$ (if $i\neq j$ using (v)) or with a power of $x_i$ (if $i=j$ using (vi)). Also if we have multiple terms of the form $u_\xi$, we can consolidate them into a single one by using (ii). It follows that the only other possibilities are either a single $u_\xi$ \eqref{it:3} , or $u_\xi$ times a product of $y_i$'s \eqref{it:4}, or $u_\xi$ times a product of $y^*_i$'s  \eqref{it:5}.

Now, considering the monomials in normal form (1)-(5), we have, using Definition \ref{def:generators}, together with Lemmas \ref{lem:1st-tensor-1st}, \ref{lem:2nd-kind-tensor}, and \ref{lem:nobr-tens-2nd}, the following equalities in the Grothendieck ring:

\begin{enumerate}
\item $x_{ij}x_{k_1}^{a_1}\cdots x_{k_r}^{a_r}=[V^t(\om,i,1^{j-i})]$, with $t=(z-q^i)(z-q^j)\prod_{\ell=1}^r(z-q^{k_\ell})^{a_\ell}$;
\item $x_i^a=[V^t(\om,i,1^{p-1})]$, with $t=(z-q^i)^a$;
\item $u_{\xi}=[V^1(\om,1^p,x-\xi)]$;
\item $u_\xi y_{k_1}^{a_1}\cdots y_{k_r}^{a_r}=[V^t(\om,w,x-\xi\nu_{t})]$, where $t=\prod_{\ell=1}^r(z-q^{k_\ell})^{a_\ell}$, $w=w_1\cdots w_p$ with $w_j\in\{1,x\}$ (depending on the zeroes of $t$) for all $j=1,\ldots,p$, and $\nu_{t}\in\C^\times$;
\item $ u_\xi (y^*_{k_1})^{a_1}\cdots (y^*_{k_r})^{a_r}=[V^t(\om,w,x-\xi)]$, where $t=\prod_{\ell=1}^r(z-q^{k_\ell})^{a_\ell}$, and $w=w_1\cdots w_p$ $w_j\in\{1,y\}$ (depending on the zeroes of $t$) for all $j=1,\ldots,p$.
\end{enumerate}

These are distinct classes of simple modules, hence they are linearly independent in the sum of Grothendieck rings. This proves that it is a basis and that we have found a complete set of relations.
\end{proof}

\begin{Remark}
It follows from the linear independence of the words in normal form in the above proof that the subalgebra $\C[u_\xi\mid \xi\in\C^\times]$ of $\mathscr{A}(\omega,\mathsf{M})$ is isomorphic to the group algebra $\C[\C^\times]$ of the multiplicative group of nonzero complex numbers.
\end{Remark}

\begin{Proposition}
For a positive integer $p$, let $\mathscr{A}(p)$ be the Grothendieck ring $\mathscr{A}(\omega,\mathsf{M})$ modulo the ideal generated by $u_\xi-1$ for all $\xi\in\C^\times$. $\mathscr{A}(p)$ is graded by the commutative monoid $\mathsf{M}$ which we identify with $\N^p$.
\begin{enumerate}[{\rm (a)}]
\item We have
\begin{equation}\label{eq:dimAd}
\sum_{d\in\mathbb{N}^p} (\dim \mathscr{A}(p)_d)T^d = -1+\frac{2+T_1+T_2+\cdots+T_p}{(1-T_1)(1-T_2)\cdots(1-T_p)}.
\end{equation}
\item The Hilbert series of $\mathscr{A}(p)$ equals
\begin{equation}
H_{\mathscr{A}(p)}(T)=-1+\frac{2+pT}{(1-T)^p}.
\end{equation}
\end{enumerate}
\end{Proposition}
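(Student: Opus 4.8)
The plan is to read a $\C$-basis of $\mathscr{A}(p)$ off the basis of $\mathscr{A}(\omega,\mathsf{M})$ given in Theorem \ref{thm:groth-ring}, record the $\N^p$-degree of each basis element, and then evaluate the resulting sum of monomials in closed form; part~(b) will be the specialization $T_1=\cdots=T_p=T$ of part~(a).

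\emph{Step 1 (a basis of $\mathscr{A}(p)$).} Relations (iii), (iv) of Theorem \ref{thm:groth-ring} say that every monomial containing some $x_i$ or some $x_{ij}$ is fixed by each $u_\xi$, so $u_\xi-1$ annihilates all basis elements of types (1) and (2) of that theorem; consequently the ideal $(u_\xi-1\mid\xi\in\C^\times)$ lies in the span of the families $\{u_\eta m\mid\eta\in\C^\times\}$, where $m$ ranges over $1$, the nontrivial monomials in the $y_i$, and the nontrivial monomials in the $y_i^\ast$ (types (3), (4), (5)). On each such family the ideal restricts, by relation (ii), to the codimension-one ``augmentation'' subspace $\{\sum_\eta c_\eta u_\eta m\mid\sum_\eta c_\eta=0\}$. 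Hence $\mathscr{A}(p)$ has a $\C$-basis consisting of the elements of types (1) and (2), together with $1$ and the nontrivial monomials in the $y_i$'s and in the $y_i^\ast$'s. Identifying $\mathsf{M}\cong\N^p$ with coordinates $e_0,\dots,e_{p-1}$ indexed by $\Z/p\Z$, Definition \ref{def:generators} gives $\deg x_i=\deg y_i=\deg y_i^\ast=e_i$, $\deg x_{ij}=e_i+e_j$, and $\deg u_\xi=0$.

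\emph{Step 2 (assembling the generating function).} Summing $T^{\deg b}$ over the basis of Step 1, organised by type: the element $1$ contributes $1$; the elements $x_i^a$ ($a\ge1$) contribute $\sum_i\frac{T_i}{1-T_i}$; the nontrivial $y$-monomials contribute $\prod_i\frac1{1-T_i}-1$, and likewise for the $y^\ast$-monomials; and for each ordered pair $(i,j)$ with $i\ne j$ the type-(1) elements $x_{ij}\prod_{k\in S_{ij}}x_k^{a_k}$ (with $a_k\ge0$, where $S_{ij}=\{k\mid j\le k\le i\ \text{in the cyclic order}\}$ is the closed arc from $j$ to $i$, which contains both $i$ and $j$) contribute $T_iT_j\prod_{k\in S_{ij}}\frac1{1-T_k}$. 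Thus $\sum_d(\dim\mathscr{A}(p)_d)\,T^d$ equals
\[
1+\sum_i\frac{T_i}{1-T_i}+2\left(\prod_i\frac1{1-T_i}-1\right)+\sum_{i\ne j}T_iT_j\prod_{k\in S_{ij}}\frac1{1-T_k},
\]
the last sum taken over ordered pairs.

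\emph{Step 3 (the closed form).} Multiplying this expression and the claimed right-hand side $-1+\frac{2+\sum_i T_i}{\prod_i(1-T_i)}$ by $\prod_i(1-T_i)$ and cancelling the common terms $-\prod_i(1-T_i)$ and $2$, identity~(a) reduces to
\[
\sum_i T_i\prod_{k\ne i}(1-T_k)+\sum_{i\ne j}T_iT_j\prod_{k\notin S_{ij}}(1-T_k)=\sum_i T_i.
\]
I would prove this by expanding each product into squarefree monomials $T_C=\prod_{k\in C}T_k$. A short sign count gives $\sum_i T_i\prod_{k\ne i}(1-T_k)=\sum_i T_i-\sum_{|C|\ge2}(-1)^{|C|}|C|\,T_C$. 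For the second sum, extracting $T_C$ from $T_iT_j\prod_{k\notin S_{ij}}(1-T_k)$ forces $\{i,j\}\subseteq C$ and $C\cap S_{ij}=\{i,j\}$, so this sum equals $\sum_{|C|\ge2}(-1)^{|C|}N_C\,T_C$, where $N_C$ is the number of ordered pairs of distinct elements of $C$ with no element of $C$ strictly between them in the cyclic order. The combinatorial heart is that such pairs are exactly the cyclically consecutive pairs of $C$, so $N_C=|C|$ for $|C|\ge2$; the two $\sum_{|C|\ge2}$ contributions then cancel and the identity follows, proving~(a). Part~(b) is the specialization $T_i\mapsto T$ for all $i$, turning $\sum_i T_i$ into $pT$ and $\prod_i(1-T_i)$ into $(1-T)^p$.

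\emph{Expected main obstacle.} The delicate part is the type-(1) contribution in Steps 2--3: getting the arc $S_{ij}$ right (closed, containing both endpoints), correctly identifying which subsets $C$ occur when a squarefree monomial is pulled out of $T_iT_j\prod_{k\notin S_{ij}}(1-T_k)$, and recognising the count $N_C=|C|$. Everything else is routine geometric-series bookkeeping. A secondary, minor point is the Step-1 verification that passing to $\mathscr{A}(p)$ introduces no relations beyond collapsing each family $\{u_\eta m\}$ to a single class.
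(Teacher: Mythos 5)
Your proof is correct, and you correctly flagged the genuine difficulty (the arc bookkeeping and the count $N_C=|C|$). The paper reaches the same closed form by a slightly more direct route from the basis of Theorem \ref{thm:groth-ring}: instead of regrouping the basis by type and then proving a rational-function identity, it computes $\dim\mathscr{A}(p)_d$ degree by degree. For $d\neq 0$ the basis elements of degree $d$ are the unique $y$-monomial and the unique $y^\ast$-monomial of that degree, plus one element $z_{d,i}$ of type (1) or (2) for each $i$ with $d_i\neq 0$ (where, when several coordinates are nonzero, $z_{d,i}=x_{ij}\prod_k x_k^{a_k}$ with $j$ the cyclic successor of $i$ among the indices carrying nonzero $d$-coordinate, so that the arc from $j$ to $i$ contains all nonzero coordinates and the $a_k$ are forced); hence $\dim\mathscr{A}(p)_d=2+\#\{i:d_i>0\}$, and the generating function follows by a one-line geometric-series computation. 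The content of your lemma $N_C=|C|$ is precisely the bijectivity of this cyclic-successor map, just deployed one step later, so the two arguments isolate the same combinatorial core. Your route pays with the sign and inclusion--exclusion bookkeeping of Step~3, while the paper's pays by having to spell out the bijection $\{i:d_i>0\}\to\{z_{d,i}\}$. Your Step~1 (that the ideal $(u_\xi-1\mid\xi\in\C^\times)$ is exactly the sum of the augmentation subspaces of the families $\{u_\eta m\}$, so the quotient basis is as you describe) is correct, and is a point the paper leaves implicit.
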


\begin{proof}
Fix $p$ and put $A=\mathscr{A}(p)$ for brevity.

(a) Obviously $\dim A_0=1$.
By Theorem \ref{thm:groth-ring}, if $d$ is nonzero, a basis for $A_d$ is given by 
\[\big\{y_1^{d_1}y_2^{d_2}\cdots y_p^{d_p},\, 
(y_1^\ast)^{d_1}(y_2^\ast)^{d_2}\cdots (y_p^\ast)^{d_p}\big\}\cup\{ z_{d,i}\mid 1\le i\le p, d_i\neq 0\big\}\]
where for each $i$ such that $d_i\neq 0$, $z_{d,i}$ is defined as follows:
\begin{itemize}
\item if $i$ is the unique index such that $d_i\neq 0$, we have $z_{d,i}=x_i^{d_i}$;
\item otherwise, $z_{d,i}=x_{ij}x_{k_1}^{a_1}\cdots x_{k_r}^{a_r}$, with $a_\ell\geq0$ and $i\neq j$ and $j\leq k_\ell\leq i$ in the cyclic order for all $\ell$, where the $a_s$ are determined by the requirement  $\deg(z_{d,i})=d$, and $j$ is the unique index such that $d_r=0$ for all $r$ with $i<r<j$.
\end{itemize}
Thus
\[
\dim A_d = \begin{cases} 2+\delta_{d_1>0}+\cdots+\delta_{d_p>0},& d\neq 0,\\
1,& d=0,\end{cases}
\]
where $\delta_P$ equals $1$ if statement $P$ is true and $0$ otherwise.
Thus, using multi-index notation $T^d=T_1^{d_1}\cdots T_p^{d_p}$,
\begin{align*}
1+\sum_{d\in\N^p}(\dim A_d)T^d 
&=\sum_{d\in\N^p}(2+\delta_{d_1>0}+\cdots+\delta_{d_p>0})T^d\\
&=2\sum_{d\in\N^p}T^d + \sum_{d\in\N^p}\delta_{d_1>0}T^d+\cdots
+\sum_{d\in\N^p}\delta_{d_p>0}T^d\\
&=2\sum_{d\in\N^p}T^d + \sum_{d\in\N^p}T_1T^d+\cdots+\sum_{d\in\N^p}T_pT^d\\
&=(2+T_1+T_2+\cdots+T_p)
\big(\sum_{d_1\in\N}T_1^{d_1}\big)\cdots \big(\sum_{d_p\in\N}T_p^{d_p}\big)\\
&=\frac{2+T_1+T_2+\cdots+T_p}{(1-T_1)(1-T_2)\cdots(1-T_p)}
\end{align*}

(b) Immediate by substituting $T_1=T_2=\cdots=T_p=T$ in \eqref{eq:dimAd}.
\end{proof}

\begin{Remark}
Experimental calculations indicate that that for every $p\ge 1$, the power series expansion of $P_{\mathscr{A}(p)}(T):=1/H_{\mathscr{A}(p)}(-T)=\sum_{d=0}^\infty e_d(p) T^d$ at $T=0$ has non-negative integer coefficients $e_d(p)$. 
(If $\mathscr{A}(p)$ were a quadratic algebra this would provide evidence for $\mathscr{A}(p)$ being a Koszul algebra.)
We currently do not understand why. It would be interesting to interpret $e_d(p)$ as the dimension of some vector space related to $\Ext^d(\mathscr{A}(p),\mathscr{A}(p))$.
\end{Remark}

\section{On the structure of the split Grothendieck ring}\label{sec:split-groth-ring}

Let $R$ be commutative ring, $\si\in\Aut(R)$ of order $p\in\Z_{>0}$, $\omega\in \MaxSpec(R)/\Z$ also of order $p$.
Let $\mathsf{M}\subset R_{\mathrm{reg}}$ be a submonoid of the regular elements of $R$. For $t\in\mathsf{M}$, let $A(t)=A(R,\si,t)$ be the GWA and 
\[\mathscr{A}^{\mathrm{split}}(\omega,\mathsf{M})=\bigoplus_{t\in\mathsf{M}} \C\otimes_\Z K_0^{\mathrm{split}}\big(A(t)\mathrm{-wmod}_\omega\big)\]
be the corresponding split Grothendieck ring. In general, this is hard to describe, but we are able to obtain some partial results about its structure in some cases. Recall that a basis of the split Grothendieck group is given by the classes of non-isomorphic indecomposable modules.

\subsection{General properties}

\begin{Definition}\label{def:ideal}
Let $\mathscr{I}(\om,\mathsf{M})$ be the span of all equivalence classes $[V]\in\mathscr{A}^{\mathrm{split}}(\omega,\mathsf{M})$ of indecomposable weight modules of the kind $V=V^t(\om,i,w)$ for $t\in \mathsf{M}$ such that $B^t_\omega\neq\emptyset$ and $i,w$ are as in Definition \ref{def:type1mod}.
\end{Definition}

Recall that $\Bbbk$ denotes the field $R/\Fm_0$, where $\Fm_0$ is a fixed but arbitrary element of the orbit $\omega$.

\begin{Proposition} \label{prop:ideal}
Assume $\Bbbk$ is algebraically closed. Then 
 $\mathscr{I}(\om,\mathsf{M})$ is an ideal of $\mathscr{A}^{\mathrm{split}}(\om,\mathsf{M})$.
\end{Proposition}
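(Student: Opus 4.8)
The plan is to show that for any indecomposable weight module $V = V^t(\om,i,w)$ of the ``type-1'' (non-circular) kind with $B^t_\om\neq\emptyset$, and any indecomposable weight module $V'$ over some $A(t')$ with $t'\in\mathsf{M}$, the tensor product $V\otimes_R V'$ decomposes in the split Grothendieck ring as a $\Z$-linear (indeed $\N$-linear) combination of classes of type-1 indecomposables. Since the classes of all indecomposable weight modules form a basis of $\mathscr{A}^{\mathrm{split}}(\om,\mathsf{M})$, it suffices to check this on a basis, i.e. for $V'$ indecomposable; and by the classification recalled in Section~\ref{sec:notation} (Remarks~\ref{rem:indec-nobr}, \ref{rem:hat-w}, and the Remark after Definition~\ref{def:type1mod}), every indecomposable weight module with support in $\om$ is, up to isomorphism, one of: (a) $V^{t'}(\om,1^p,f)$ with $B^{t'}_\om=\emptyset$ and $f$ indecomposable; (b) $V^{t'}(\om,w',f)$ with $B^{t'}_\om\neq\emptyset$, $w'$ non-periodic without letter $0$, $f$ indecomposable (the ``circular'' kind); or (c) $V^{t'}(\om,i',w')$ of type-1 with $w'$ without letter $0$. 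So the proof is a case analysis over these three types of second factor.

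First I would reduce further. In case (a), the relevant tensor product is handled by Lemma~\ref{lem:1st-tensor-nob}: $V^t(\om,i,w)\otimes_R V^{t'}(\om,1^p,F) = V^{tt'}(\om,i,w)^{\oplus d}$, which is a sum of type-1 modules (and $B^{tt'}_\om=B^t_\om\neq\emptyset$), so the class lies in $\mathscr{I}(\om,\mathsf{M})$. Here algebraic closedness of $\Bbbk$ is used only to know the indecomposable $f$ is linear so that $V^{t'}(\om,1^p,f)$ has $d=1$, but actually Lemma~\ref{lem:1st-tensor-nob} applies for any $F$, so this case needs nothing extra. In case (b), the circular factor $V^{t'}(\om,w',f)$: here I would first use the decomposition $V^{t'}(\om,w',f)\simeq V^1(\om,1^p,f)\otimes_R V^{t'}(\om,w',x-1)$ (Corollary-type reasoning from Lemma~\ref{lem:nobr-tens-2nd}/Corollary~\ref{cor:tens-f-x-1}; since $\Bbbk$ is algebraically closed, $f$ is linear, so this is literally $f = x-\xi$ and the decomposition is exact up to the power-$[r]$ twist), reducing to tensoring $V^t(\om,i,w)$ with $V^1(\om,1^p,f)$ (case (a), already done) and with $V^{t'}(\om,w',x-1)$. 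The latter is exactly the content of Lemma~\ref{lem:1st-tensor-2nd}: $V^t(\om,i,w)\otimes_R V^{t'}(\om,w',x-1) = \bigoplus_{j=0}^{r-1} V^{tt'}(\om,i,w^{[j]})$, a sum of type-1 modules. One must check $B^{tt'}_\om\neq\emptyset$: indeed $B^{tt'}_\om\supseteq B^t_\om\neq\emptyset$. The summands $V^{tt'}(\om,i,w^{[j]})$ may themselves be decomposable (if $w^{[j]}$ contains a letter $0$), but by Remark~\ref{rem:decomp-type1} they decompose further into type-1 indecomposables, so the class stays in $\mathscr{I}(\om,\mathsf{M})$. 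Case (c), a type-1 second factor, is governed by Lemma~\ref{lem:1st-tensor-1st}: $V^t(\om,i,w)\otimes_R V^{t'}(\om,i',w')$ is a direct sum of modules $V^{tt'}(\om,i',(w')^{(j)})$ and $V^{tt'}(\om,i,w^{(j)})$, all of type-1; again $B^{tt'}_\om\neq\emptyset$, and the possibly-decomposable summands reduce to type-1 indecomposables via Remark~\ref{rem:decomp-type1}.

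Putting these together: $[V]\cdot[V']\in\mathscr{I}(\om,\mathsf{M})$ for every generator $[V]$ of $\mathscr{I}(\om,\mathsf{M})$ and every basis element $[V']$ of $\mathscr{A}^{\mathrm{split}}(\om,\mathsf{M})$, hence $\mathscr{I}(\om,\mathsf{M})$ is an ideal. Commutativity of the ring (Remark after Proposition~\ref{prop:direct-sum}) means I only need to multiply on one side. The one genuinely delicate point — the ``main obstacle'' — is bookkeeping about breaks: I must confirm in every case that the output modules are genuinely of type-1 form, i.e. that $B^{tt'}_\om\neq\emptyset$ (immediate since $B^t_\om\subseteq B^{tt'}_\om$) and that when I invoke Definition~\ref{def:type1mod} the requisite endpoints $\si^{i}(\mf_0), \si^{i+\ell+1}(\mf_0)$ are indeed breaks for $tt'$ — which again follows since being a break for $t$ forces being a break for $tt'$. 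I should also make explicit why the three-type list of indecomposables is exhaustive and why it suffices to test ideal membership against a basis (bilinearity of the product on $K_0^{\mathrm{split}}$). Everything else is an appeal to the lemmas of Section~\ref{sec:definitions} and the decomposition Remarks of Section~\ref{sec:notation}, with no new computation required.
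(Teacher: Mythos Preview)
Your proposal is correct and follows essentially the same route as the paper: Lemma~\ref{lem:1st-tensor-1st} for the type-1 $\times$ type-1 case, and for the circular factor the factorization via Corollary~\ref{cor:tens-f-x-1} followed by Lemmas~\ref{lem:1st-tensor-2nd} and~\ref{lem:1st-tensor-nob}. One slip to fix: over algebraically closed $\Bbbk$ an \emph{indecomposable} polynomial is $(x-\xi)^a$, not necessarily linear, so in case~(b) the factorization should read $V^{t'}(\om,w',f)\simeq V^{1}(\om,1^p,f')\otimes_R V^{t'}(\om,w',x-1)$ with $f'$ chosen so that $f'^{[r]}=f$ (take $f'=(x-\xi^{1/r})^a$); the existence of such an $f'$ is exactly where algebraic closedness enters, and with this correction your argument is complete.
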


\begin{proof}
By Lemma \ref{lem:1st-tensor-1st}, $\mathscr{I}(\om,\mathsf{M})$ is closed under multiplication.
The class of any indecomposable module which is not in $\mathscr{I}(\om,\mathsf{M})$ is of the form $[V^t(\om,w,f)]$, where $f$ is an indecomposable polynomial in $\Bbbk[x]$.
Since $\Bbbk$ is algebraically closed, 
there is an $f'\in\Bbbk[x]$ such that $f'^{[r]}=f$, with the property that
$V^t(\om,w,f)\simeq V^1(\om,1^p,f')\otimes_R V^t(\om,w,x-1)$ by Corollary \ref{cor:tens-f-x-1}.
By Lemmas \ref{lem:1st-tensor-2nd}, and \ref{lem:1st-tensor-nob} it now follows that $[V^t(\om,w,f)]\cdot \mathscr{I}(\om,\mathsf{M})\subseteq \mathscr{I}(\om,\mathsf{M})$.
\end{proof}

\begin{Remark}
We expect this result to hold even when the ground field is not algebraically closed.
\end{Remark}

\subsection{Split Grothendieck ring for trivial monoid}\label{sec:triv-monoid}

We will use the same set up for $R$, $\si$ and $\om$ as we did in Section \ref{sec:Groth-ring}, which we now recall. We let $R=\C[z,z^{-1}]$, $\si(z)=q^{-1}z$ with $q=e^{\frac{2\pi i}{p}}$ a primitive $p$-th root of unity, $p\in\Z_{>0}$. We choose $\mf_0 = (z-1)$, so that we have the orbit $\om=\{(z-q^k)\}_{k=0}^{p-1}$, with $\Fm_i=\si^i(\mf_0)=(z-q^i)$. However, here we consider the trivial monoid $\mathsf{M}=\{1\}$, so in the sum defining the algebra we have just one Grothendieck group 

\[\mathscr{A}^{\mathrm{split}}(\omega,\{1\})=\C\otimes_\Z K_0^{\mathrm{split}}\big(A(1)\mathrm{-wmod}_\omega\big).\]

When $t=1$, the generalized Weyl algebra is isomorphic to a skew Laurent polynomial algebra over $R$, hence $A(1)\simeq \C[z,z^{-1}][X,X^{-1};\si]$, $\si(z)=q^{-1}z$, where $q=e^{\frac{2\pi i}{p}}$.

By the classification in \cite{DGO}, as explained in Remark \ref{rem:indec-nobr}, all indecomposable weight modules for $A(1)$ with support in $\omega$ are of the form $V^1(\om,1^p,f)$, where $f$ is an indecomposable polynomial in $\Bbbk[x]=\C[x,x^{-1}]$. Such an $f$ has then to be of the form $f=(x-\xi)^a$, with $\xi\in\C^\times$ and $a\geq 1$.

\begin{Definition}We define the elements
\[u(\xi,a)=\big[V^1(\om,1^p,(x-\xi)^a)\big],\qquad \xi\in\C^\times, a\in\Z_{\ge 1},\]
which form a basis of $\mathscr{A}^{\mathrm{split}}(\omega,\{1\})$.
\end{Definition}

\begin{Lemma}\label{lem:tensor-jordan}Multiplication in the split Grothendieck ring $\mathscr{A}^{\mathrm{split}}(\omega,\{1\})$, with respect to the basis we just defined, is given by
\[u(\xi,a)u(\eta,b)=\sum_{k=1}^{\min\{a,b\}} u\big(\xi\eta,\,a+b-(2k-1)\big)\]
for all $\xi,\eta\in\C^\times$, $a,b\in\Z_{\ge 1}$.
\end{Lemma}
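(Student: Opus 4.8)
The plan is to reduce the statement to the classical decomposition of the Kronecker product of two Jordan blocks. In the present setting $\Bbbk = R/\Fm_0 = \C$, so $\Bbbk[x,x^{-1}] = \C[x,x^{-1}]$, whose indecomposable elements (up to units) are exactly the prime powers $(x-\xi)^a$, $\xi\in\C^\times$, $a\ge 1$, and the companion matrix $F_{(x-\xi)^a}$ is similar to the single Jordan block $J_a(\xi)$. By Remark \ref{rem:indec-nobr}, together with the evident direct-sum decomposition $V^1(\om,1^p,F\oplus F')\simeq V^1(\om,1^p,F)\oplus V^1(\om,1^p,F')$, the class of $V^1(\om,1^p,F)$ in $K_0^{\mathrm{split}}$ depends only on the conjugacy class of $F$ and equals $\sum_i u(\lambda_i,n_i)$, where $\bigoplus_i J_{n_i}(\lambda_i)$ is the Jordan form of $F$. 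Thus in $K_0^{\mathrm{split}}$ we may freely replace invertible matrices by their Jordan normal forms.

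Next I would apply Lemma \ref{lemma:tens-FG} with $t = t' = tt' = 1$ (so $B^1_\om = \emptyset$), which gives
\[
u(\xi,a)\,u(\eta,b) = \big[\,V^1(\om,1^p,\, J_a(\xi)\otimes J_b(\eta))\,\big].
\]
So the lemma reduces to the assertion that $J_a(\xi)\otimes J_b(\eta)$ is conjugate to $\bigoplus_{k=1}^{\min\{a,b\}} J_{a+b+1-2k}(\xi\eta)$. To prove this, conjugate so that $J_a(\xi) = \xi U_a$ and $J_b(\eta) = \eta U_b$, with $U_a,U_b$ the standard unipotent Jordan blocks; then $J_a(\xi)\otimes J_b(\eta)$ is conjugate to $\xi\eta\,(U_a\otimes U_b)$, and since $\xi\eta\neq 0$ it suffices to decompose $U_a\otimes U_b$. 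Viewing the size-$n$ unipotent Jordan block as a regular unipotent element of $\SL_2(\C)$ acting on the $(n-1)$-st symmetric power $\mathrm{Sym}^{n-1}$ of the standard $2$-dimensional module, the Clebsch--Gordan formula yields $U_a\otimes U_b \cong \bigoplus_{k=1}^{\min\{a,b\}} U_{a+b+1-2k}$; equivalently this can be checked by computing $\dim\ker\big((U_a\otimes U_b - I)^j\big)$ for all $j$, which pins down the block sizes. Rescaling by $\xi\eta$ turns each $U_n$ into $J_n(\xi\eta)$, which represents the indecomposable module $V^1(\om,1^p,(x-\xi\eta)^n)$, i.e.\ the basis element $u(\xi\eta,n)$; rewriting $a+b+1-2k = a+b-(2k-1)$ then gives the asserted formula.

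The only nontrivial ingredient is the Jordan-block Kronecker product decomposition, which is classical (and is what produces the $\min\{a,b\}$ summands of sizes decreasing by $2$); the remainder is bookkeeping between companion and Jordan normal forms, plus the identification, via Remark \ref{rem:indec-nobr}, of indecomposable summands with the basis elements $u(\lambda,n)$. If one prefers a self-contained argument, the main work is the explicit nilpotent rank computation just indicated, which is routine.
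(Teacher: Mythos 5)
Your proposal is correct and follows essentially the same route as the paper: identify $F_{(x-\xi)^a}$ with the Jordan block $J_a(\xi)$, apply Lemma~\ref{lemma:tens-FG} (with $B^1_\om=\emptyset$) to reduce the product $u(\xi,a)u(\eta,b)$ to the class of $V^1(\om,1^p,J_a(\xi)\otimes J_b(\eta))$, and then invoke the Jordan decomposition of a Kronecker product of Jordan blocks. The only difference is that the paper simply cites Horn--Johnson \cite[Thm.~4.3.17]{HJ} for that last classical fact, whereas you sketch a proof of it (via Clebsch--Gordan for $\mathfrak{sl}_2$ acting on symmetric powers, or a rank count), which is perfectly fine but not a different method.
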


\begin{proof}
Since the linear transformation $F_{(x-\xi)^a}$ given by the companion matrix of the indecomposable polynomial $(x-\xi)^a$ is isomorphic to a single Jordan block of size $a$ with eigenvalue $\xi$, this follows immediately from the Jordan decomposition of the tensor product of two Jordan blocks, which can be found in \cite[Thm 4.3.17]{HJ}.
\end{proof}

The following result follows immediately from \ref{lem:tensor-jordan} and the properties of the representations of the Lie algebra $\Fsl_2$.

\begin{Proposition}
The split Grothendieck ring $\mathscr{A}^{\mathrm{split}}(\omega,\{1\})$ contains a copy of the  Grothendieck ring of the (semisimple) category of finite-dimensional representations of the Lie algebra $\Fsl_2$. For $n\in\Z_{\ge 0}$, let $L(n)$  denote the $(n+1)$-dimensional irreducible representation of $\Fsl_2$, then an embedding is given by
\[[L(n)]\mapsto u(1,n+1),\qquad\forall n\in\Z_{\ge 0}.\]
\end{Proposition}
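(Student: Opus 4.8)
The plan is to exhibit the subspace spanned by $\{u(1,n+1)\mid n\in\Z_{\ge 0}\}$ as a subalgebra of $\mathscr{A}^{\mathrm{split}}(\omega,\{1\})$ and identify it with the Grothendieck ring of finite-dimensional $\Fsl_2$-representations. First I would recall that $K_0\big(\mathrm{Rep}_{\mathrm{fd}}(\Fsl_2)\big)$, as a commutative ring, has $\Z$-basis $\{[L(n)]\}_{n\ge 0}$ with multiplication governed by the Clebsch--Gordan rule
\[
[L(a)]\,[L(b)]=\sum_{k=0}^{\min\{a,b\}}[L(a+b-2k)],\qquad a,b\in\Z_{\ge 0}.
\]
So it suffices to check that the assignment $[L(n)]\mapsto u(1,n+1)$ is multiplicative and injective, since it is by construction a $\C$-linear isomorphism onto the span of the $u(1,n+1)$.

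Next I would invoke Lemma \ref{lem:tensor-jordan} with $\xi=\eta=1$. Setting $a=m+1$, $b=n+1$, the lemma gives
\[
u(1,m+1)\,u(1,n+1)=\sum_{k=1}^{\min\{m+1,n+1\}}u\big(1,\,(m+1)+(n+1)-(2k-1)\big).
\]
Re-indexing with $k'=k-1$, the summand becomes $u\big(1,\,m+n+1-2k'+1\big)=u\big(1,(m+n-2k')+1\big)$ and $k'$ runs from $0$ to $\min\{m,n\}$, which matches exactly the Clebsch--Gordan sum above under the correspondence $[L(j)]\leftrightarrow u(1,j+1)$. Since the monoid is trivial the product of $\xi=\eta=1$ stays in $\C^\times$ (it equals $1$), so no issue arises there; this is the crucial point that the image is closed under multiplication. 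Injectivity is immediate because the $u(\xi,a)$ form a basis of $\mathscr{A}^{\mathrm{split}}(\omega,\{1\})$ and the elements $u(1,n+1)$, $n\ge 0$, are a subfamily of basis vectors, hence linearly independent.

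I do not anticipate a serious obstacle: everything reduces to the combinatorial identity between the Jordan-block fusion rule of Lemma \ref{lem:tensor-jordan} and the Clebsch--Gordan rule, which is the standard statement that tensoring two Jordan blocks at eigenvalue $1$ mirrors tensoring two irreducible $\Fsl_2$-modules (both are instances of the decomposition of tensor products over $\mathfrak{sl}_2$, or equivalently of $\binom{\text{Sym}}{}$-module fusion). The only thing to be careful about is the off-by-one bookkeeping between "size of Jordan block" ($a$) and "highest weight" ($n=a-1$), and the shift $2k-1$ versus $2k$ in the two summation conventions; the re-indexing above handles this. One could optionally remark that this embedding is not a graded map in the $\mathsf{M}$-grading — but here $\mathsf{M}=\{1\}$ so there is nothing to say — and that it is not surjective, since $\mathscr{A}^{\mathrm{split}}(\omega,\{1\})$ also contains all $u(\xi,a)$ with $\xi\neq 1$.
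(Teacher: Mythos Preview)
Your proof is correct and follows precisely the approach the paper intends: the paper states only that the result ``follows immediately from Lemma \ref{lem:tensor-jordan} and the properties of the representations of the Lie algebra $\Fsl_2$,'' and you have supplied exactly those details, matching the Jordan-block fusion rule at $\xi=\eta=1$ with the Clebsch--Gordan decomposition via the re-indexing $k'=k-1$.
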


\begin{Remark}
Note that the category of weight modules over the skew Laurent polynomial ring is not semisimple. So the category $A(1)\mathrm{-wmod}_\omega$ is not equivalent to the category of finite-dimensional representations of $\mathfrak{sl}_2$.
\end{Remark}

\begin{Theorem}\label{thm:nobr-split-ring}
$\mathscr{A}^{\mathrm{split}}(\omega,\{1\})$ is generated by \begin{equation}\label{eq:trivial-monoid-generators}
\{u(\xi,1)\mid \xi\in\C^\times\}\cup \{u(1,2)\}
\end{equation}
subject only to the relations
\begin{equation}\label{eq:prod-c-times}
u(\xi,1)u(\eta,1)=u(\xi\eta,1)\quad\text{ for all }\xi,\eta\in\C^\times.
\end{equation}
In other words, $\mathscr{A}^{\mathrm{split}}(\omega,\{1\})$ is isomorphic to the algebra of polynomials in one indeterminate over the group algebra of $\C^\times$:
\begin{equation}
\mathscr{A}^{\mathrm{split}}(\omega,\{1\})\cong \C[\C^\times][u(1,2)]
\end{equation}
\end{Theorem}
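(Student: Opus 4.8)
The plan is to establish the two claims of the theorem separately: that the listed elements generate $\mathscr{A}^{\mathrm{split}}(\omega,\{1\})$, that the relation \eqref{eq:prod-c-times} holds and is complete, and that these facts together give the stated polynomial-ring presentation. Throughout, the key computational input is Lemma \ref{lem:tensor-jordan}, which reduces everything to the arithmetic of tensor products of Jordan blocks (equivalently, Clebsch--Gordan for $\Fsl_2$).

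First I would prove that the generating set \eqref{eq:trivial-monoid-generators} works. The basis elements are $u(\xi,a)$ for $\xi\in\C^\times$, $a\ge 1$. For $a=1$ these are among the generators. For $a\ge 2$ I would argue by induction on $a$: by Lemma \ref{lem:tensor-jordan}, $u(1,2)\,u(\xi,a-1)=\sum_{k=1}^{\min\{2,a-1\}}u(\xi,\,a+1-(2k-1))$, which for $a-1\ge 2$ equals $u(\xi,a)+u(\xi,a-2)$ and for $a-1=1$ equals $u(\xi,2)$. In the first case $u(\xi,a)=u(1,2)u(\xi,a-1)-u(\xi,a-2)$ lies in the subalgebra generated by the listed elements by the inductive hypothesis; in the base cases $u(\xi,2)=u(1,2)u(\xi,1)$ and $u(\xi,1)$ is a generator. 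Hence everything is generated.

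Next I would verify relation \eqref{eq:prod-c-times}: taking $a=b=1$ in Lemma \ref{lem:tensor-jordan} gives $u(\xi,1)u(\eta,1)=u(\xi\eta,1)$ directly. This says precisely that the elements $u(\xi,1)$ span a copy of the group algebra $\C[\C^\times]$ inside $\mathscr{A}^{\mathrm{split}}(\omega,\{1\})$. Combined with the previous paragraph, there is a surjective algebra homomorphism $\Phi\colon \C[\C^\times][T]\to \mathscr{A}^{\mathrm{split}}(\omega,\{1\})$ sending the group element $\xi$ to $u(\xi,1)$ and $T$ to $u(1,2)$. The remaining and main point is that $\Phi$ is injective, i.e.\ that \eqref{eq:prod-c-times} generates all relations.

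For injectivity I would exhibit an explicit $\C$-basis of the target in bijection with the obvious monomial basis $\{\,\xi\, T^n \mid \xi\in\C^\times,\ n\ge 0\,\}$ of $\C[\C^\times][T]$, and show $\Phi$ carries one to the other up to a unitriangular change of basis. Concretely, $\Phi(\xi T^n)=u(\xi,1)\,u(1,2)^n$, and I would prove by induction on $n$, using Lemma \ref{lem:tensor-jordan} repeatedly (this is the tensor-power decomposition of the $2$-dimensional $\Fsl_2$-module, giving $u(1,2)^n=\sum_{a}m_{n,a}\,u(1,2a-n+?)$ with the standard Catalan-type multiplicities), that $u(\xi,1)u(1,2)^n = u(\xi,n+1) + (\text{$\Z_{\ge 0}$-combination of }u(\xi,m)\text{ with }m<n+1,\ m\equiv n+1\!\!\pmod 2)$. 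Since $\{u(\xi,a)\}$ is a basis and the leading term $u(\xi,n+1)$ has top second-index, the matrix expressing $\{\Phi(\xi T^n)\}$ in terms of $\{u(\xi,a)\}$ is block-diagonal over $\xi$ and unitriangular in $n$, hence invertible. Therefore $\{\Phi(\xi T^n)\}$ is a basis of $\mathscr{A}^{\mathrm{split}}(\omega,\{1\})$, so $\Phi$ is a bijection, proving both that \eqref{eq:prod-c-times} is a complete set of relations and that $\mathscr{A}^{\mathrm{split}}(\omega,\{1\})\cong\C[\C^\times][u(1,2)]$. The main obstacle is precisely this last step: one must control the full expansion of $u(1,2)^n$ (or iterate Lemma \ref{lem:tensor-jordan} carefully) to be sure the change-of-basis is unitriangular rather than merely triangular with unknown diagonal; the parity bookkeeping in Lemma \ref{lem:tensor-jordan} (the summands have second index $a+b-(2k-1)$, all of the same parity as $a+b-1$) is what makes this clean.
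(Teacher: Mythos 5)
Your proof is correct and follows essentially the same route as the paper's: generate $u(\xi,a)$ by an induction driven by the recursion $u(1,2)u(\cdot,a-1)=u(\cdot,a)+u(\cdot,a-2)$ from Lemma \ref{lem:tensor-jordan}, read off relation \eqref{eq:prod-c-times} from the $a=b=1$ case, and then establish completeness of the relations via the triangular expansion $u(\xi,1)u(1,2)^n=u(\xi,n+1)+\text{lower terms}$. The only organizational difference is that the paper first shows $u(1,a)$ lies in the subalgebra and then multiplies by $u(\xi,1)$, whereas you induct on $u(\xi,a)$ directly, and you are somewhat more explicit than the paper in spelling out that the change of basis from $\{u(\xi,1)u(1,2)^n\}$ to $\{u(\xi,a)\}$ is block-diagonal in $\xi$ and unitriangular in $n$ (the paper only states linear independence of the powers $u(1,2)^n$, leaving the reader to combine this with the $\xi$-block structure); your extra care here is a genuine, if small, improvement in rigor.
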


\begin{proof}
First of all, by induction on $a\ge 1$ we show that $u(1,a)$ belongs to the subalgebra generated by \eqref{eq:trivial-monoid-generators}. For $a=1,2$, $u(1,a)$ is one of the generators, so this is clear. If $a>2$, then by Lemma \ref{lem:tensor-jordan} we have that 
$$u(1,a-1)u(1,2)=u(1,a)+u(1,a-2),$$ so 
$$u(1,a)=u(1,a-1)u(1,2)-u(1,a-2)$$ 
is in the subalgebra by the inductive hypothesis. Then, again by Lemma \ref{lem:tensor-jordan}, we have that $u(\xi,a)=u(\xi,1)u(1,a)$ which shows that the elements in \eqref{eq:trivial-monoid-generators} do generate the whole $\mathscr{A}^{\mathrm{split}}(\omega,\{1\})$. 

The fact that relation \eqref{eq:prod-c-times} holds follows immediately from Lemma \ref{lem:tensor-jordan} and, given that, for different $\xi\in \C^\times$, $u(\xi,1)$ are classes of non-isomorphic simple (hence indecomposable) modules, we get that the subalgebra generated by $u(\xi,1)$, $\xi\in\C^\times$ is indeed isomorphic to the group algebra $\C[\C^\times]$.

Finally, we only need to observe that different powers of $u(1,2)$ are linearly independent in $\mathscr{A}^{\mathrm{split}}(\omega,\{1\})$ and this is true because by using Lemma \ref{lem:tensor-jordan} several times, we can write 
$$u(1,2)^n=u(1,n+1)+\text{lower terms},$$ 
where the lower terms only involve $u(1,a)$ with $a\leq n$.
\end{proof}

\subsection{Split Grothendieck ring for single break monoids}

As in Section \ref{sec:triv-monoid}, we let $R=\C[z,z^{-1}]$, $\si(z)=q^{-1}z$ with $q=e^{\frac{2\pi i}{p}}$ a primitive $p$-th root of unity, $p\in\Z_{>0}$. We choose $\mf_0 = (z-1)$, so that we have the orbit $\om=\{(z-q^k)\}_{k=0}^{p-1}$, with $\Fm_i=\si^i(\mf_0)=(z-q^i)$.

In this section we restrict our attention to the case of at most one break. More specifically, fix $i\in\{0,1,\ldots,p-1\}$ and let $\mathsf{M}_i$ be the multiplicative submonoid of $R$ generated by $\{z-q^i\}$. Thus if $t\in\mathsf{M}_i$ then $t=(z-q^i)^k$ for some non-negative integer $k$, hence we consider the split Grothendieck ring

\[\mathscr{A}^{\mathrm{split}}(\omega,\mathsf{M}_i)=\bigoplus_{t\in\mathsf{M}_i}\C\otimes_\Z K_0^{\mathrm{split}}\big(A(t)\mathrm{-wmod}_\omega\big)=\bigoplus_{k\geq 0}\C\otimes_\Z K_0^{\mathrm{split}}\big(A((z-q^i)^k)\mathrm{-wmod}_\omega\big).\]

Even describing this algebra is quite involved and we will only obtain partial results.

It is easy to see that for any $i=1,\ldots,p-1$, $\mathscr{A}^{\mathrm{split}}(\omega,\mathsf{M}_i)$ is isomorphic to $\mathscr{A}^{\mathrm{split}}(\omega,\mathsf{M}_0)$, and therefore we assume $i=0$ in the rest of this section.

For $j\ge 1$, let
\begin{equation}
\mathscr{A}_{(j)}=\mathscr{A}^{\mathrm{split}}(\omega,\mathsf{M}_0)_{(j)}
\end{equation}
be the subspace of the split Grothendieck ring spanned by equivalence classes of indecomposable weight modules of length at most $j$. In particular, $\mathscr{A}_{(1)}$ is spanned by the classes of simple weight modules.

\begin{Theorem}\label{thm:section}
The subspace $\mathscr{A}_{(1)}$ is a subalgebra of the split Grothendieck ring $\mathscr{A}^{\mathrm{split}}(\omega,\mathsf{M}_0)$. As a consequence, the canonical surjective algebra map $\pi:\mathscr{A}^{\mathrm{split}}(\omega,\mathsf{M}_0)\ronto\mathscr{A}(\omega,\mathsf{M}_0)$ has a section which gives an isomorphism of algebras $ \mathscr{A}(\omega,\mathsf{M}_0)\simeq \mathscr{A}_{(1)}$.
\end{Theorem}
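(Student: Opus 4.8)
The plan is to prove the key claim that $\mathscr{A}_{(1)}$ is closed under multiplication, and then deduce the section statement formally. The simple weight modules for $A((z-q^0)^k)$ supported on $\omega$ with at most one break come in three families (by Remark~\ref{rem:indec-nobr}, Remark~\ref{rem:hat-w}, and the remark after Definition~\ref{def:type1mod}): the modules $V^1(\om,1^p,x-\xi)$ for $\xi\in\C^\times$ (when $k=0$, no break), the modules $V^{(z-1)^k}(\om,\widehat{x^1},x-\xi)=V^{(z-1)^k}(\om,1^{0}x1^{p-1},x-\xi)$ and $V^{(z-1)^k}(\om,1^{0}y1^{p-1},x-\xi)$ (the simple modules with a single break and ``going around the circle once''), and the modules $V^{(z-1)^k}(\om,0,1^{p-1})$ (the one-dimensional-weight-space ``interval'' modules of Definition~\ref{def:type1mod} with $w=1^{p-1}$). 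First I would list these explicitly and observe that each has one-dimensional weight spaces; in the notation of Theorem~\ref{thm:groth-ring} and its proof, these are exactly the classes $u_\xi$, $u_\xi y_0$, $u_\xi y_0^\ast$, $x_0$ (all living in degrees $(z-1)^k$, $k=0,1$, and their obvious higher-degree analogues $u_\xi(y_0)^a$ etc.).

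Next I would show closure directly from the tensor-product lemmas of Section~\ref{sec:definitions}, restricting all the words to powers of a single letter. The relevant computations are: Lemma~\ref{lemma:tens-FG} for $u_\xi\cdot u_\eta$; Lemma~\ref{lem:2nd-kind-tensor} with $w=1^{i-1}x1^{p-i}$ type words (single break) for products $y_0\cdot y_0$, $y_0^\ast\cdot y_0^\ast$, and—crucially—$y_0\cdot y_0^\ast$, where the monoid rule $x\cdot y=0$ of Definition~\ref{def:monoid} forces the word $1^{0}(x\cdot y)1^{p-1}=1^{0}01^{p-1}$, which by Lemma~\ref{lemma:fakecirc} collapses to a module $V^{tt'}(\om,\bar q,1^{p-1})$, i.e.\ again a simple module in $\mathscr{A}_{(1)}$; Lemma~\ref{lem:1st-tensor-1st} with $w=w'=1^{p-1}$ for $x_0\cdot x_0$ (giving again $V(\om,0,1^{p-1})$, simple); and Lemma~\ref{lem:1st-tensor-2nd} or Lemma~\ref{lem:1st-tensor-nob} for $x_0\cdot u_\xi$ and $V(\om,0,1^{p-1})\otimes V(\om,0,1^{p-1})$ type mixed products. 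In every case the output is a direct sum of simple weight modules (no Jordan-type non-split extensions appear, because all indecomposables involved have one-dimensional weight spaces and the companion matrix is always $[1]$, so the ``$F$'' bookkeeping never creates a nontrivial Jordan block). Hence each product of basis elements of $\mathscr{A}_{(1)}$ lies in $\mathscr{A}_{(1)}$, proving it is a subalgebra.

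For the second assertion, recall that the canonical surjection $\pi$ of split-Grothendieck onto Grothendieck rings sends the class of a module to the alternating sum (here: plain sum) of the classes of its composition factors; it is an algebra map because tensoring is biexact on weight spaces (Lemma~\ref{lemma:weightmod}). On $\mathscr{A}_{(1)}$ the map $\pi$ is injective, since a linear combination of classes of pairwise non-isomorphic simple modules is zero in the Grothendieck ring only if it is trivial, and conversely every simple class in $\mathscr{A}(\omega,\mathsf{M}_0)$ is hit by its own class in $\mathscr{A}_{(1)}$; thus $\pi|_{\mathscr{A}_{(1)}}:\mathscr{A}_{(1)}\to\mathscr{A}(\omega,\mathsf{M}_0)$ is a bijective algebra homomorphism, hence an isomorphism, and its inverse composed with the inclusion $\mathscr{A}_{(1)}\hookrightarrow\mathscr{A}^{\mathrm{split}}(\omega,\mathsf{M}_0)$ is the desired section. (Equivalently, the statement ``tensor products of semisimple weight modules with $t\in\mathsf{M}_0$ are semisimple'' is literally the closure of $\mathscr{A}_{(1)}$ under multiplication, so the two formulations coincide.)

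The main obstacle is the bookkeeping in the single nontrivial case $y_0\cdot y_0^\ast$ and more generally $(y_0)^a\cdot(y_0^\ast)^b$: one must check that when the word rule produces a letter $0$, the resulting module genuinely decomposes as in Lemma~\ref{lemma:fakecirc}/Remark~\ref{rem:decomp-type1} into a direct sum of the simple interval modules $V(\om,\bar q,1^{p-1})$, with no hidden extension surviving. This is exactly where the restriction to the monoid $\mathsf{M}_0$ (powers of one linear polynomial, so every companion matrix in sight equals $[1]$) is used: it guarantees all the ``$F$'' data are trivial, so the only mechanism that could create a non-split indecomposable is absent. I would make this precise by tracking, in each application of the Section~\ref{sec:definitions} lemmas, that the resulting $F\otimes G$ (or $F^r$, or $f^{[r]}$) is again the identity on a one-dimensional space, and then invoke the classification remarks to identify each summand as simple.
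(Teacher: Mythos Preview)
Your proposal is correct and follows essentially the same approach as the paper: enumerate the simple weight modules (the paper writes them as $u_\xi$, $x_a$, $y_{a,\xi}$, $y^\ast_{a,\xi}$), compute every pairwise tensor product via the lemmas of Section~\ref{sec:definitions}, and observe that each result is semisimple---in fact the paper checks that each such product is a single simple module, which slightly streamlines the case analysis you sketch (in particular the $y\otimes y^\ast$ case collapses directly to $x_{a+b}$ via Lemma~\ref{lemma:fakecirc}, as you anticipated). The paper constructs the section as the semisimplification map $\alpha$ rather than as $(\pi|_{\mathscr{A}_{(1)}})^{-1}$, but this is equivalent to your formulation.
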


\begin{proof} We can define a map $\alpha:\mathscr{A}(\omega,\mathsf{M}_0)\to \mathscr{A}^{\mathrm{split}}(\omega,\mathsf{M}_0)$ by sending a module to the class of the direct sum of its simple subquotients. It is clear that $\alpha$ is a well defined linear map on the Grothendieck group, $\Im(\alpha)=\mathscr{A}_{(1)}$, and $\pi\circ \alpha=\Id$, so it gives a section of the canonical surjective map. Therefore, the result follows once it is established that $\mathscr{A}_{(1)}$ is a subalgebra, for which it suffices to show that the tensor product of two simple weight modules is semisimple. Since the only break is $(z-1)$, from the classification in \cite{DGO}, all the simple weight modules are of one of the following forms (we will use variables to denote actual modules instead of classes):
\begin{align*}
& u_\xi=V^1(\om,1^p,x-\xi), & \xi\in\C^\times,\\
& x_{a}=V^{(z-1)^a}(\om,0,1^{p-1}), & a\geq 1,\\
& y_{a,\xi}=V^{(z-1)^a}(\om,1^{p-1}x,x-\xi), & a\geq 1,~\xi\in\C^\times,\\
& y^*_{a,\xi'}=V^{(z-1)^a}(\om,1^{p-1}y,x-\xi), & a\geq 1,~\xi\in\C^\times.
\end{align*}
Then, using Lemma \ref{lemma:tens-FG}, Corollary \ref{cor:tens-f-x-1}, and Lemma \ref{lem:2nd-kind-tensor}, we have $u_\xi\otimes_R u_{\xi'}\simeq u_{\xi\xi'}$, $u_\xi\otimes_R y_{a,\xi'}\simeq y_{a,\xi\xi'}$, $u_\xi\otimes_R y^*_{a,\xi'}\simeq y^*_{a,\xi\xi'}$, $y_{a,\xi}\otimes_R y_{b,\xi'}\simeq y_{a+b,\xi\xi'}$, $y^*_{a,\xi}\otimes_R y^*_{b,\xi'}\simeq y^*_{a+b,\xi\xi'}$, $y_{a,\xi}\otimes_R y^*_{b,\xi'}\simeq x_{a+b}$. From Lemma \ref{lem:1st-tensor-1st}  we get that $x_a\otimes_R x_b\simeq x_{a+b}$, and from Lemma \ref{lem:1st-tensor-nob} we have $u_\xi\otimes_R x_a\simeq x_a$. Finally, using Corollary \ref{cor:tens-f-x-1}, together with Lemma \ref{lem:1st-tensor-2nd}, we get that $x_a\otimes_R y_{b,\xi}\simeq x_{a+b}\simeq x_a\otimes_R y^*_{b,\xi}$. Hence each pairwise tensor product of two simple weight modules is again simple.
\end{proof}

\begin{Remark}
The subspaces $\mathscr{A}_{(j)}$ where $j\ge 2$ are not subalgebras of the split Grothendieck ring, for example $V^1(\om,1^p,(x-1)^2)$ has length $2$, but $$V^1(\om,1^p,(x-1)^2)\otimes_R V^1(\om,1^p,(x-1)^2)\simeq V^1(\om,1^p,(x-1)^3)\oplus V^1(\om,1^p,x-1)$$
so we can get indecomposable elements of length $3$ as tensor products of modules of length $2$.
\end{Remark}

\begin{Remark}\label{rem:tensor-products-more-breaks}
In the case of the choice of a monoid that has elements with more than a single break on the orbit, it is not true that tensor products of simple weight modules remains semisimple.
For example, let $p=3$, $t=(z-1)$, $t'=(z-q)(z-q^2)$, then we have two simple modules $V^t(\om, w,x-1)$, where $w=11x$, and $V^{t'}(\om,2,1)$, but then their tensor product is $V^{tt'}(\om,2,x)$ which is indecomposable but not semisimple (it is a nontrivial extension of $V^{tt'}(\om,2,\emptyset)$ and $V^{tt'}(\om,0,\emptyset)$). See Figure \ref{fig:simple-tensor-simple-is-not-semisimple}.

\begin{figure}
\[
\begin{tikzpicture}[baseline=-.5ex, vertex/.style={draw,circle,fill=white,minimum size=5pt, inner sep=0pt}]
\node[vertex] (A) at (1,0) {};
\node[vertex] (F) at ({cos(240)},{sin(240)}) {}; \node[vertex] (H) at ({cos(120)},{sin(120)}) {}; 
\draw[<->] (F) edge[out=330, in=270] (A);
\draw[->] (A) edge[out=90, in=30] (H);
\draw[<->] (H) edge[out=210, in=150] (F);
\end{tikzpicture}
%
\;\;\otimes\;\;
\begin{tikzpicture}[baseline=-.5ex, vertex/.style={draw,circle,fill=white,minimum size=5pt, inner sep=0pt}]
\node[vertex] (A) at (1,0) {};
\node[vertex] (H) at ({cos(120)},{sin(120)}) {}; 
\draw[<->] (A) edge[out=90, in=30] (H);
\end{tikzpicture}
%
%
\;\;=\;\;
\begin{tikzpicture}[baseline=-.5ex, vertex/.style={draw,circle,fill=white,minimum size=5pt, inner sep=0pt}]
\node[vertex] (A) at (1,0) {};
\node[vertex] (H) at ({cos(120)},{sin(120)}) {}; 
\draw[->] (A) edge[out=90, in=30] (H);
\end{tikzpicture}
%
\]
\caption{When we allow orbits with more than one break, the tensor products of two simple modules need not be semisimple.}
\label{fig:simple-tensor-simple-is-not-semisimple}
\end{figure}
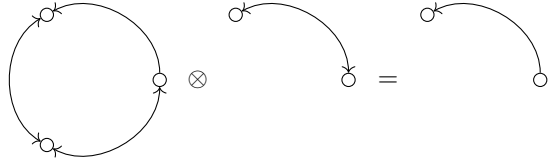
\end{Remark}

Recall the ideal $\mathscr{I}(\om,\mathsf{M}_0)$ from Definition \ref{def:ideal}, the following theorem gives a description of the structure of the quotient ring $\mathscr{A}^{\mathrm{split}}(\omega,\mathsf{M}_0)/\mathscr{I}(\om,\mathsf{M}_0)$. Recall also the action of $\Z$ by cyclic shifts on the subset of $\boldsymbol{D}_\om^{(z-1)}$ consisting of $p$-words (words whose length is a multiple of $p$) from Definition \ref{def:word-shift}. We let $\boldsymbol{D}_{\om,\operatorname{np}}^{(z-1)}\subset\boldsymbol{D}_\om^{(z-1)}$ be the subset of $p$-words that do not contain any letter $0$ and are non-periodic. We observe that the $\Z$-action by cyclic shift restricts to an action on $\boldsymbol{D}_{\om,\operatorname{np}}^{(z-1)}$. If $w\in\boldsymbol{D}_{\om,\operatorname{np}}^{(z-1)}$, we will denote by $\overline{w}$ its $\Z$-orbit in $\boldsymbol{D}_{\om,\operatorname{np}}^{(z-1)}/\Z$.

\begin{Theorem}\label{thm:split-quotient}
The quotient algebra $\mathscr{A}^{\mathrm{split}}(\omega,\mathsf{M}_0)/\mathscr{I}(\om,\mathsf{M}_0)$ is isomorphic to the commutative $\C$-algebra with generators
$u_\xi$, $\xi\in\C^\times$, $u(1,2)$, and $y_{\overline{w}}$, $\overline{w}\in \boldsymbol{D}_{\om,\operatorname{np}}^{(z-1)}/\Z$ subject to the relations
\begin{enumerate}[{\rm (i)}]
\item $u_1=1$,
\item $u_\xi u_{\xi'}=u_{\xi\xi'}$ for all $\xi,\xi'\in\C^\times$,
\item $u_\xi y_{\overline{w}}=y_{\overline{w}}$ if $w=w_1\cdots w_{rp}\in\boldsymbol{D}_{\om,\operatorname{np}}^{(z-1)}/\Z$ and $\xi^r=1$,
\item $y_{\overline{w}} y_{\overline{w}'}=0$ if $\overline{w}\neq \overline{w}'$.
\end{enumerate}
\end{Theorem}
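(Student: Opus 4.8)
The plan is to identify which indecomposable modules survive in the quotient, then show they are indexed exactly by the stated generators and satisfy the stated relations.

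\textbf{Step 1: describe a basis of the quotient.} A basis of $\mathscr{A}^{\mathrm{split}}(\omega,\mathsf{M}_0)$ is given by classes of indecomposable weight modules. By Remark~\ref{rem:indec-nobr} and Remark~\ref{rem:hat-w}, and since the only break is $\Fm_0=(z-1)$, the indecomposables fall into three families: those of the form $V^t(\om,i,w)$ with $B^t_\om\neq\emptyset$ (these span $\mathscr{I}(\om,\mathsf{M}_0)$ by definition); those of the form $V^1(\om,1^p,(x-\xi)^a)$ with $\xi\in\C^\times$, $a\ge1$ (the $t=1$ case, where $B^t_\om=\emptyset$); and those of the form $V^{(z-1)^k}(\om,w,f)$ with $k\ge1$, $w\in\boldsymbol{D}_{\om,\operatorname{np}}^{(z-1)}$ a non-periodic $p$-word with no letter $0$, and $f$ indecomposable. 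Modulo $\mathscr{I}(\om,\mathsf{M}_0)$, a basis of the quotient is thus given by the classes of the modules in the last two families. Using Corollary~\ref{cor:tens-f-x-1} and the substitution $f'^{[r]}=f$ (as in the proof of Proposition~\ref{prop:ideal}, valid since $\C$ is algebraically closed), every module in the third family can be written modulo $\mathscr{I}$ as $[V^1(\om,1^p,f')]\cdot[V^{(z-1)^k}(\om,w,x-1)]$; so the quotient is generated by the $u(\xi,a)=[V^1(\om,1^p,(x-\xi)^a)]$ together with the $[V^{(z-1)^k}(\om,w,x-1)]$ for $w\in\boldsymbol{D}_{\om,\operatorname{np}}^{(z-1)}$.

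\textbf{Step 2: reduce to the claimed generators.} By Theorem~\ref{thm:nobr-split-ring}, the subalgebra generated by the $u(\xi,a)$ is already generated by $\{u_\xi=u(\xi,1)\}\cup\{u(1,2)\}$ subject only to \eqref{eq:prod-c-times}, which gives relations (i) and (ii). For each $\Z$-orbit $\overline{w}\in\boldsymbol{D}_{\om,\operatorname{np}}^{(z-1)}/\Z$ define $y_{\overline{w}}=[V^{(z-1)^r}(\om,w,x-1)]$ where $r$ is such that $w$ has length $rp$; this is well-defined by Lemma~\ref{lemma:fakecirc} together with the isomorphism under cyclic shift of the word (the module $V^{(z-1)^r}(\om,w,x-1)$ depends only on $\overline{w}$, by the same kind of basis relabeling as in the proof of Lemma~\ref{lemma:fakecirc}). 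I must then check that every $[V^{(z-1)^k}(\om,w,x-1)]$ is, modulo $\mathscr{I}$, a product of a power of $u(1,2)$ (to account for $k>r$ via Corollary~\ref{cor:tens-f-x-1}, writing $(x-1)^{k/r}$ in place of $x-1$ and using $(F_{(x-1)^a})^{n}\sim F_{(x-1)^{[n]}}=F_{(x-1)^a}$... more precisely using that $[V^{(z-1)^k}(\om,w,x-1)]=u(1,k/r)\cdot y_{\overline w}$ when $r\mid k$, and is in $\mathscr{I}$ otherwise because then $w^{k/r}$ is periodic — here one uses Remark~\ref{rem:hat-w} to see a periodic word gives a decomposable module whose summands have breaks) and a single $y_{\overline w}$, with $u_\xi$'s. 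This shows the listed elements generate.

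\textbf{Step 3: verify the relations and linear independence.} Relation (iii): $u_\xi y_{\overline w}=[V^1(\om,1^p,x-\xi)]\cdot[V^{(z-1)^r}(\om,w,x-1)]=[V^{(z-1)^r}(\om,w,(x-\xi)^{[r]})]=[V^{(z-1)^r}(\om,w,x-\xi^r)]$ by Corollary~\ref{cor:tens-f-x-1}, which equals $y_{\overline w}$ precisely when $\xi^r=1$. Relation (iv): if $\overline w\neq\overline{w}'$ then by Lemma~\ref{lem:2nd-kind-tensor} the product $V^{(z-1)^r}(\om,w,x-1)\otimes_R V^{(z-1)^{r'}}(\om,w',x-1)$ decomposes as a direct sum of modules $V^{(z-1)^{r+r'}}(\om,w\otimes w'[j],\ast)$; since $\overline w\neq\overline{w}'$, every $w\otimes w'[j]$ contains a letter $0$ (a position where one word has $x$ and the other $y$, or... actually one needs: since $w,w'$ are each all-$\{1,x\}$ or all-$\{1,y\}$? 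No — here $w,w'\in\boldsymbol{D}_{\om,\operatorname{np}}^{(z-1)}$ may contain both $x$ and $y$). Here is the key point and the \emph{main obstacle}: I must show that for distinct orbits $\overline w\neq\overline{w}'$, each $w\otimes(w'[j])$ contains a $0$, so that by Lemma~\ref{lemma:fakecirc} the corresponding summand lies in $\mathscr{I}$. If instead some $w\otimes(w'[j])$ has no $0$, it would be a non-periodic $p$-word and contribute a genuine $y_{\overline{w''}}$ term; one must rule this out, presumably by a combinatorial argument showing that $w\otimes(w'[j])$ having no $0$ forces $w$ and $w'[j]$ to be letterwise equal on all of $\{1,\ldots,rr'p/d\}$, which by non-periodicity forces $r=r'$ and $\overline w=\overline{w}'$. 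This is the combinatorial heart of the proof. Finally, linear independence of the monomials $u_\xi\, u(1,2)^n \prod y_{\overline{w_i}}$ — modulo the relations, these should correspond to the distinct basis indecomposables identified in Step~1, giving that the presented algebra maps isomorphically onto the quotient; here one invokes that $u(1,2)^n=u(1,n+1)+(\text{lower})$ as in Theorem~\ref{thm:nobr-split-ring} and that products of distinct $y_{\overline w}$'s with positive $u(1,2)$-power index distinct indecomposables of the third family.
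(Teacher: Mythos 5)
Your Step~2 introduces a serious error by defining $y_{\overline{w}}=[V^{(z-1)^r}(\om,w,x-1)]$ where $rp$ is the length of $w$; the paper instead takes $y_{\overline{w}}=[V^{(z-1)}(\om,w,x-1)]$, a class in $\mathsf{M}_0$-degree $(z-1)$ regardless of the word length. The exponent of $(z-1)$ and the word-length multiple $r$ are \emph{independent} parameters: $B^{(z-1)^k}_\om=\{(z-1)\}$ for every $k\ge 1$, so for any fixed non-periodic $w\in\boldsymbol{D}_{\om,\operatorname{np}}^{(z-1)}$ of length $rp$ the modules $V^{(z-1)^k}(\om,w,x-1)$ exist and are pairwise non-isomorphic indecomposables for all $k\ge 1$. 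With your definition, the monomials $u_\xi\,u(1,2)^a\,y_{\overline{w}}^n$ live only in degrees divisible by $r$, so you cannot reach $[V^{(z-1)^k}(\om,w,x-1)]$ when $r\nmid k$. The patch you propose — that such classes ``are in $\mathscr{I}$ because then $w^{k/r}$ is periodic'' — is false: by Remark~\ref{rem:hat-w}, $V^{(z-1)^k}(\om,w,x-1)$ with $w$ non-periodic and without $0$'s is an indecomposable of the second kind $V^t(\om,w,f)$, not of the kind $V^t(\om,i,w)$, and hence is never in $\mathscr{I}$. Moreover the claimed identity $[V^{(z-1)^k}(\om,w,x-1)]=u(1,k/r)\cdot y_{\overline{w}}$ cannot hold even on degree grounds, since $u(1,m)$ has degree $0$ while the two sides would then sit in degrees $(z-1)^r$ and $(z-1)^k$; and Corollary~\ref{cor:tens-f-x-1} gives $u(1,m)\cdot[V^{(z-1)^r}(\om,w,x-1)]=[V^{(z-1)^r}(\om,w,(x-1)^m)]$, which raises the Jordan block size, not the exponent of $(z-1)$. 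The mechanism the paper actually uses is $y_{\overline{w}}^n=[V^{(z-1)^n}(\om,w,x-1)]$ in the quotient, proved by squaring: Lemma~\ref{lem:2nd-kind-tensor} produces $\bigoplus_{j=0}^{r-1}V^{(z-1)^2}(\om,w\otimes w[j],\dots)$, and for $j\neq 0$ the word $w\otimes w[j]$ contains a $0$ since the non-$1$ letters of $w$ and of $w[j]$ both sit exactly at positions $\equiv 0 \pmod p$ and $w[j]\neq w$ by non-periodicity; those summands land in $\mathscr{I}$ via Lemma~\ref{lemma:fakecirc}, leaving only the $j=0$ term $V^{(z-1)^2}(\om,w,x-1)$.

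On Step~3: you are right that ``no $0$ in $w\otimes w'[j]$ forces $\overline{w}=\overline{w'}$'' is the combinatorial heart of relation~(iv), and your sketch (agreement on a length-$\operatorname{lcm}(r,r')p$ stretch plus non-periodicity forcing $r=r'$) is the right idea. The missing observation that makes it quick is again that the non-$1$ letters of every word in $\boldsymbol{D}^{(z-1)}_\om$ occur precisely at multiples of $p$; so ``no $0$'' means $w_i=(w'[j])_i$ at every such position, whence a common period dividing $\gcd(r,r')$, which by non-periodicity of both words forces $r=r'$ and $w=w'[j]$. That part of your write-up is salvageable; the Step~2 redefinition of $y_{\overline{w}}$ is the fatal problem, because it renders the map from the presented algebra to the quotient non-surjective.
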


\begin{proof}
By the classification in \cite{DGO}, all the classes of indecomposable weight modules in $\mathscr{A}^{\mathrm{split}}(\omega,\mathsf{M}_0)$ that are not in $\mathscr{I}(\om,\mathsf{M}_0)$ are of the form $[V^{(z-1)^n}(\om,w,(x-\xi)^a)]$, with $n\geq 0$, $a\geq 1$, and $w\in\boldsymbol{D}_{\om, \operatorname{np}}^{(z-1)^n}$ a non-periodic $p$-word that contains no letters $w_j=0$.
Since $V^{(z-1)^n}(\om,w,(x-\xi)^a)\simeq V^{(z-1)^n}(\om,w',(x-\xi)^a)$ if and only if $w'=w[j]$ for some $j\in \Z$, it is clear that, if we pick an arbitrary representative $w$ for each orbit $\overline{w}\in \boldsymbol{D}_{\om,\operatorname{np}}^{(z-1)^n}/\Z$, the set 
$$\{[V^{(z-1)^n}(\om,w,(x-\xi)^a)]~|~n\geq 0, ~a\geq 1,~\xi\in\C^\times,~\overline{w}\in \boldsymbol{D}_{\om,\operatorname{np}}^{(z-1)^n}/\Z \} $$
is a basis of the quotient $\mathscr{A}^{\mathrm{split}}(\omega,\mathsf{M}_0)/\mathscr{I}(\om,\mathsf{M}_0)$.

We claim that the quotient is generated as an algebra by the following elements:
\begin{align*}
u_\xi&=[V^{1}(\om,1^p,x-\xi)], ~\xi\in\C^\times, \\
y_{w}&=[V^{(z-1)}(\om,w,x-1)], ~\bar{w}\in \boldsymbol{D}_{\om, \operatorname{np}}^{(z-1)^n}/\Z,\\
u(1,2)&=[V^{1}(\om,1^p,(x-1)^2)].
\end{align*}

First of all, if $n=0$, then the only possible $w\in\boldsymbol{D}_{\om,\operatorname{np}}^{(z-1)^n}$ is $w=1^p$, so we have
$$V^{(z-1)^0}(\om,w,(x-\xi)^a)=V^{1}(\om,1^p,(x-\xi)^a)\simeq V^{1}(\om,1^p,x-\xi)\otimes_R V^{1}(\om,1^p,(x-1)^a) $$
by Lemma \ref{lemma:tens-FG}, hence these modules are generated in the Grothendieck ring by $u_\xi$, $\xi\in\C^\times$ and $u(1,2)$ exactly as in the Proof of Theorem \ref{thm:nobr-split-ring}.

If $n\geq 1$, by Corollary \ref{cor:tens-f-x-1}, if the length of $w$ is $rp$, $r\in\Z_{\geq 1}$, then we have
$$V^{(z-1)^n}(\om,w,(x-\xi)^a)\simeq V^{1}(\om,1^p,(x-\xi^{1/r})^a)\otimes_R V^{(z-1)^n}(\om,w,x-1) $$
where $\xi^{1/r}$ is any of the $r$-th roots of $\xi$. Then, by Lemma \ref{lemma:tens-FG} we have that
$$V^{1}(\om,1^p,(x-\xi^{1/r})^a)\simeq V^{1}(\om,1^p,x-\xi^{1/r})\otimes_R V^{1}(\om,1^p,(x-1)^a). $$
Hence
\begin{equation}\label{eq:decomp-uxi-a-n}[V^{(z-1)^n}(\om,w,(x-\xi)^a)]=u_{\xi^{1/r}}\cdot [V^{1}(\om,1^p,(x-1)^a)]\cdot [V^{(z-1)^n}(\om,w,x-1)].\end{equation}

Now we can use Lemma \ref{lem:2nd-kind-tensor} to compute 
\begin{equation*}V^{(z-1)}(\om,w,x-1)\otimes_R V^{(z-1)}(\om,w,x-1)\simeq \bigoplus_{j=0}^{r-1}V^{(z-1)^2}(\om,w\otimes (w[j]),x-(z-1)^{w,w[j]}(z-1)^{w[j],w})\end{equation*}

Notice that if $w\in\boldsymbol{D}_{\om,\operatorname{np}}^{(z-1)}$, with the length of $w$ being $rp$, then $w_i\neq 1$ if and only if $i=kp$ with $k=1,\ldots, r$. Then, for any shift we have that either $w[j]=w$, or the word $w\otimes (w[j])$ will contain a letter $0$ because if $w[j]\neq w$ that means that there is an index $i$ such that $w_i=x$ and $(w[j])_i=y$ or viceversa. Further, since $w$ is nonperiodic, the only index $j=0,\ldots, r-1$ such that $w=w[j]$ is $j=0$. Hence, by Lemma \ref{lemma:fakecirc}, for all $j=1,\ldots, r-1$, we have $[V^{(z-1)^2}(\om,w\otimes (w[j]),x-(z-1)^{w,w[j]}(z-1)^{w[j],w})]\in\mathscr{I}(\om,\mathsf{M}_0).$ Finally, we have that when $j=0$, $w\otimes (w[j])=w\otimes w=w$ and
$(z-1)^{w,w[j]}=(z-1)^{w,w}=1=(z-1)^{w[j],w}$,
so in the quotient $\mathscr{A}^{\mathrm{split}}(\omega,\mathsf{M}_0)/\mathscr{I}(\om,\mathsf{M}_0)$ we have
\begin{equation}\label{eq:z-1-squared}[V^{(z-1)}(\om,w,x-1)]^2=[V^{(z-1)}(\om,w,x-1)\otimes _R V^{(z-1)}(\om,w,x-1)]=[V^{(z-1)^2}(\om,w,x-1)].\end{equation}
By induction on $n$, using \eqref{eq:z-1-squared}, we then get that for all $n\geq 1$ and for all $w\in\boldsymbol{D}_{\om,\operatorname{np}}^{(z-1)}$
\begin{equation}\label{eq:z-1-tothe-n}[V^{(z-1)^n}(\om,w,x-1)]=[V^{(z-1)}(\om,w,x-1)]^n=(y_{\overline{w}})^n.\end{equation}
Putting together \eqref{eq:decomp-uxi-a-n}, \eqref{eq:z-1-tothe-n} and the same argument as in the Proof of Theorem \ref{thm:nobr-split-ring} which shows that any $[V^{1}(\om,1^p,(x-1)^a)]$ is a polynomial in $u(1,2)$, the statement about generators is proved.

Relations (1) and (2) follow in the same way as in the proof of Theorem \ref{thm:groth-ring}: $u_1$ is the multiplicative identity element in the Grothendieck ring from Lemmas \ref{lemma:tens-FG} (which also implies (2) ), \ref{lem:2nd-kind-tensor}, and \ref{lem:1st-tensor-2nd}. Relation (3) follows from Corollary \ref{cor:tens-f-x-1}. Relation (4) follows from Lemma \ref{lem:2nd-kind-tensor} and Lemma \ref{lemma:fakecirc}, since the word $w\otimes (w'[j])$ will always contain a letter of $0$, unless $w=w'[j]$ for some $j$, which means exactly that $\overline{w}=\overline{w}'$. So all the relations are satisfied.

Finally, we show that we found all the relations. Using relation (4), any monomial in the generators can be written to contain powers $(y_{\overline{w}})^n$ for at most one $\overline{w}\in\boldsymbol{D}_{\om,\operatorname{np}}^{(z-1)}/\Z$, and using relation (2) we can make it so there is at most one term of the form $u_\xi$, $\xi\in \C^\times$. If no terms like $(y_{\overline{w}})^n$ are present, then for some $\xi\in\C^\times$ and some $a\geq 0$, the monomial is of the form 
$$u_\xi u(1,2)^a = [V^1(\om,1^p,(x-\xi)^a)]+ \text{lower terms}$$ 
where the lower terms are a linear combination of classes of the form $[V^1(\om,1^p,(x-\xi)^b)]$, with $b<a$.

Otherwise, if a term $y_{\overline{w}}$ is in the monomial, with the length of $w$ being $rp$, then we let $U(r)=\{\xi\in\C^\times~|~\xi^r=1\}$ be the subgroup of $r$-th roots of unity, and we pick a fixed set of representatives for elements in the quotient group $\C^\times/U(r)$. Then we can use relation (3) to write the monomial in a unique way as $u_\xi u(1,2)^a (y_{\overline{w}})^n$, $\xi\in \C^\times/U(r).$

The map $\C^\times/U(r)\to \C^\times$ given by raising to the $r$-th power is a bijection, and we have
$$u_\xi u(1,2)^a (y_{\overline{w}})^n=[V^{(z-1)^n}(\om,w,(x-\xi^r)^a)]+ \text{lower terms}$$
where the lower terms are a linear combination of classes of the form $[V^{(z-1)^n}(\om,w,(x-\xi^r)^b)]$, with $b<a$.

It follows that all these different monomials correspond to classes of non-isomorphic indecomposable weight modules not in the ideal $\mathscr{I}(\om,\mathsf{M}_0)$ (plus lower terms), which are therefore linearly independent in $\mathscr{A}^{\mathrm{split}}(\omega,\mathsf{M}_0)/\mathscr{I}(\om,\mathsf{M}_0)$. This concludes the proof.
\end{proof}

\section{Graph modules}\label{sec:graph-mod}

In this section we consider a different approach to the problem of describing tensor products of weight modules.
The tensor product of two indecomposable weight modules from Definition \ref{def:type1mod} is in general not indecomposable. 
We consider certain graphs $(S,E)$ (with extra structure) and construct weight modules over $A(t)$ from them.
The class of these graph modules is closed under direct sums and tensor products. The corresponding Grothendieck ring is a subring of the full Grothendieck ring.

\subsection{Definition and properties}
We recall the setup from Section \ref{sec:definitions}.
Let $R$ be a commutative ring, $\si\in\Aut(R)$ be an automorphism of order $p\in\Z_{>0}$, $\om\in\MaxSpec(R)/\langle\si\rangle$ be a orbit of size $p$.  
In this section we also use the notation $\K_\Fm=R/\Fm$ for $\Fm\in\MaxSpec(R)$.
Let $\Fm_0\in\om$ a distinguished element and put $\K=\K_{\Fm_0}=R/\Fm_0$. Fix a regular element $t\in R$. Let $A(t)=R(\si,t)$ denote the generalized Weyl algebra.

Let $\Ga=(S,E)$ be a directed finite graph with vertex set $S$ and edge set $E\subseteq S^2$, such that each connected component of $\Ga$ is either a directed path or a directed cycle, equipped with a \emph{weight function} $\op{wt}:S\to\om$; an \emph{edge labeling} $\la:E\to\{1,x,y\}$; and a $2|E|$-tuple of \emph{scaling factors} $\al=(\al_\pm(e))_{e\in E}$
subject to the following conditions
\begin{enumerate}[{\rm (a)}]
\item $\forall (u,v)\in E:\,\op{wt}(v)=\si\big(\rm{wt}(u)\big)$, 
\item if $u\in S$ is a source then $\si^{-1}(\op{wt}(u))\in B^t_\om$,
\item if $v\in S$ is a sink then $\op{wt}(v)\in B^t_\om$,
\item $\forall e=(u,v)\in E:\,\la(e)=1\Leftrightarrow \op{wt}(u)\notin B^t_\om$ ($\Leftrightarrow t\notin\op{wt}(u)$),
\item $\forall e=(u^-,u^+)\in E:\,\al_\pm(e)\in \K_{\op{wt}(u^\pm)}\setminus\{0\}$, 
\item $\forall e=(u,v)\in E:\,\la(e)=1\Rightarrow \al_+(e)\si\big(\al_-(e)\big)=\si(t_{\op{wt}(u)})$, where $t_\Fm=t+\Fm\in R/\Fm$ for $\Fm\in\MaxSpec(R)$.
\end{enumerate}
Since some of these conditions depend on $t$, we call such $\op{wt}$, $\la$ and $\al$ \emph{$t$-adapted}. 
Thus $\Ga$ is specified completely by the quintuple $(S,E,\op{wt},\la,\al)$. Note that condition (a) forces the length of any cycle in $\Ga$ to be a multiple of $p$.
Define 
\[
V^t(\omega,\Ga)=\bigoplus_{\mf\in\om} V^t(\om,\Ga)_\mf,
\]
where $V^t(\om,\Ga)_\mf$ is the $R/\mf$-vector space with basis $\op{wt}^{-1}\big(\{\mf\}\big)$. Equip $V^t(\om,\Ga)$ with the structure of an $A(t)$-module by defining for all $u\in S$:
\begin{align*}
Xu&=\begin{cases} 
\al_+(e)v,& (u,v)=e\in E,\, \la(e)\neq y,\\
0,&\text{otherwise},
\end{cases}\\
Yv&=\begin{cases}
\al_-(e)u,& (u,v)=e\in E,\, \la(e)\neq x,\\
0,&\text{otherwise},
\end{cases}\\
ru&=(r+\op{wt}(u))u,\quad r\in R.
\end{align*}
It is straightforward to check that the defining relations for $A(t)$ are respected, making the action well-defined. Furthermore, $V^t(\omega,\Ga)_\Fm$ is the $\Fm$-weight space.

\begin{Lemma}
If $\Ga$ is the disjoint union of subgraphs $\Ga_1\sqcup \Ga_2$ (with restricted weight functions, edge labelings, and scaling factors) then $V^t(\om,\Ga)\cong V^t(\om,\Ga_1)\oplus V^t(\om,\Ga_2)$.
\end{Lemma}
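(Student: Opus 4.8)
The plan is to unwind the definitions and exhibit the claimed isomorphism directly as a map of vector spaces which happens to be $A(t)$-linear. Since $\Ga = \Ga_1 \sqcup \Ga_2$, its vertex set is the disjoint union $S = S_1 \sqcup S_2$, its edge set is $E = E_1 \sqcup E_2$ (there are no edges of $\Ga$ with one endpoint in $S_1$ and the other in $S_2$, because each connected component of $\Ga$ lies entirely in $\Ga_1$ or entirely in $\Ga_2$), and the weight function, edge labeling and scaling factors of $\Ga$ restrict to those of $\Ga_1$ and $\Ga_2$. Consequently, for each $\Fm \in \om$ we have $\op{wt}^{-1}(\{\Fm\}) = \op{wt}_1^{-1}(\{\Fm\}) \sqcup \op{wt}_2^{-1}(\{\Fm\})$, so the $R/\Fm$-vector space with basis $\op{wt}^{-1}(\{\Fm\})$ is the direct sum of the $R/\Fm$-vector spaces with bases $\op{wt}_1^{-1}(\{\Fm\})$ and $\op{wt}_2^{-1}(\{\Fm\})$. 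Summing over $\Fm \in \om$ gives a canonical isomorphism of $R$-modules $\phi: V^t(\om,\Ga_1) \oplus V^t(\om,\Ga_2) \xrightarrow{\ \sim\ } V^t(\om,\Ga)$, sending a basis vertex $u \in S_i$ (viewed inside the $i$-th summand) to the same vertex $u \in S$.

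Next I would verify that $\phi$ intertwines the $A(t)$-actions. It suffices to check this on the basis vertices $u \in S_i$ and on the algebra generators $X$, $Y$, and $r \in R$. The action of $r \in R$ is by $ru = (r + \op{wt}(u))u$, and since $\op{wt}$ restricts to $\op{wt}_i$ on $S_i$, this matches on both sides. For $X$: the formula $Xu = \al_+(e)v$ when $(u,v) = e \in E$ with $\la(e) \neq y$, and $Xu = 0$ otherwise, depends only on whether $u$ is the source of an edge of $\Ga$ and, if so, on the data $\al_+(e)$, $\la(e)$ and the target $v$ of that edge. Because any edge out of $u \in S_i$ is an edge of $\Ga_i$ with the same label, scaling factor and target, the action of $X$ on $u$ computed in $V^t(\om,\Ga)$ agrees with its action computed in $V^t(\om,\Ga_i)$. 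The same argument applies to $Y$, using that any edge into $v \in S_i$ is an edge of $\Ga_i$. Hence $\phi$ is an isomorphism of $A(t)$-modules.

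There is no real obstacle here: the statement is essentially the observation that the module construction $\Ga \mapsto V^t(\om,\Ga)$ is defined component-by-component on the graph, and disjoint union of graphs corresponds to direct sum of modules. The only point requiring a word of care is that $\Ga_1$ and $\Ga_2$, as subgraphs of $\Ga$, must themselves satisfy conditions (a)--(f) with the same $t$ so that $V^t(\om,\Ga_i)$ is defined; but each of those conditions is a condition on individual vertices or edges (or, for (b) and (c), on sources and sinks, and a source or sink of $\Ga_i$ is in particular a source or sink of $\Ga$), so they are inherited by any subgraph that is a union of connected components. This is exactly what the parenthetical ``with restricted weight functions, edge labelings, and scaling factors'' in the statement records, and it completes the proof.
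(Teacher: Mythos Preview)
Your proof is correct and follows the same approach as the paper, which simply states ``Immediate by definition of the action of $X$ and $Y$.'' You have carefully unpacked exactly that: the module construction is vertex- and edge-local, so disjoint union becomes direct sum, and your additional remark that conditions (a)--(f) are inherited by unions of connected components is a helpful elaboration.
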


\begin{proof}
Immediate by definition of the action of $X$ and $Y$.
\end{proof}

\begin{Lemma}\label{lem:graph-modules-no-breaks}
Suppose $\om$ has no breaks. Let $\Ga$ be a $t$-adapted graph, and suppose $\Ga$ has a single connected component (necessarily a cycle). Then $V^t(\om,\Ga)\cong V^t(\om,1^p,f)$ where $f(x)=x^d-\xi$ for some $\xi\in\K$. Conversely, any module of the form $V^t(\om,1^p,x^d-\xi)$ occurs this way.
\end{Lemma}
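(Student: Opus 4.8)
The plan is to set up the module $V^t(\om,\Ga)$ explicitly for a single cycle, choose a good basis adapted to the companion matrix structure, and then extract the polynomial $f$ from the ``return map'' around the cycle.

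First I would observe that since $\om$ has no breaks, condition (d) forces $\la(e)=1$ for every edge $e$, so all edge labels are trivial and both $X$ and $Y$ act along every edge. A single connected component which is a cycle must, by condition (a), have length $rp$ for some $r\ge 1$; label its vertices $u_1,u_2,\dots,u_{rp}$ in cyclic order so that $(u_k,u_{k+1})\in E$ (indices mod $rp$) and $\op{wt}(u_{k+1})=\si(\op{wt}(u_k))$. After cyclically relabeling we may assume $\op{wt}(u_k)=\si^k(\mf_0)$, and in particular $\op{wt}(u_{rp})=\mf_0$, so the vertices with weight $\mf_0$ are exactly $u_p,u_{2p},\dots,u_{rp}$, giving $\dim V^t(\om,\Ga)_{\mf_0}=r$. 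Comparing with Definition \ref{def:finorb-circle}, $V^t(\om,1^p,F)$ with $F\in\Aut_\K(\K^d)$ has $\dim V^t(\om,1^p,F)_{\mf_0}=d\cdot\frac{(\text{length of }w)}{p}$; so to match dimensions we will need $d=1$ and the word of length $rp$... but wait, that is not quite right: $V^t(\om,1^p,F)$ in the notation of Def.\ \ref{def:finorb-circle} has word $w=1^{rp}$ (the superscript $p$ in $V^t(\om,1^p,F)$ is a slight abuse, it means $w=1^{rp}$ with $F$ a $d\times d$ matrix and $rp = $ the word length); so matching dimensions forces $d=1$ with an $r\times r$... no. The cleanest route is: show $V^t(\om,\Ga)\cong V^t(\om,1^{rp},G)$ for a suitable $G\in\Aut_\K(\K^1)$ if $r=1$, or more generally that it is $V^t(\om,1^p,F_f)$ with $\deg f=r$. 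Concretely, I would rescale the basis: set $f_p:=u_p$ and inductively define $f_{k}$ for $k>p$ by pushing $u_p$ forward under repeated application of $X$ composed with $Y^{-1}$ corrections using the relation $\al_+(e)\si(\al_-(e))=\si(t_{\op{wt}(u)})$ from condition (f), so that in the rescaled basis $X$ acts exactly as $\si(t)$ times a shift on the first $r-1$ weight-$\mf_0$ basis vectors and $Y$ acts as the plain shift backwards; the only nontrivial scalar then survives in the single return arrow closing the cycle, and defines an element $\xi\in\K^\times$.

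The key computation is then to identify this $\xi$: going once around the cycle via $X^{rp}$ applied to $u_{rp}$ (the weight-$\mf_0$ vertex), after dividing out the unavoidable factor $\si(t)\si^2(t)\cdots$ coming from the $\la(e)=1$ labels via condition (f), we land back on a scalar multiple $\xi u_{rp}$; by condition (f) the product of all the $\al_+(e)$'s around the cycle is $\prod_{k}\si^{k+1}(t_{\op{wt}(u_k)})$ times $\prod_k \si(\al_-(e_k))^{-1}$, and collecting these one finds the composite return map on the $r$-dimensional space $V^t(\om,\Ga)_{\mf_0}$ is conjugate to the companion matrix of $x^r-\xi$ for a single scalar $\xi\in\K^\times$ (not an arbitrary degree-$r$ polynomial, precisely because there is only \emph{one} cycle, hence only one independent scaling degree of freedom once (f) is imposed). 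Thus $V^t(\om,\Ga)\cong V^t(\om,1^p,F_{x^r-\xi})=V^t(\om,1^p,x^r-\xi)$, with $d:=r$.

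For the converse I would simply run the construction backwards: given $V^t(\om,1^p,x^d-\xi)$, build the cycle graph $\Ga$ on $dp$ vertices with all labels $1$, with weights cycling through $\om$ as above, and choose scaling factors $\al_\pm(e)$ satisfying (e) and (f) — possible since $\K$ has enough elements (any factorization $\al_+(e)\si(\al_-(e))=\si(t_{\op{wt}(u)})$ can be solved by picking $\al_-(e)$ arbitrary nonzero and solving for $\al_+(e)$) — and with the single return arrow carrying an extra factor encoding $\xi$. Checking $V^t(\om,\Ga)\cong V^t(\om,1^p,x^d-\xi)$ is then the same identification as above read in reverse.

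\textbf{Main obstacle.} The bookkeeping in the rescaling step is where care is needed: one must verify that the factors of $\si^j(t_\Fm)$ forced by condition (f) assemble exactly into the $\si(t)$-coefficients appearing in Definition \ref{def:finorb-circle} for the word $1^{rp}$, and that after absorbing all of them the residual cocycle around the cycle is a single scalar rather than a general automorphism — equivalently, that the ``gauge freedom'' in choosing the $\al_\pm(e)$ subject to (f) acts transitively on everything except one $\K^\times$-parameter. This is a routine but slightly delicate cohomological-type computation over the cyclic group $\Z/rp$; once it is done the identification with $x^d-\xi$ (as opposed to an arbitrary polynomial, which is what a \emph{general} $F$ would give) is immediate and is the conceptual content of the lemma.
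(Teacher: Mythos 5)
Your proposal is correct and takes essentially the same approach as the paper's (much terser) proof: the paper also identifies $\xi$ by the return map around the cycle, writing $Y^{-n}u=\xi u$ where $n=dp$ is the number of vertices, and then asserts the resulting isomorphism. One small inaccuracy in your exploratory middle paragraph: the remark that ``the superscript $p$ in $V^t(\om,1^p,F)$ is a slight abuse, it means $w=1^{rp}$'' is wrong --- in Definition \ref{def:finorb-circle} the word really is $w=1^p$ of length $p$, and the multiplicity $r$ of each weight comes from the size $d\times d$ of the matrix $F$, so the correct identification is $d=r$, which is where you in fact land.
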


\begin{proof}
By the condition on the weight function, if $B^t_\om=\emptyset$ then the graph has neither a sink nor a source. Thus $\Ga$ is a cycle. This means that $\op{wt}$ is surjective. Pick $u\in S$, $\op{wt}(u)=\si\big(\Fm_0\big)$, where $\Fm_0\in\om$ is the distinguished point in the orbit. Let $n$ be the number of vertices in $\Ga$. Then, necessarily, $n=dp$ for some $d\in\Z_{>0}$.
Define $\xi\in \K$ by the relation $Y^{-n}u=\xi u$.
Then $V^t(\om,\Ga)\cong V^t(\om,1^p,x^d-\xi)$.
\end{proof}

\begin{Lemma}\label{lem:graph-modules-with-breaks}
Suppose $\om$ has at least one break.
\begin{enumerate}[{\rm (i)}]
\item If $\Ga$ consists of a single path, then $V^t(\om,\Ga)$ is isomorphic to an indecomposable weight module of the kind $V^t(\om,i,w)$. Conversely, any indecomposable weight module of that kind occurs this way.
\item If $\Ga$ consists of a single cycle, then $V^t(\om,\Ga)$ is isomorphic to a weight module of the kind $V^t(\om,w,f)$ where $f(x)=x^d-\xi$ for some $d\in\Z_{>0}$ and $0\neq \xi\in\K$ and $w\in\mathbf{D}_\om^t$. Conversely, any such weight module   occurs this way.
\end{enumerate}
\end{Lemma}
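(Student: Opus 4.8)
\emph{Approach.} The plan is to prove (i) and (ii) together, since both reduce to normalizing the scaling factors of a $t$-adapted graph with a single connected component: after normalization a path matches a module of Definition \ref{def:type1mod} on the nose, while a cycle closes up and leaves one residual nonzero scalar, which becomes the $\xi$ in the statement. First I would set up coordinates on a connected $\Ga$. If $\Ga$ is a path, condition (b) forces its source to have weight $\si^{i+1}(\mf_0)$ with $\si^i(\mf_0)\in B^t_\om$ and condition (c) forces its sink to have weight $\si^{i+\ell+1}(\mf_0)\in B^t_\om$; I list the vertices in order as $u_{i+1},\dots,u_{i+\ell+1}$ with $\op{wt}(u_k)=\si^k(\mf_0)$ and read off the label word $W=\la(e_{i+1})\cdots\la(e_{i+\ell})$. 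If $\Ga$ is a cycle, condition (a) forces its length to be $rp$ for some $r\ge 1$, and after choosing a base vertex I take the vertices to be $u_1,\dots,u_{rp}$ cyclically with $\op{wt}(u_k)=\si^k(\mf_0)$ and $W=\la(e_1)\cdots\la(e_{rp})$. In both cases $W$ has no letter $0$ and, by condition (d), its $k$-th letter is $1$ exactly when $\si^k(\mf_0)\notin B^t_\om$; hence $(W,i+1)\in\mathbf{D}^t_\om$, resp.\ $(W,1)\in\mathbf{D}^t_\om$.

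\emph{Normalization.} Since $Xr=\si(r)X$ and $Yr=\si^{-1}(r)Y$, replacing a basis vector $u_k$ by $c_ku_k$ with $c_k\in\K_{\op{wt}(u_k)}\setminus\{0\}$ multiplies the scaling factor on each incident edge by an explicit $\si$-twisted ratio of the relevant $c$'s, and condition (f) is invariant under such rescalings. Fixing $c$ to be $1$ at the first vertex and solving recursively along $\Ga$, I would bring every edge into the standard form used in Definitions \ref{def:type1mod} and \ref{def:finorb-circle}: on a label-$1$ edge out of $u_k$ take $\al_+=\si(t_{\op{wt}(u_k)})$, which by (f) forces $\al_-=1$; on a label-$x$ edge take $\al_+=1$ (the $Y$-action there being zero, $\al_-$ irrelevant); on a label-$y$ edge take $\al_-=1$. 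For a path this normalizes all edges. For a cycle all edges except the closing one $(u_{rp},u_1)$ are normalized, and the residual scaling datum on that edge --- a product of nonzero scalars, and, since $rp$ is divisible by $p$ and $\si$ has order $p$, a genuinely $\K$-linear monodromy --- is recorded by a single $\xi\in\K\setminus\{0\}$.

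\emph{Conclusion.} For (i): the normalized path graph is isomorphic, via $u_k\mapsto e^t_k$, to $V^t(\om,i,W)$, which is indecomposable because $W$ contains no letter $0$ (by the indecomposability criterion in the remark after Definition \ref{def:type1mod}). Conversely, an indecomposable $V^t(\om,i,w)$ has $w$ without the letter $0$, and taking $\Ga$ to be the path on $u_{i+1},\dots,u_{i+\ell+1}$ with $\op{wt}(u_k)=\si^k(\mf_0)$, labels $w$, and standard scaling factors gives a $t$-adapted graph (conditions (a)--(f) hold because $\si^i(\mf_0),\si^{i+\ell+1}(\mf_0)\in B^t_\om$ and $(w,i+1)\in\mathbf{D}^t_\om$) with $V^t(\om,\Ga)\cong V^t(\om,i,w)$. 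For (ii): the normalized cycle is isomorphic, via $u_k\mapsto e^t_{k,1}$, to $V^t(\om,W,F)$ with $F=[\xi]$, i.e.\ to $V^t(\om,W,x-\xi)$ --- the asserted form with $d=1$ and $w=W\in\mathbf{D}^t_\om$. Conversely, given $V^t(\om,w,x^d-\xi)$ with $w\in\mathbf{D}^t_\om$ of length $rp$ and $\xi\in\K\setminus\{0\}$, the companion matrix $F_{x^d-\xi}$ is the weighted cyclic shift $e_s\mapsto e_{s+1}$ ($s<d$), $e_d\mapsto\xi e_1$, so reading the basis $e^t_{k,s}$ of Definition \ref{def:finorb-circle} as the single cyclic sequence $e^t_{1,1},\dots,e^t_{rp,1},e^t_{1,2},\dots,e^t_{rp,d}$ presents the module as $V^t(\om,\Ga)$ for the length-$rpd$ cycle with label word $w$ repeated $d$ times and standard scaling factors except for $\xi$ absorbed on the closing edge (conditions (b), (c) being vacuous, and (f) on the closing edge being met by the $\si(t)$-factor placed there). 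Finally, to account for the general exponent $d$ in the statement one notes that if $W=v^m$ with $v$ its primitive $p$-power root then $V^t(\om,W,x-\xi)\cong V^t(\om,v,x^m-\xi)$ by direct comparison of Definition \ref{def:finorb-circle}.

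\emph{Main obstacle.} I expect the only genuine work to be the $\si$-twist bookkeeping in the normalization: checking that the recursion for the $c_k$ is solvable in the correct residue field at each edge, that condition (f) is preserved step by step, and --- for a cycle --- that the closing datum is honestly one nonzero scalar in $\K$, in the normalization that makes $u_k\mapsto e^t_{k,1}$ an isomorphism onto $V^t(\om,W,x-\xi)$ rather than onto a module built from a $\si$-conjugate or inverse of $\xi$. This is exactly where the assumptions that $\si$ has order $p$ and that cycle lengths are multiples of $p$ are used. Once the twists are under control, matching the $X$- and $Y$-actions against Definitions \ref{def:type1mod} and \ref{def:finorb-circle} and verifying conditions (a)--(f) in the converse constructions are routine.
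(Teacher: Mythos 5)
Your proposal is correct and follows essentially the same route as the paper: read off the word from the edge labels (which matches the paper's $\hat w$), check via the defining conditions that the indexed word lies in $\mathbf{D}^t_\om$ with the right break positions at the endpoints, and identify the module. The one place you go into more detail than the paper is the isomorphism itself: you realize it explicitly by rescaling basis vectors $u_k\mapsto c_k u_k$ to normalize the scaling factors edge by edge, leaving (in the cycle case) a residual scalar $\xi$ on the closing edge, whereas the paper simply records $\xi$ as the eigenvalue of the monodromy $Z_1\cdots Z_{|S|}$; these are the same computation in different packaging. Your final remark reconciling $V^t(\om,W,x-\xi)$ with the shape $V^t(\om,v,x^m-\xi)$ when $W=v^m$ is a correct (and not strictly necessary for the forward direction) observation that accounts for why the statement allows arbitrary $d$, and your converse construction of the length-$rpd$ cycle for $V^t(\om,w,x^d-\xi)$ matches the paper's construction in spirit.
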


\begin{proof}
(i) Let $u_0\in S$ be the unique source of the directed path $\Ga$. By the conditions on the weight function, we therefore have $\op{wt}(u_0)=\si(\Fm_i)$ for some $i$, where $\Fm_i=\si^i(\Fm_0)$.
The direction of the edges in $\Ga$ give a total order on the set of vertices, and hence on the edges by $(u,v)<(u',v')$ if and only if $u<u'$. Let $(e_1<e_2<\cdots<e_n)$ be the ordered list of edges $e\in E$ for which $\la(e)\in \{x,y\}$. Define a word $w=w_1w_2\cdots w_n$ by $w_i=\la(e_i)$. Then $V^t(\om,\Ga)\cong V^t(\om,i,\hat{w})$, where $\hat{w}$ is obtained from $w$ as described in Remark \ref{rem:hat-w}.

Conversely, define $\Ga=\Ga(S,E,\op{wt},\la,\al)$ as follows.
Take the vertex set $S$ to be the basis for $V^t(\om,i,w)$ consisting of symbols $e^t_k$ as in the definition of $V^t(\om,i,w)$.
Define the edge set $E$ to be the set of pairs $(u,v)\in S^2$ where $Xu$ is a nonzero multiple of $v$ or $Yv$ is a nonzero multiple of $u$.
Define the weight function by $\op{wt}(e_k)=\si^k(\mf_0)$.
Let $e=(u,v)\in E$ and define 
\[
\la(e)=\begin{cases}
1,& Xu\neq 0,\, Yv\neq 0,\\
x,& Xu\neq 0,\, Yv=0,\\
y,& Xu=0,\, Yv\neq 0.
\end{cases}
\]
Define 
\[
\al_\pm(e)=
\begin{cases}
\si(t_{\op{wt}(u)}),&\la(e)=1, \text{ and } \pm=+,\\
1,&\text{otherwise}.
\end{cases}
\]
Then the identity map on $S$ extends to an isomorphism of $A(t)$-modules $V^t(\om,i,w)\cong V^t(\Ga,\om)$.

(ii) The proof is similar to part (i) and to Lemma \ref{lem:graph-modules-no-breaks}, but $\xi$ is the eigenvalue of $Z_1Z_2\cdots Z_{|S|}$ where each $Z_i$ is $Y^{-1}$ if defined and $X$ otherwise.
\end{proof}

\begin{Corollary} \phantom{text}
\begin{enumerate}[{\rm (i)}]
\item If $\om$ is an orbit without breaks, then the class of weight modules $V^t(\omega,\Ga)$ is the same as the class of direct sums of modules each of which is a module without breaks of the form $V^t(\om,1^p,x^d-\xi)$.
\item If $\om$ is an orbit with at least one break, then the class of weight modules $V^t(\om,\Ga)$ is the same as the class of direct sums of modules each of which is either a module of the kind $V^t(\om,i,w)$, or a module of the form $V^t(\om,w,x^d-\xi)$.
\end{enumerate}
\end{Corollary}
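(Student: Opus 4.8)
The plan is to deduce the Corollary directly from the three preceding lemmas by decomposing a $t$-adapted graph into its connected components, treating the two inclusions between classes of modules separately.

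For the forward inclusion I would start with an arbitrary $t$-adapted graph $\Ga$ and write it as a disjoint union $\Ga=\Ga_1\sqcup\cdots\sqcup\Ga_m$ of its connected components, each of which by hypothesis is a directed path or a directed cycle and inherits a $t$-adapted structure by restricting $\op{wt}$, $\la$ and $\al$. The disjoint-union Lemma then gives $V^t(\om,\Ga)\cong\bigoplus_{c=1}^m V^t(\om,\Ga_c)$. In case (i), since $B^t_\om=\emptyset$, conditions (b) and (c) in the definition of $t$-adapted rule out sources and sinks, so every $\Ga_c$ is a cycle, and Lemma \ref{lem:graph-modules-no-breaks} identifies each summand with a module $V^t(\om,1^p,x^{d_c}-\xi_c)$. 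In case (ii), each $\Ga_c$ is either a path or a cycle, and Lemma \ref{lem:graph-modules-with-breaks}(i)--(ii) identifies the corresponding summand with a module $V^t(\om,i_c,w_c)$ or $V^t(\om,w_c,x^{d_c}-\xi_c)$ respectively. Either way $V^t(\om,\Ga)$ lies in the asserted class.

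For the reverse inclusion I would take a direct sum $M=\bigoplus_{j=1}^m M_j$ in which each $M_j$ is of the stated form, invoke the ``conversely'' halves of Lemmas \ref{lem:graph-modules-no-breaks} and \ref{lem:graph-modules-with-breaks} to realize each $M_j$ as $V^t(\om,\Ga_j)$ for a single-component $t$-adapted graph $\Ga_j$, and then set $\Ga=\Ga_1\sqcup\cdots\sqcup\Ga_m$; the disjoint-union Lemma again gives $V^t(\om,\Ga)\cong\bigoplus_j M_j=M$, so $M$ is a graph module. If a word $w$ occurring in some summand $M_j$ contains a letter $0$, I would first apply Remark \ref{rem:decomp-type1} (respectively Lemma \ref{lemma:fakecirc}) to split $M_j$ into a direct sum of modules whose words are free of the letter $0$, a case already covered by the lemmas.

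I do not expect a genuine obstacle here: the argument is bookkeeping layered on top of the three lemmas. The only point that needs a moment's attention is the observation in case (i) that the absence of breaks forces every component of a $t$-adapted graph to be a cycle (no vertex can be a source or a sink), which is precisely the opening step in the proof of Lemma \ref{lem:graph-modules-no-breaks} and should simply be recalled.
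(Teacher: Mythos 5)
Your argument is correct and is exactly what the paper implicitly intends: the Corollary is stated without an explicit proof, and the natural way to obtain it is precisely to decompose a $t$-adapted graph into connected components, apply the disjoint-union lemma, and then invoke Lemma \ref{lem:graph-modules-no-breaks} (resp.\ Lemma \ref{lem:graph-modules-with-breaks}) and its ``conversely'' half to each component, noting in case (i) that conditions (b) and (c) in the definition of a $t$-adapted graph exclude sources and sinks when $B^t_\om=\emptyset$. Your caveat about summands whose word contains a letter $0$ is a genuine (if small) point that the lemmas' converse halves do not directly cover, and splitting first via Remark \ref{rem:decomp-type1} or Lemma \ref{lemma:fakecirc} resolves it correctly.
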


\subsection{Tensor products of graph modules}

Given two such graphs $\Ga=(S,E,\op{wt},\la,\al)$, $\Ga'=(S',E',\op{wt}',\la',\al')$ we define their \emph{tensor product} $\Ga\otimes\Ga'$ as follows. The vertex set is 
\[S''=\{v\otimes v'\mid v\in S,\, v'\in S',\, \op{wt}(v)=\op{wt}'(v')\}.\] The weight function $\op{wt}''$ is defined by $\op{wt}''(v\otimes v')=\op{wt}(v)=\op{wt}'(v')$. The edge set is 
\[E''=\big\{(u\otimes u', v\otimes v')\in (S'')^2\mid (u,v)\in E,\, (u',v')\in E',\, \{\la(u,v),\la'(u',v')\}\neq \{x,y\}\big\}.\]
The edge labeling is defined by
\[\la''\big((u\otimes u', v\otimes v')\big)=\la(u,v)\la'(u',v'),\]
where the product is computed in the following way (cf. Definition \ref{def:monoid}): \[1x=x1=x,\quad 1y=y1=y,\quad 1^2=1,\quad x^2=x,\quad y^2=y.\]
The products $xy$ and $yx$ never occur by definition of the edge set $E''$. Put $t''=t\cdot t'$. One checks that if $\la$ is $t$-adapted and $\la'$ is $t'$-adapted then $\la''$ is $t''$-adapted. Lastly, we define the scaling factor tuple $\al''$ by
\[
\al''_\pm(e'')=\al_\pm(e)\al'_\pm(e'),
\]
where $e=(u,v)$, $e'=(u',v')$, $e''=(u'',v'')=(u\otimes u',v\otimes v')$.
We check that $\al''$ is $t''$-adapted. 
If $\la''(e'')=1$ then necessarily $\la(e)=\la'(e')=1$ and hence
\begin{align*}
\al''_+(e'')\si\big(\al''_-(e'')\big)&=\al_+(e)\al'_+(e')\si\big(\al_-(e)\al'_-(e')\big)\\
&=\Big(\al_+(e)\si\big(\al_-(e)\big)\Big)\Big(\al'_+(e')\si\big(\al'_-(e')\big)\Big)\\
&=\si(t_{\op{wt}(u)})\si(t'_{\op{wt}'(u')})\\
&=\si(t''_{\op{wt}''(u'')}).
\end{align*}
This shows condition (f) holds. The other conditions are straightforward to verify.

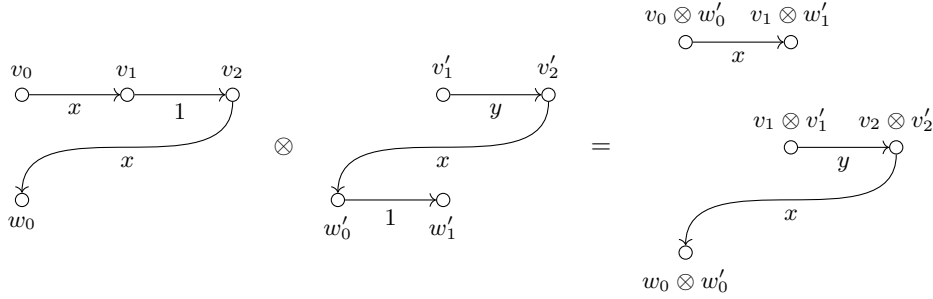
\begin{figure}[t]
\[
\begin{tikzpicture}
[scale=1.4,baseline=-.5ex,vertex/.style={draw,circle,fill=white,minimum size=5pt, inner sep=0pt}]
\node[vertex] (A) at (0,.5) [label=above:\small{$v_0$}] {};
\node[vertex] (B) at (1,.5) [label=above:\small{$v_1$}] {};
\node[vertex] (C) at (2,.5) [label=above:\small{$v_2$}] {};
\node[vertex] (D) at (0,-.5) [label=below:\small{$w_0$}] {};
\draw[->] (A) -- (B) node[midway, below] {\small $x$};
\draw[->] (B) -- (C) node[midway, below] {\small $1$};
\draw [->,out=270, in=90] (C) to node[midway, below] {\small $x$} (D);
\end{tikzpicture}
\;\;\otimes\;\;
\begin{tikzpicture}
[scale=1.4,baseline=-.5ex,vertex/.style={draw,circle,fill=white,minimum size=5pt, inner sep=0pt}]
\node[vertex] (B) at (1,.5) [label=above:\small{$v_1'$}] {};
\node[vertex] (C) at (2,.5) [label=above:\small{$v_2'$}] {};
\node[vertex] (D) at (0,-.5) [label=below:\small{$w_0'$}] {};
\node[vertex] (E) at (1,-.5) [label=below:\small{$w_1'$}] {};
\draw[->] (B) -- (C) node[midway, below] {\small{$y$}};
\draw[->,out=270, in=90] (C) to node[midway, below] {\small $x$} (D);
\draw[->] (D) -- (E) node[midway, below] {\small $1$};
\end{tikzpicture}
\;\;=\;\;
\begin{tikzpicture}
[scale=1.4,baseline=-.5ex,vertex/.style={draw,circle,fill=white,minimum size=5pt, inner sep=0pt}]
\node (X) at (0,-1) {};
\node[vertex] (A) at (0,1) [label=above:\small{$v_0\otimes w_0'$}] {};
\node[vertex] (B) at (1,1) [label=above:\small{$v_1\otimes w_1'$}] {};
\node[vertex] (C) at (1,0) [label=above:\small{$v_1\otimes v_1'$}] {};
\node[vertex] (D) at (2,0) [label=above:\small{$v_2\otimes v_2'$}] {};
\node[vertex] (E) at (0,-1) [label=below:\small{$w_0\otimes w_0'$}] {};
\draw[->] (A) -- (B) node[midway, below] {\small $x$};
\draw[->] (C) -- (D) node[midway, below] {\small $y$};
\draw[->,out=270,in=90] (D) to node[pos=0.5, below] {\small $x$} (E);
\end{tikzpicture}
\]
\caption{A tensor product $V\otimes V'$ of two graph modules. Here $\om=\{\Fm_0,\Fm_1,\Fm_2\}$; $\op{wt}(x_i)=\Fm_i$ for $x\in\{v,w,v',w'\}$, $i=0,1,2$; $B_\om^t=\{\Fm_0,\Fm_2\}, B_\om^{t'}=\{\Fm_1,\Fm_2\}$, $B_\om^{tt'}=\{\Fm_0,\Fm_1,\Fm_2\}$. The scaling factors $\alpha_\pm$ are not represented here.}
\label{fig:tensor-product-of-graph-modules}
\end{figure}

The following result, an example of which is illustrated in Figure \ref{fig:tensor-product-of-graph-modules}, is the main theorem of this section.

\begin{Theorem}
\begin{equation}
V^t(\om,\Ga)\otimes_R V^{t'}(\om,\Ga')\cong V^{tt'}(\om,\Ga\otimes\Ga').
\end{equation}
\end{Theorem}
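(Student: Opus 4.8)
The plan is to exhibit an explicit isomorphism on the underlying $R$-modules and then check compatibility with the $A(tt')$-action weight space by weight space. First I would observe that both sides are weight modules with the same support, so by Lemma~\ref{lemma:weightmod} it suffices to identify them over each $\Fm\in\om$. On the left, $\big(V^t(\om,\Ga)\otimes_R V^{t'}(\om,\Ga')\big)_\Fm = V^t(\om,\Ga)_\Fm\otimes_{\K_\Fm} V^{t'}(\om,\Ga')_\Fm$, which has basis $\{v\otimes v'\mid v\in S,\, v'\in S',\, \op{wt}(v)=\op{wt}'(v')=\Fm\}$; on the right, $V^{tt'}(\om,\Ga\otimes\Ga')_\Fm$ has basis $(\op{wt}'')^{-1}(\{\Fm\})$, which is exactly the same set by the definition of $S''$. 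So I would take $\Phi$ to be the $R$-linear map sending the tensor basis element $v\otimes v'$ to the graph-module basis vector $v\otimes v'\in S''$; this is manifestly an $R$-module isomorphism respecting the weight decomposition.

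Next I would verify that $\Phi$ intertwines the $X$- and $Y$-actions. For $u\in S$, $u'\in S'$ with $\op{wt}(u)=\op{wt}'(u')$, the action on the tensor product is diagonal via the coproduct $\Delta_{t,t'}$ (Theorem~\ref{thm:coprod}(a)): $X(u\otimes u') = Xu\otimes Xu'$. Using the formulas in the definition of $V^t(\om,\Ga)$, $Xu = \al_+(e)v$ when $e=(u,v)\in E$ with $\la(e)\neq y$, and $Xu=0$ otherwise (similarly for $u'$). So $X(u\otimes u')$ is nonzero precisely when there exist edges $e=(u,v)\in E$ and $e'=(u',v')\in E'$ with $\la(e)\neq y$, $\la'(e')\neq y$; but this is exactly the condition for $(u\otimes u',v\otimes v')$ to be an edge of $E''$ with $\la''\neq y$ — the forbidden pair $\{\la,\la'\}=\{x,y\}$ is excluded from $E''$ by fiat, and a pair like $(y,\cdot)$ or $(\cdot,y)$ with the other letter forcing $\la''=y$ is precisely the case where one of $Xu$, $Xu'$ vanishes. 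In that nonzero case, $X(u\otimes u') = \al_+(e)\al'_+(e') (v\otimes v') = \al''_+(e'')(v\otimes v'')$, matching the action of $X$ on $V^{tt'}(\om,\Ga\otimes\Ga')$ by the definition of $\al''$. The argument for $Y$ is symmetric, using $\la(e)\neq x$ in place of $\la(e)\neq y$ and $\al_-$ in place of $\al_+$.

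Finally I would note that the $R$-action on both sides is by $r\cdot(u\otimes u') = (r+\op{wt}''(u\otimes u'))(u\otimes u')$, which agrees trivially, and that the relations $YX=tt'$, $XY=\si(tt')$ are automatically respected once $X$, $Y$, and $R$ act correctly (they hold in both modules by construction, as noted in the text just after the definition of $V^t(\om,\Ga)$). Hence $\Phi$ is an isomorphism of $A(tt')$-modules. I do not expect a genuine obstacle here; the only point requiring a little care is the bookkeeping of cases in the second paragraph, namely checking that the combinatorial rule ``$\{\la(e),\la'(e')\}\neq\{x,y\}$ for edges of $E''$, with $\la''=\la\cdot\la'$'' matches up exactly with the vanishing/non-vanishing behavior of the diagonal $X$- and $Y$-actions on tensors — i.e.\ that the monoid $\{1,x,y\}$ (without $0$) together with the edge-deletion convention faithfully models the operator $\{0,1,x,y\}$-monoid of Definition~\ref{def:monoid}. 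This is routine but is the heart of why the theorem is true, and I would present it as the single displayed case analysis above rather than belaboring each sub-case.
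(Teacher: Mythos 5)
Your proposal is correct and takes essentially the same approach as the paper: an explicit basis-preserving $R$-module isomorphism (you write it from the tensor product to the graph module, the paper writes it in the other direction), followed by a case analysis showing the $X$- and $Y$-actions agree because the monoid rule $\la''=\la\cdot\la'$ together with the exclusion $\{\la,\la'\}\neq\{x,y\}$ from $E''$ exactly reproduces the vanishing pattern of $Xu\otimes Xu'$ and $Yu\otimes Yu'$. Your added appeal to Lemma~\ref{lemma:weightmod} and your remark that the GWA relations are automatically inherited are fine but not load-bearing; the paper handles the same points implicitly.
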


\begin{proof}
Define a map $\varphi:V^{tt'}(\om,\Ga\otimes\Ga')\to V^t(\om,\Ga)\otimes_R V^{t'}(\om,\Ga')$ by $\varphi(v\otimes v')=v\otimes v'$ for all $v\otimes v'$ from the vertex set $S''$ of $\Ga\otimes\Ga'$. Since the weight space of weight $\Fm$ in $V^t(\om,\Ga)\otimes_R V^{t'}(\om,\Ga')$ is equal to 
$V^t(\om,\Ga)_\Fm\otimes_{\K_\Fm} V^{t'}(\om,\Ga')_\Fm$, the map $\varphi$ is an $R$-module isomorphism. Let $u''=u\otimes u'\in S''$. 
By the definition of tensor product of weight modules over generalized Weyl algebras, we have 
\[X\varphi(u'')=(Xu)\otimes (Xu').\]
Suppose first that $u\in S, u'\in S'$ such that
$e=(u,v)\in E,\, \la(e)\neq y$ and $e'=(u',v')\in E',\, \la'(e')\neq y$. Then we get
\[(Xu)\otimes (Xu')=\al_+(e)v\otimes \al'_+(e')v'=\al_+(e)\al'_+(e') v\otimes v' = \al''_+(e) v\otimes v'=\varphi(X u'').\]
If either $\la(e)=y$ or $\la'(e')=y$ both sides are zero. Similarly, $Y\varphi(u'')=\varphi(Yu'')$. Thus $\varphi$ is an $A(t'')$-module isomorphism.
\end{proof}

%

\bibliographystyle{alphaurl}

\end{document}